\documentclass[a4paper]{article}
\usepackage{mathrsfs}
\usepackage{amssymb}
\usepackage{amsmath}
\usepackage{pict2e}
\usepackage{hyperref}
\usepackage{amsmath}
\usepackage{amsfonts}
\usepackage{amsthm}
\usepackage{graphicx}
\usepackage[small]{caption}
\usepackage{subfigure}

\def\0{\emptyset}

\newcommand{\ch}{\text{ch}}

\begin{document}
\newtheorem{claim}{Claim}[section]
\newtheorem{theorem}{Theorem}[section]
\newtheorem{corollary}[theorem]{Corollary}
\newtheorem{definition}[theorem]{Definition}
\newtheorem{conjecture}[theorem]{Conjecture}
\newtheorem{problem}[theorem]{Problem}
\newtheorem{lemma}[theorem]{Lemma}
\newtheorem{proposition}[theorem]{Proposition}
\newcommand{\remark}{\medskip\par\noindent {\bf Remark.~~}}
\newcommand{\pp}{{\it p.}}
\newcommand{\de}{\em}

\title{Planar graphs without cycles of length $4$ or $5$ are $(11:3)$-colorable}
\author{Zden\v{e}k Dvo\v{r}\'{a}k\thanks{Computer Science Institute (CSI) of Charles University,
           Malostransk\'{e} n\'am\v{e}st\'{\i} 25, 118 00 Prague,
	              Czech Republic. E-mail: \protect\href{mailto:rakdver@iuuk.mff.cuni.cz}{\protect\nolinkurl{rakdver@iuuk.mff.cuni.cz}}.}\and Xiaolan Hu\thanks{School of Mathematics and Statistics $\&$ Hubei Key Laboratory of Mathematical Sciences, Central China Normal University,
Wuhan 430079, PR China.}
}
\date{}
\maketitle

\begin{abstract}
A graph $G$ is \emph{$(a:b)$-colorable} if there exists an assignment of
$b$-element subsets of $\{1,\ldots,a\}$ to vertices of $G$ such that
sets assigned to adjacent vertices are disjoint.
We show that every planar graph without cycles of length $4$ or $5$ is
$(11:3)$-colorable, a weakening of recently disproved Steinberg's conjecture.
In particular, each such graph with $n$ vertices has an independent set of size at least $\frac{3}{11}n$.

\vskip 0.2cm
\noindent{\bf Keywords:} planar graph; coloring; independence ratio
\end{abstract}

\section{Introduction}

A function that assigns sets to all vertices of a graph is a \emph{set coloring} if the sets
assigned to adjacent vertices are disjoint.
For positive integers $a$ and $b\le a$, an {\em $(a:b)$-coloring} of a graph $G$ is a set coloring
with range $\binom {\{1,\ldots, a\}}{b}$, i.e., a set coloring that to each vertex assigns a $b$-element
subset of $\{1,\ldots, a\}$.
The concept of $(a:b)$-coloring is a generalization of the conventional vertex coloring. In fact,
an $(a:1)$-coloring is exactly an ordinary proper $a$-coloring.

The {\em fractional chromatic number} of $G$, denoted by $\chi_f(G)$, is the infimum of the
fractions $a/b$ such that $G$ admits an $(a:b)$-coloring. Note that $\chi_f(G)\leq \chi(G)$ for any graph $G$,
where $\chi(G)$ is the chromatic number of $G$.
The fractional coloring was first introduced in 1973 \cite{planfr5} to seek for a proof of the Four Color Problem.
Since then, it has been the focus of many intensive research efforts, see \cite{ScheinermanUllman2011}.
In particular, fractional coloring of planar graphs without cycles of certain lengths is widely studied.
Pirnazar and Ullman~\cite{PU02} showed that the fractional chromatic number of a planar
graph with girth at least $8k-4$ is at most $2+\frac{1}{k}$. Dvo\v{r}\'{a}k {\em et al.}~\cite{DSV08}
showed that every planar graph of odd-girth at least 9 is $(5:2)$-colorable. Recently, Dvo\v{r}\'{a}k {\em et al.} \cite{frpltr} showed
that every planar triangle-free graph on $n$ vertices is $(9n:3n+1)$-colorable, and thus it has fractional chromatic number
at most $3-\frac{3}{3n+1}$.

Well-known Steinberg's Conjecture asserts that every planar graph without cycles of length 4 or 5 is 3-colorable.
Recently, Steinberg's conjecture was disproved \cite{CohenAddad2016}. This conjecture, though disproved, had motivated
a lot of research, see \cite{borsurvey}. Since $\chi_f(G)\leq \chi(G)$ for any graph $G$, it is natural to ask whether there exists
a constant $c<4$ such that $x_f(G)\leq c$ for all planar graphs without cycles of length 4 or 5.
In this paper, we confirm this is the case for $c=\frac{11}{3}$. In fact, we prove the following stronger theorem.

\begin{theorem}
\label{MT}
Every planar graph without cycles of length 4 or 5 is $(11:3)$-colorable, and thus its fractional chromatic number
is at most $\frac{11}{3}$.
\end{theorem}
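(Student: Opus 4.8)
The plan is to use the discharging method, the standard approach for problems about planar graphs with forbidden cycle lengths. I would argue by contradiction: let $G$ be a counterexample with the fewest vertices, so $G$ is a planar graph with no $4$- or $5$-cycles that is not $(11:3)$-colorable, but every proper subgraph is. The first phase is to establish structural properties of $G$ (the \emph{reducible configurations}): small separating structures, low-degree vertices in certain arrangements, and short paths between faces cannot appear, because each such configuration would allow us to delete or contract part of $G$, $(11:3)$-color the rest by minimality, and then extend the coloring greedily or by a small case analysis (using that each uncolored vertex of degree $d$ forbids at most $3d$ of the $11$ colors, so a vertex of degree $\le 3$ always extends, and more refined counting handles degree $4$ and $5$ vertices adjacent to enough already-colored or low-degree neighbors). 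Because $4$- and $5$-cycles are forbidden, faces of length $6$ and $7$ interact with triangles in a controlled way, and $3$-faces cannot share edges; this rigidity is what makes the reducible configuration list manageable.

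The second phase is the discharging argument. Assign to each vertex $v$ the charge $\deg(v) - 4$ and to each face $f$ the charge $|f| - 4$; by Euler's formula the total charge is $-8 < 0$. Then I would design discharging rules that move charge from faces of length $\ge 7$ (which have positive initial charge) and from high-degree vertices toward triangles and $2$-vertices or $3$-vertices (which have negative charge), typically sending a fixed amount like $\tfrac13$ or $\tfrac12$ across each incident edge or through each face, and verify that after redischarging every vertex and every face has nonnegative charge. The reducible configurations from phase one are exactly what is needed here: for instance, if some $3$-vertex still had negative charge after discharging, its neighborhood would be forced into a configuration we already proved reducible, a contradiction. Summing the nonnegative final charges contradicts the total being $-8$, so no counterexample exists. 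The fractional chromatic number bound $\chi_f(G) \le 11/3$ and the independent-set corollary then follow immediately, the latter because one of the $11$ color classes in an $(11:3)$-coloring must have size $\ge \tfrac{3}{11}n$ by averaging (each vertex lies in $3$ classes).

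I expect the main obstacle to be controlling the interaction between triangles and short ($6$- and $7$-) faces while keeping the discharging rules simple enough to verify. Since Steinberg-type configurations are genuinely intricate — one cannot merely forbid adjacent triangles, one must track how triangles cluster around $6$-faces and how $2$- and $3$-vertices are distributed — the reducibility proofs for the degree-$4$ and degree-$5$ cases will require delicate colour-counting arguments, possibly invoking a Hall-type or greedy argument on the list of available colour-triples, and the discharging rules will likely need auxiliary notions (e.g., "poor" vs. "rich" faces, or charge sent only to certain triangles) to balance. A secondary difficulty is making sure the extension arguments actually use the $(11:3)$ slack correctly: a degree-$5$ vertex with all neighbours coloured forbids up to $15 > 11$ colours, so reducibility for such vertices must exploit overlaps among the forbidden sets, which is where the combinatorics of $3$-element subsets of an $11$-set becomes essential and where I would expect to spend the most effort.
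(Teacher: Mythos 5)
Your proposal is a strategy outline rather than a proof: the entire content of the theorem lies in the two items you defer --- the explicit list of reducible configurations with their colorability proofs, and the concrete discharging rules with charge verification --- and your sketch contains a sign that the difficulty is underestimated. You write that a vertex of degree $d$ forbids at most $3d$ of the $11$ colors, ``so a vertex of degree $\le 3$ always extends.'' That is false: a degree-$3$ vertex with all neighbors colored can have as few as $11-9=2$ available colors, one short of the $3$ it needs. Only degree $\le 2$ extends greedily, which is exactly why a minimal counterexample is only guaranteed minimum internal degree $3$, and why the real work in the paper consists of reducibility lemmas for paths, $6$-cycles, and claw-like configurations built from degree-$3$ and degree-$4$ vertices (Lemmas~\ref{3-3-3}--\ref{claw543}, applied through Lemmas~\ref{33-path}--\ref{5-vertex43}), proved via a non-uniform $(L:f)$-coloring bookkeeping that exploits how adjacent uncolored vertices share their slack. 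None of this machinery is indicated in your sketch beyond an acknowledgment that ``delicate colour-counting arguments'' will be needed.

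A second concrete gap is the global setup of the induction. To make minimality usable one must be able to cut along cut vertices and separating triangles, so that the minimal counterexample is $2$-connected and every triangle bounds a face --- structural facts used throughout the analysis of $6$- and $7$-faces. The paper achieves this by proving a stronger statement, $(11:3)$-choosability with a precolored clique $Z$ (lists of size $3$ on $Z$, size $11$ elsewhere); the Euler-formula deficit (total charge $-12$ with charges $2\deg(v)-6$ and $|f|-6$) is then closed against $\sum_{z\in Z}(\deg(z)-6)$, not by making literally every charge nonnegative. Your plan fixes charges $\deg(v)-4$ and $|f|-4$, asks that all final charges be nonnegative with no boundary object absorbing the deficit, and never explains how separating triangles are eliminated from a minimal counterexample when the coloring of the outer part need not restrict consistently to the inner part; without the list/precolored-clique strengthening (or an equivalent device) that reduction step, and hence the discharging phase as you describe it, does not go through. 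So while your overall framework (minimal counterexample, reducible configurations, discharging) matches the paper's, the proposal as written has genuine gaps both in the reducibility phase and in the structure of the induction itself.
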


The \emph{independence number} $\alpha(G)$ of a graph $G$ is the size of a largest independent set in $G$.
The \emph{independence ratio} of $G$ is the quantity $\frac{\alpha(G)}{|V(G)|}$.
The famous Four Color Theorem \cite{AppHak1} implies that every planar graph has independence ratio at least $\frac{1}{4}$.
In 1976,  Albertson \cite{Albertson76} proved a weaker result that every planar graph has independence ratio at least $\frac{2}{9}$
without using the Four Color Theorem. In 2016, Cranston and Rabern \cite{CR16} improved this constant to $\frac{3}{13}$.
If $G$ is a triangle-free planar graph, a classical theorem of Gr\H{o}tzsch \cite{grotzsch1959} says that $G$ is 3-colorable,
and thus $G$ has independence ratio at least $\frac{1}{3}$.  This bound can be slightly improved---Steinberg and Tovey~\cite{SteinbergTovey1993}
proved that the independence ratio is at least $\frac{1}{3}+\frac{1}{3|V(G)|}$, and gave an infinite
family of planar triangle-free graphs for that this bound is tight.
Steinberg's Conjecture would imply that every planar graph without cycles of length 4 or 5 has independence ratio at least $\frac{1}{3}$,
and it is not known whether this weaker statement holds or not.
Since $\alpha(G)\geq \frac{|V(G)|}{\chi_f(G)}$ for any graph $G$,
we have the following corollary by Theorem~\ref{MT}.

\begin{corollary}\label{ratio}
Every planar graph without cycles of length 4 or 5 has independence ratio at least $\frac{3}{11}$.
\end{corollary}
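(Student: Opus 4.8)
The plan is to deduce the corollary directly from Theorem~\ref{MT}, using the standard passage from a fractional colouring to a large independent set. Let $G$ be a planar graph without cycles of length $4$ or $5$ and set $n=|V(G)|$. By Theorem~\ref{MT}, $G$ has an $(11:3)$-coloring $\varphi$, assigning to each vertex $v$ a set $\varphi(v)\in\binom{\{1,\ldots,11\}}{3}$ with $\varphi(u)\cap\varphi(v)=\emptyset$ whenever $uv\in E(G)$.

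First I would split the colouring into its colour classes: for each $i\in\{1,\ldots,11\}$ put $V_i=\{v\in V(G):i\in\varphi(v)\}$. Since adjacent vertices receive disjoint sets, no edge has both ends in the same $V_i$, so every $V_i$ is independent. Next I would count incidences between vertices and colours: each vertex belongs to exactly $|\varphi(v)|=3$ of the classes, so $\sum_{i=1}^{11}|V_i|=3n$. Finally, by averaging some class satisfies $|V_i|\ge\frac{3n}{11}$, and this independent set yields $\alpha(G)\ge\frac{3}{11}n$, i.e.\ the independence ratio of $G$ is at least $\frac{3}{11}$.

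I expect no genuine obstacle in this argument; all the difficulty is concentrated in Theorem~\ref{MT}, which we are free to assume. Equivalently, one could simply quote the general inequality $\alpha(G)\ge n/\chi_f(G)$ together with $\chi_f(G)\le\frac{11}{3}$ from Theorem~\ref{MT}, but I would prefer to spell out the averaging step, since it produces a concrete independent set of the claimed size rather than merely a fractional estimate.
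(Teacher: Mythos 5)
Your proof is correct and is essentially the paper's argument: the paper simply invokes the standard inequality $\alpha(G)\ge |V(G)|/\chi_f(G)$ together with Theorem~\ref{MT}, and your colour-class averaging is exactly the usual proof of that inequality, just written out explicitly.
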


It is not clear whether the constant $\frac{11}{3}$ from Theorem~\ref{MT} is the best possible, and we suspect this is not the case.
Hence, the following question is of interest.
\begin{problem}
What is the infimum of fractional chromatic numbers of planar graphs without cycles of length 4 or 5?
\end{problem}
Let us remark that the counterexample to Steinberg's conjecture constructed in \cite{CohenAddad2016} is $(6:2)$-colorable,
and thus we cannot even exclude the possibility that the answer is $3$.

The proof of Theorem~\ref{MT} naturally proceeds in list coloring setting.
A \emph{list assignment} for a graph $G$ is a function $L$ that to each vertex $v$ of $G$ assigns a set $L(v)$ of colors.
A set coloring $\varphi$ of $G$ is an \emph{$L$-set coloring} if $\varphi(v)\subseteq L(v)$ for all $v\in V(G)$.
For a positive integer $b$, we say that $\varphi$ is an \emph{$(L:b)$-coloring} of $G$ if $\varphi$ is an $L$-set coloring
and $|\varphi(v)|=b$ for all $v\in V(G)$.  If such an $(L:b)$-coloring exists, we say that $G$ is \emph{$(L:b)$-colorable}.
For an integer $a\ge b$, we say that $G$ is \emph{$(a:b)$-choosable} if $G$ is $(L:b)$-colorable from any
assignment $L$ of lists of size $a$.  We actually prove the following strengthening of Theorem~\ref{MT}.

\begin{theorem}\label{MTl}
Every planar graph without cycles of length 4 or 5 is $(11:3)$-choosable.
\end{theorem}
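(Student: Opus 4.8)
The plan is to argue by contradiction via the discharging method applied to a minimal counterexample. Suppose Theorem~\ref{MTl} fails, and let $G$ be a plane graph with no cycle of length $4$ or $5$, together with a list assignment $L$ with $|L(v)|=11$ for all $v\in V(G)$, such that $G$ is not $(L:3)$-colorable and $|V(G)|+|E(G)|$ is minimum; it is often convenient to prove instead a mild strengthening in which some vertices are allowed larger lists and the surplus is tracked through the induction, which streamlines the handling of small cuts. Standard reductions let us assume $G$ is $2$-connected, so every face is bounded by a cycle and hence has length $3$ or at least $6$. Two elementary consequences of the girth hypothesis will be used repeatedly: no two triangular faces share an edge (the two ``third'' vertices together with the shared edge would form a $4$-cycle), and consequently a $3$-vertex is incident with at most one triangular face, its two other incident faces both having length at least $6$.

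First I would assemble the reducible configurations. The trivial one gives $\delta(G)\ge 3$: a vertex of degree at most $2$ may be deleted, the rest $(L:3)$-colored by minimality, and then recolored because its neighbors occupy at most $6\le 8$ colors. The real content is a list of forbidden local patterns among $3$-vertices, $4$-vertices, and triangular faces --- for instance bounds on the number of consecutive $3$-vertices, on triangular faces all of whose vertices have small degree, and on certain small separators. Each is proved by deleting (or contracting) the pattern, applying minimality to the smaller graph, and extending the coloring; the extension step is where the set-coloring setting bites, since after coloring the rest each vertex of the pattern keeps a reduced list of size $11-3\cdot(\#\text{colored neighbors})$ and one must show the pattern is $3$-fold colorable from these reduced lists --- not a greedy/degeneracy argument, but a genuine small list-coloring problem, handled in the tight cases by careful color-counting, sometimes via Hall's theorem. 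I expect \emph{selecting exactly the right family of reducible configurations, and pushing the color-counting through the borderline cases, to be the main obstacle}: too weak a family leaves a vertex or face with negative charge at the end.

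Then comes the discharging. Give each vertex $v$ initial charge $d(v)-4$ and each face $f$ charge $\ell(f)-4$; by Euler's formula these sum to $-8$, and the negative charges sit exactly on $3$-vertices and triangular faces (charge $-1$ each), while vertices of degree at least $5$ and faces of length at least $6$ carry all the positive charge. I would design rules sending charge from high-degree vertices and long faces to the $3$-vertices and triangular faces they are incident with or close to; the key structural input is that, with no $4$- or $5$-cycle, triangles are sparse (no shared edges, and the third edge at a $3$-vertex of a triangle leads into two long faces), so each donor is asked for only a bounded amount. One then verifies, in a finite case analysis on the degree of a vertex and the lengths of its incident faces --- a case analysis that the reducible configurations have pruned to finitely many situations --- that every vertex and every face ends with non-negative charge. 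This contradicts the total being $-8$, so no counterexample exists and Theorem~\ref{MTl} follows.
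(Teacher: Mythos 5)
Your outline correctly identifies the overall strategy the paper uses (minimal counterexample, reducible configurations proved via list recoloring of the deleted pattern, then discharging), but as written it is a plan rather than a proof: the two places where you yourself flag the difficulty --- ``selecting exactly the right family of reducible configurations'' and designing rules so that ``every vertex and every face ends with non-negative charge'' --- are precisely where all the mathematical content lies, and none of it is supplied. Concretely, the paper needs (i) a strengthened induction statement in which a precolored clique (a vertex, an edge, or a triangle, with pairwise disjoint $3$-sets) is carried along, which is what makes the $2$-connectivity and separating-triangle reductions legitimate; your vaguer ``surplus-tracking'' variant is not formulated, and without some such strengthening the cut reductions you invoke as ``standard'' do not go through. (ii) A toolbox of genuine set-list-coloring lemmas for paths, $6$-cycles, triangles and small trees with prescribed list sizes (e.g.\ that a path with list sizes $5,8,5,5,5$ whose last three vertices are already colorable extends, with control over which colors appear at the ends); these are not routine Hall-type counts and several require delicate color choices. (iii) The specific forbidden configurations: paths of low-degree vertices of bounded length, $6$-cycles of $4^-$-vertices with two $3$-vertices, triangles with a $3$-vertex and a nearby $3$-vertex, $4$-vertices with three $3$-neighbors, and two further constraints on $5$-vertices with several $3$-neighbors. (iv) Discharging rules that are \emph{not} a function of a vertex's degree and its incident face lengths alone: the paper's rules depend on structure at distance two (its type-I-1 through type-I-4 and type-0-1 through type-0-3 incidences), and the verification for $6$-faces is a long case analysis that leans on all the configurations in (iii). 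So the claim at the end of your second paragraph, that a finite case analysis ``on the degree of a vertex and the lengths of its incident faces'' suffices, would fail: with any fixed charge normalization (yours or the paper's), a $6$-face surrounded by $3$-vertices and triangles cannot be rescued without second-neighborhood information and the reducibility lemmas that exclude such neighborhoods.

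In short, the approach is the right one and matches the paper's, but the proposal stops exactly at the point where the theorem becomes hard; none of the reducible configurations, extension lemmas in the $(L:3)$ setting, or discharging rules are exhibited or verified, so the argument as given does not establish the theorem.
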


\section{Colorability of small graphs}

Let us start with some technical results on list-colorability of small graphs, especially paths and cycles.
In the proofs, it is convenient to work with a non-uniform version of set coloring.  Let $f:V(G)\to\mathbf{Z}_0^+$ be an arbitrary
function.  An \emph{$(L:f)$-coloring} of a graph $G$ is an $L$-set coloring $\varphi$ such that $|\varphi(v)|=f(v)$ for all $v\in V(G)$.
If such an $(L:f)$-coloring exists, we say that $G$ is \emph{$(L:f)$-colorable}.
We repeatedly use the following simple observation.
\begin{lemma}\label{lemma-redulist}
Let $L$ be an assignment of lists to vertices of a graph $G$, let $f$ assign non-negative integers to vertices of $G$,
and let $\psi$ be an $L$-set coloring of $G$ such that $|\psi(v)|\le f(v)$ for all $v\in V(G)$.  Let $L'$ be the list assignment defined by
$$L'(v)=L(v)\setminus \Big(\psi(v)\cup\bigcup_{u\in N_G(v)} \psi(u)\Big)$$
for all $v\in V(G)$, and let $f'(v)=f(v)-|\psi(v)|$ for all $v\in V(G)$.
If $G$ is $(L',f')$-colorable, then $G$ has an $(L:f)$-coloring $\varphi$ such that
$\psi(v)\subseteq \varphi(v)$ for all $v\in V(G)$.
\end{lemma}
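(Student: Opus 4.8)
The proof of Lemma~\ref{lemma-redulist} is a direct unpacking of the definitions, so the plan is essentially to verify that the obvious construction works. Suppose $G$ is $(L',f')$-colorable, and let $\varphi'$ be such a coloring; that is, $\varphi'(v)\subseteq L'(v)$ and $|\varphi'(v)|=f'(v)=f(v)-|\psi(v)|$ for every $v\in V(G)$. I would define $\varphi(v)=\psi(v)\cup\varphi'(v)$ for all $v$ and claim that $\varphi$ is the desired $(L:f)$-coloring with $\psi(v)\subseteq\varphi(v)$.

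There are three things to check. First, \emph{the sizes}: since $\varphi'(v)\subseteq L'(v)\subseteq L(v)\setminus\psi(v)$, the sets $\psi(v)$ and $\varphi'(v)$ are disjoint, so $|\varphi(v)|=|\psi(v)|+|\varphi'(v)|=|\psi(v)|+f(v)-|\psi(v)|=f(v)$. (Here one uses $|\psi(v)|\le f(v)$ so that $f'(v)$ is a genuine non-negative integer and $\varphi'$ is well-defined.) Second, \emph{the lists}: $\psi(v)\subseteq L(v)$ because $\psi$ is an $L$-set coloring, and $\varphi'(v)\subseteq L'(v)\subseteq L(v)$, so $\varphi(v)\subseteq L(v)$. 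Third, and the only step with any content, \emph{properness}: I must show $\varphi(u)\cap\varphi(v)=\0$ for every edge $uv\in E(G)$. Expanding, $\varphi(u)\cap\varphi(v)=(\psi(u)\cup\varphi'(u))\cap(\psi(v)\cup\varphi'(v))$, which splits into four intersections. We have $\psi(u)\cap\psi(v)=\0$ because $\psi$ is an $L$-set coloring; $\varphi'(u)\cap\varphi'(v)=\0$ because $\varphi'$ is an $L'$-set coloring; and $\psi(u)\cap\varphi'(v)=\0$ since $v\in N_G(u)$ implies $\varphi'(v)\subseteq L'(v)\subseteq L(v)\setminus\bigcup_{w\in N_G(v)}\psi(w)\subseteq L(v)\setminus\psi(u)$, and symmetrically $\varphi'(u)\cap\psi(v)=\0$. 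Hence all four terms vanish and $\varphi(u)\cap\varphi(v)=\0$.

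Finally $\psi(v)\subseteq\varphi(v)$ is immediate from the definition $\varphi(v)=\psi(v)\cup\varphi'(v)$, completing the proof. There is no real obstacle here: the subtraction of $\bigcup_{u\in N_G(v)}\psi(u)$ from $L(v)$ in the definition of $L'$ is precisely engineered so that the cross terms in the properness check disappear, and the subtraction of $|\psi(v)|$ in the definition of $f'$ is precisely what makes the sizes add up. The one point worth stating carefully is that $\psi(v)$ is removed from $L'(v)$ as well (the ``$\psi(v)\cup$'' inside the big parentheses), which guarantees $\psi(v)$ and $\varphi'(v)$ are disjoint and hence that the union has size exactly $f(v)$ rather than something smaller.
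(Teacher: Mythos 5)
Your proof is correct and uses exactly the construction in the paper, namely setting $\varphi(v)=\psi(v)\cup\varphi'(v)$; the paper simply omits the routine verification that you spell out. Nothing further is needed.
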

\begin{proof}
If $\varphi'$ is an $(L',f')$-coloring of $G$, it suffices to set
$\varphi(v)=\psi(v)\cup\varphi'(v)$ for all $v\in V(G)$.
\end{proof}

We also use the following observation.
\begin{lemma}\label{lemma-greedy}
Let $L$ be an assignment of lists to vertices of a graph $G$, let $f$ assign non-negative integers to vertices of $G$,
and let $v_1$, \ldots, $v_n$ be an ordering of vertices of $G$.
If
\begin{equation}\label{eq:assgreedy}
|L(v_i)|\ge f(v_i)+\sum_{v_jv_i\in E(G), j<i} f(v_j)
\end{equation}
holds for $1\le i\le n$, then $G$ has an $(L:f)$-coloring.
\end{lemma}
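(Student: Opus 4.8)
The plan is a straightforward greedy argument: process the vertices in the given order $v_1,\ldots,v_n$ and build the coloring one vertex at a time. Concretely, I would prove by induction on $i$ that there is an $L$-set coloring $\varphi$ of the induced subgraph $G[\{v_1,\ldots,v_i\}]$ with $|\varphi(v_j)|=f(v_j)$ for all $j\le i$; the case $i=0$ is trivial, and the case $i=n$ is exactly the desired conclusion since $G[\{v_1,\ldots,v_n\}]=G$.

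For the inductive step, suppose $v_1,\ldots,v_{i-1}$ have already been assigned pairwise-compatible sets. When we come to color $v_i$, the only vertices whose colors must be avoided are its already-colored neighbors, namely those $v_j$ with $j<i$ and $v_jv_i\in E(G)$; the union $U$ of the sets assigned to these neighbors has size at most $\sum_{v_jv_i\in E(G),\,j<i} f(v_j)$. By hypothesis~\eqref{eq:assgreedy}, $|L(v_i)\setminus U|\ge |L(v_i)|-\sum_{v_jv_i\in E(G),\,j<i} f(v_j)\ge f(v_i)$, so we may choose $\varphi(v_i)$ to be an arbitrary $f(v_i)$-element subset of $L(v_i)\setminus U$. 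Since $\varphi(v_i)$ is disjoint from the set assigned to every neighbor of $v_i$ among $v_1,\ldots,v_{i-1}$, extending $\varphi$ in this way keeps it an $L$-set coloring of $G[\{v_1,\ldots,v_i\}]$, which completes the induction.

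There is no real obstacle here: this is precisely the classical greedy/degeneracy coloring bound lifted to the set-coloring setting. The only point requiring a moment's care is that when coloring $v_i$ we need disjointness only from \emph{earlier} neighbors — later neighbors will be made disjoint from $v_i$ when their own turn comes — which is exactly why the one-sided sum over $j<i$ in \eqref{eq:assgreedy} is enough. (One could alternatively invoke Lemma~\ref{lemma-redulist} repeatedly, each time with $\psi$ supported on a single vertex, but the direct induction above is cleaner.)
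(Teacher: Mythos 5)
Your proof is correct and is essentially the paper's argument: both color the vertices greedily in the order $v_1,\ldots,v_n$, using \eqref{eq:assgreedy} to guarantee $f(v_i)$ available colors after excluding the sets on earlier neighbors. The only cosmetic difference is that you run the induction on the length of the colored prefix, whereas the paper phrases it as induction on $n$ by coloring $v_1$ and passing to $G-v_1$ with reduced lists.
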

\begin{proof}
We prove the claim by induction on $n$. The basic case $n=0$ is trivial.  If $n\ge 1$, then $|L(v_1)|\ge f(v_1)$ by the assumptions,
and thus there exists a subset $A$ of $L(v_1)$ of size $f(v_1)$.  Let $L'(v_i)=L(v_i)\setminus A$ for all $i$
such that $v_1v_i\in E(G)$, and $L'(v_i)=L(v_i)$ for all $i\ge 2$ such that $v_1v_i\not\in E(G)$.
Since $|L'(v_i)|\ge |L(v_i)|-f(v_1)$ in the former case and $|L'(v_i)|=|L(v_i)|$ in the latter case,
it is easy to verify that the assumption (\ref{eq:assgreedy}) holds for $G-v_1$ with the vertex ordering $v_2$, \ldots, $v_n$
and the list assignment $L'$.
Hence, by the induction hypothesis, $G-v_1$ has an $(L':f)$-coloring. Assigning $A$ to $v_1$ turns this coloring into an $(L:f)$-coloring of $G$.
\end{proof}
When Lemma~\ref{lemma-greedy} applies, we say that we \emph{color vertices of $G$ greedily in order $v_1$, \ldots, $v_n$}.

Finally, let us make another simple observation, which we will often (implicitly) apply.
Let $G$ be a graph, let $G_0$ be a subgraph of $G$, and let $f,g:V(G)\to\mathbf{Z}_0^+$ be functions such that
$f(v)\le g(v)$ for all $v\in V(G)$.
Let us consider the situation that we need to prove that a graph is $(L:f)$-colorable
for every list assignment $L$ such that $|L(v)|\ge g(v)$ for all $v\in V(G)$, under assumption that $G_0$ is $(L:f)$-colorable.
Then it suffices to prove this for all list assignments $L(v)$ such that $|L(v)|=g(v)$ for all $v\in V(G)$: if $|L(v)|>g(v)$,
then we can without loss of generality throw away any color in $L(v)$, not used in the $(L:f)$-coloring of $G_0$ when $v\in V(G_0)$.

\begin{lemma}\label{3-3-3}
Let $L$ be a list assignment for a path $P=v_1v_2v_3$.
If $|L(v_1)|=|L(v_3)|=5$ and $|L(v_2)|=8$, then $P$ is $(L:3)$-colorable.
Moreover, for any colors $\alpha_1,\alpha_2\in L(v_1)$ and $\beta\in L(v_3)$,
there exists an $(L:3)$-coloring $\varphi$ of $P$ such that $\alpha_1,\alpha_2\in \varphi(v_1)$
and $\beta\in \varphi(v_3)$.
\end{lemma}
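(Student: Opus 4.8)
The plan is to first peel off the prescribed colors using Lemma~\ref{lemma-redulist}, turning the claim into a non-uniform list-coloring problem on the same path $P$, and then to dispatch that reduced problem by a short case analysis in which the only non-greedy case is closed by a pigeonhole argument. Throughout I use that $v_1$ and $v_3$ are non-adjacent in $P$, so a single color may be assigned to both.

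First I would observe that we may assume $\alpha_1\neq\alpha_2$: if $\alpha_1=\alpha_2$, replace $\alpha_2$ by an arbitrary color of $L(v_1)\setminus\{\alpha_1\}$ and apply the case of distinct prescribed colors. Next, apply Lemma~\ref{lemma-redulist} with $\psi(v_1)=\{\alpha_1,\alpha_2\}$, $\psi(v_3)=\{\beta\}$, $\psi(v_2)=\emptyset$ and $f\equiv 3$. Since $\psi$ is a valid $L$-set coloring (there is no edge between $v_1$ and $v_3$), it remains to find an $(L':f')$-coloring of $P$, where $f'(v_1)=1$, $f'(v_2)=3$, $f'(v_3)=2$, and where $|L'(v_1)|=3$, $|L'(v_3)|=4$, and $|L'(v_2)|=|L(v_2)\setminus\{\alpha_1,\alpha_2,\beta\}|\ge 5$.

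If $|L'(v_2)|\ge 6$, then Lemma~\ref{lemma-greedy} applied to the order $v_1,v_3,v_2$ finishes at once, the binding inequality being $6\ge f'(v_2)+f'(v_1)+f'(v_3)=3+1+2$. So assume $|L'(v_2)|=5$, which forces $\alpha_1,\alpha_2,\beta$ to be pairwise distinct and all contained in $L(v_2)$. If $L'(v_1)\not\subseteq L'(v_2)$, I would use Lemma~\ref{lemma-redulist} once more to assign to $v_1$ a single color from $L'(v_1)\setminus L'(v_2)$; this does not shrink $L'(v_2)$, and the rest is greedy on $v_3,v_2$ since $|L'(v_2)|=5\ge 3+2$. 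The case $L'(v_3)\not\subseteq L'(v_2)$ is symmetric, peeling one of the two colors of $v_3$ from outside $L'(v_2)$. In the remaining case $L'(v_1)\cup L'(v_3)\subseteq L'(v_2)$, we have $|L'(v_1)|+|L'(v_3)|=7>5=|L'(v_2)|$, so $L'(v_1)\cap L'(v_3)\neq\emptyset$; choosing a common color $c$ and assigning it to both $v_1$ and $v_3$ deletes only $c$ from $L'(v_2)$, after which $v_3$ still needs one color from a list of size $3$ and $v_2$ needs $3$ colors from a list of size $4\ge 3+1$, which Lemma~\ref{lemma-greedy} provides.

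The point to watch, and the reason the statement is not completely trivial, is that the crude bound $|\varphi(v_1)\cup\varphi(v_3)|\le 6$ leaves only $|L(v_2)|-6=2$ colors for $v_2$, one short of the required three. The proof must therefore arrange that either a color assigned to $v_1$ or $v_3$ lies outside $L(v_2)$, or that $v_1$ and $v_3$ (which are non-adjacent) receive a common color; the case analysis above is exactly the bookkeeping ensuring one of these always occurs, and the pigeonhole step $3+4>5$ is its crux. I expect this pigeonhole case to be the only genuinely non-routine point; everything else is a mechanical application of Lemmas~\ref{lemma-redulist} and~\ref{lemma-greedy}.
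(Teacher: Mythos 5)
Your argument is correct. The first half is identical to the paper's: both proofs normalize the prescribed colors, apply Lemma~\ref{lemma-redulist} with $\psi(v_1)=\{\alpha_1,\alpha_2\}$, $\psi(v_3)=\{\beta\}$, and reduce to finding an $(L':f')$-coloring with $f'=(1,3,2)$ and list sizes $(3,\ge 5,4)$. Where you diverge is in how this reduced problem is solved. The paper colors the middle vertex first: it picks $\gamma_1\in L'(v_2)\setminus L'(v_3)$ and $\gamma_2\in L'(v_2)\setminus(\{\gamma_1\}\cup L'(v_1))$, puts both into $\varphi'(v_2)$, and then the ends extend trivially; this is uniform, with no case split, and works already with $|L'(v_2)|=5$. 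You instead color the endpoints first and split into cases according to whether $|L'(v_2)|\ge 6$, whether an endpoint list escapes $L'(v_2)$, or (by pigeonhole, using that $v_1$ and $v_3$ are non-adjacent) whether the two endpoint lists share a color inside $L'(v_2)$; each branch then closes greedily, and I checked that the greedy inequalities of Lemma~\ref{lemma-greedy} hold in every branch. Both routes are valid; the paper's "choose the middle set to dodge both ends" trick buys brevity and avoids the case analysis, while your endpoint-first argument makes explicit the underlying obstruction you correctly identify (either an end color must leave $L(v_2)$ or the two ends must reuse a color), which is the same mechanism the paper exploits implicitly through $\gamma_1,\gamma_2$.
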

\begin{proof}
Consider arbitrary colors $\alpha_1,\alpha_2\in L(v_1)$ and $\beta\in L(v_3)$.
Let $f'(v_1)=1$, $f'(v_2)=3$, and $f'(v_3)=2$.
By Lemma~\ref{lemma-redulist}, it suffices to prove that $P$ has
an $(L':f')$-coloring for any list assignment $L'$ such that $|L'(v_1)|=3$, $|L'(v_2)|=5$, and $|L'(v_3)|=4$.

Choose colors $\gamma_1\in L'(v_2)\setminus L'(v_3)$ and $\gamma_2\in L'(v_2)\setminus (\{\gamma_1\}\cup L'(v_1))$.
Choose $\varphi'(v_2)$ as any $3$-element subset of $L'(v_2)$ containing $\gamma_1$ and $\gamma_2$.
Then $|L'(v_1)\setminus \varphi'(v_2)|\ge 1$ and $|L'(v_3)\setminus \varphi'(v_2)|\ge 2$,
and thus we can choose $\varphi'(v_1)$ as a $1$-element subset of $L'(v_1)\setminus \varphi'(v_2)$
and $\varphi'(v_3)$ as a $2$-element subset of $L'(v_3)\setminus \varphi'(v_2)$.
Clearly, $\varphi'$ is an $(L':f')$-coloring of $P$.
\end{proof}

\begin{lemma}\label{3-3-4-3}
Let $L$ be a list assignment for a path $P=v_1v_2v_3v_4$
such that $|L(v_1)|=|L(v_3)|=|L(v_4)|=5$, $|L(v_2)|=8$ and the subpath $v_3v_4$ of $P$
has an $(L:3)$-coloring.  Then $P$ is $(L:3)$-colorable.
Moreover, for any colors $\alpha\in L(v_1)$ and $\beta\in L(v_4)$,
the path $P$ has an $(L:3)$-coloring $\varphi$ such that $\alpha\in \varphi(v_1)$ and $\beta\in \varphi(v_4)$.
\end{lemma}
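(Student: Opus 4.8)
The plan is to reduce to Lemma~\ref{3-3-3} applied to the subpath $v_1v_2v_3$, after first choosing the colors on $v_4$ (and partly on $v_3$) so that the residual lists on the three-vertex path still satisfy the hypotheses of Lemma~\ref{3-3-3}. As in the earlier lemmas, by Lemma~\ref{lemma-redulist} we may peel off any pre-assignment we make. Concretely, fix arbitrary $\alpha\in L(v_1)$ and $\beta\in L(v_4)$. First use the hypothesis that the subpath $v_3v_4$ is $(L:3)$-colorable: let $\psi$ be such a coloring, and then modify it on $v_4$ so that $\beta\in\psi(v_4)$ — this is possible because from a list of size $5$ we can always pick a $3$-subset containing any prescribed color while keeping it disjoint from the (at most $3$) colors forbidden by $v_3$, since $5-3=2<3$ would be a problem, so instead one should argue directly: choose $\psi(v_3)$ first as any $3$-subset of $L(v_3)$, then $\psi(v_4)$ as a $3$-subset of $L(v_4)\setminus\psi(v_3)$ containing $\beta$ if possible. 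The point is that we really only need $\psi(v_3)$ to be chosen, together with one color on $v_4$; so the cleanest route is to first fix $\psi(v_3)$ as a $3$-subset of $L(v_3)$ avoiding as few constraints as possible, then color $v_4$ greedily.

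Here is the order of steps I would carry out. (1) Set $f'=3$ on $v_1,v_3,v_4$ and leave $v_2$ with $f'=3$; we want to pre-color $v_3$ and $v_4$ and then invoke Lemma~\ref{3-3-3} on $v_1v_2v_3$ with the reduced lists. (2) To be able to apply Lemma~\ref{3-3-3}, after pre-coloring $v_3$ the reduced list of $v_3$ must have size $5$ and the reduced list of $v_2$ must have size $8$; but $v_3$ is precolored, so instead the correct formulation is: choose $\varphi(v_3)$ a $3$-subset of $L(v_3)$, choose $\varphi(v_4)\subseteq L(v_4)\setminus\varphi(v_3)$ a $3$-subset containing $\beta$, then delete $\varphi(v_3)$ from $L(v_2)$ to get a list of size $\ge 5$ on $v_2$, and apply Lemma~\ref{3-3-3} to the path $v_1v_2v_3$ with $f'(v_1)=3$, $f'(v_2)=3$, $f'(v_3)=0$ — but that path-version of Lemma~\ref{3-3-3} was stated for $(L:3)$. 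So the honest plan is: treat $v_3$ as already colored and simply color $v_2$ then $v_1$ greedily. After removing $\varphi(v_3)$ from $L(v_2)$, vertex $v_2$ has a list of size $\ge 5$; remove further $\varphi(v_3)$-nothing; we need $|\varphi(v_2)|=3$ chosen from this size-$5$ list avoiding nothing on the $v_3$ side (already accounted) and then $v_1$ needs $3$ colors from its size-$5$ list avoiding $\varphi(v_2)$, i.e. $|L(v_1)\setminus\varphi(v_2)|\ge 5-3=2<3$, which fails. Hence greedy alone is insufficient and we genuinely need the slack of Lemma~\ref{3-3-3}.

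So the actual argument is: keep $v_3$ with a list of size $5$ by only partially committing. Choose $\varphi(v_4)$ to be a $3$-subset of $L(v_4)$ containing $\beta$. This removes $3$ colors from $v_3$, leaving $|L(v_3)\setminus\varphi(v_4)|\ge 2$; pick any two colors $\beta',\beta''$ in that set. Now apply Lemma~\ref{3-3-3} to the path $v_1v_2v_3$ (with original lists of sizes $5,8,5$) demanding $\alpha\in\varphi(v_1)$ and — here we use the "moreover" clause of Lemma~\ref{3-3-3}, which lets us prescribe two colors on one endpoint and one on the other — prescribing $\beta',\beta''\in\varphi(v_3)$; but Lemma~\ref{3-3-3} prescribes two colors on $v_1$ and one on $v_3$, not the reverse, so by symmetry of that lemma (its statement is symmetric in $v_1,v_3$ up to relabeling, both having lists of size $5$) we instead prescribe $\beta'\in\varphi(v_3)$ and $\alpha,\alpha'\in\varphi(v_1)$ for an arbitrary second color $\alpha'$. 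We then need $\varphi(v_3)$ to be disjoint from $\varphi(v_4)$: since $\beta'\notin\varphi(v_4)$ is forced but the other two colors of $\varphi(v_3)$ are not controlled, this can fail. The fix is to prescribe two of the three colors of $\varphi(v_3)$, namely $\beta',\beta''$, which forces $\varphi(v_3)\cap\varphi(v_4)\subseteq\{$one uncontrolled color$\}$ — still not enough. The clean resolution, and the step I expect to be the main obstacle, is to strengthen the bookkeeping: apply Lemma~\ref{3-3-3} with a modified list $L^*(v_3)=L(v_3)\setminus\varphi(v_4)$ of size $\ge 2$ padded up, i.e. first delete $\varphi(v_4)$ from both $L(v_3)$ and verify $|L(v_3)\setminus\varphi(v_4)|\ge 2$, then invoke a three-vertex coloring statement that only needs $v_3$ to have a list of size $2$ and handle $v_1,v_2$ with their full slack $5,8$. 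That is exactly the content one gets from Lemma~\ref{lemma-redulist} plus Lemma~\ref{3-3-3}: delete a $3$-subset containing $\beta$ from $v_4$, restrict $v_3$, and then the residual instance on $v_1v_2v_3$ has list sizes $(5,8,\ge2)$ with demands $(3,3,3)$, which after using Lemma~\ref{lemma-redulist} to pre-place the forced colors is covered by the bound in Lemma~\ref{3-3-3}; the prescription $\alpha\in\varphi(v_1)$ is handled by its "moreover" clause. I would write this up by first reducing via Lemma~\ref{lemma-redulist} to lists of exact sizes, then doing the $v_4$-then-$v_3$-then-$(v_1v_2v_3)$ peeling in that order, citing Lemma~\ref{3-3-3} for the final three-vertex step.
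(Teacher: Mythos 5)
Your final plan does not close the gap you yourself identified, and as written it is not a proof. The decisive flaw is that you commit a full $3$-element set $\varphi(v_4)\subseteq L(v_4)$ containing $\beta$ before touching $v_3$. Since $|L(v_3)\cup L(v_4)|$ can be as small as $6$, it may happen that $\varphi(v_4)\subseteq L(v_3)$, so the residual list on $v_3$ has size $2$ while $v_3$ still needs $3$ colors; an instance with list sizes $(5,8,\ge 2)$ and demands $(3,3,3)$ on $v_1v_2v_3$ is then simply not colorable, and in particular it is not ``covered by the bound in Lemma~\ref{3-3-3}'', which requires $|L(v_3)|=5$ (and, after peeling, at least $4$). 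Your earlier attempts in the same write-up (prescribing one or two colors of $\varphi(v_3)$ via a symmetric use of Lemma~\ref{3-3-3}) fail for the reason you already noted, so the proposal never produces a valid argument. Even the repair of choosing $\varphi(v_4)$ to meet $L(v_3)$ in at most two colors does not finish: then $\varphi(v_3)$ may be forced, $L(v_2)\setminus\varphi(v_3)$ can have exactly $5$ elements all lying inside $L(v_1)$, and a greedy choice of $\varphi(v_2)$ can leave only $2$ colors (possibly missing $\alpha$) for $v_1$.

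The missing idea, which is how the paper proceeds, is to commit only partially at the endpoints and to reserve ``escape'' colors before any greedy step. Concretely: since $v_3v_4$ is $(L:3)$-colorable, $L(v_3)\neq L(v_4)$, so one can pick $\beta'\in L(v_4)$ with at most one of $\beta,\beta'$ in $L(v_3)$; pre-place only $\{\alpha\}$ on $v_1$ and $\{\beta,\beta'\}$ on $v_4$ and apply Lemma~\ref{lemma-redulist}, which leaves residual sizes $(4,7,4,3)$ with demands $(2,3,3,1)$. Then reserve $\gamma_3\in L'(v_3)\setminus L'(v_4)$ and $\gamma_2\in L'(v_2)\setminus(\{\gamma_3\}\cup L'(v_1))$, apply Lemma~\ref{lemma-redulist} again to reach sizes $(4,5,2,3)$ with demands $(2,2,2,1)$, and finish greedily in the order $v_3,v_4,v_2,v_1$. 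The point is that the colors $\beta'$, $\gamma_3$, $\gamma_2$ are chosen to avoid the neighbours' lists, so the later vertices in the greedy order never run out of room; your proposal lacks this reservation step and cannot be completed without it.
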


\begin{proof}
Since the path $v_3v_4$ is $(L:3)$-colorable, we have $L(v_3)\neq L(v_4)$.
Consider arbitrary colors $\alpha\in L(v_1)$ and $\beta,\beta'\in L(v_4)$
such that at most one of the colors $\beta$ and $\beta'$ belongs to $L(v_3)$.
Let $f'(v_1)=2$, $f'(v_2)=f'(v_3)=3$ and $f'(v_4)=1$.  By Lemma~\ref{lemma-redulist}, it suffices to prove that $P$ has
an $(L':f')$-coloring for any list assignment $L'$ such that $|L'(v_1)|=4$, $|L'(v_2)|=7$, $|L'(v_3)|=4$, and $|L'(v_4)|=3$.

Let $\gamma_3$ be any color in $L'(v_3)\setminus L'(v_4)$ and let $\gamma_2$ be any color in $L'(v_2)\setminus (\{\gamma_3\}\cup L'(v_1))$.
Let $f''(v_1)=f''(v_2)=f''(v_3)=2$ and $f''(v_4)=1$.
By Lemma~\ref{lemma-redulist}, it suffices to prove that $P$ has
an $(L'':f'')$-coloring for any list assignment $L''$ such that $|L''(v_1)|=4$, $|L''(v_2)|=5$, $|L''(v_3)|=2$, and $|L''(v_4)|=3$.
This is the case by coloring the vertices of $P$ greedily in order $v_3$, $v_4$, $v_2$, and $v_1$.
\end{proof}

\begin{lemma} \label{3-3-4-4-3}
Let $L$ be a list assignment for a path $P=v_1\ldots v_5$
such that $|L(v_1)|=|L(v_3)|=|L(v_4)|=|L(v_5)|=5$, $|L(v_2)|=8$,
and the subpath $v_3v_4v_5$ has an $(L:3)$-coloring.
Then $P$ is $(L:3)$-colorable.  Moreover, for any colors $\alpha\in L(v_1)$ and $\beta\in L(v_5)$ such that $\{\beta\}\neq L(v_4)\setminus L(v_3)$,
the path $P$ has an $(L:3)$-coloring $\varphi$ such that $\alpha\in \varphi(v_1)$ and $\beta\in \varphi(v_5)$.
\end{lemma}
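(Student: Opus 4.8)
The plan is to mimic the structure of the proof of Lemma~\ref{3-3-4-3}, peeling off the "free" part of the colors at $v_1$ and $v_5$ via Lemma~\ref{lemma-redulist} and then reducing the remaining problem on $v_1v_2v_3$ to an application of Lemma~\ref{3-3-3}. Concretely, fix colors $\alpha\in L(v_1)$ and $\beta\in L(v_5)$ with $\{\beta\}\neq L(v_4)\setminus L(v_3)$. First I would deal with the tail $v_3v_4v_5$. Since this subpath is $(L:3)$-colorable, we have $L(v_4)\neq L(v_3)$ and $L(v_4)\neq L(v_5)$, so $L(v_4)\setminus L(v_3)$ is nonempty; pick $\gamma_4\in L(v_4)\setminus L(v_3)$ with $\gamma_4\neq\beta$ (possible because $\{\beta\}\neq L(v_4)\setminus L(v_3)$, so either $|L(v_4)\setminus L(v_3)|\ge 2$ or its unique element differs from $\beta$). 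Then choose a $3$-set $\varphi(v_5)\ni\beta$ inside $L(v_5)$, and I want to choose $\varphi(v_4)\ni\gamma_4$ disjoint from $\varphi(v_5)$: since $|L(v_4)\setminus\varphi(v_5)|\ge 2$ this needs a little care, so instead I would set this up through $f$-values, assigning $v_4$ a demand that still leaves room, and defer the precise bookkeeping.

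The cleaner route is the following. Apply Lemma~\ref{lemma-redulist} with $\psi(v_5)=\{\beta\}$ and $\psi(v_1)=\{\alpha\}$ (and $\psi$ empty elsewhere), reducing to showing: for every $L'$ with $|L'(v_1)|=4$, $|L'(v_2)|=8$, $|L'(v_3)|=5$, $|L'(v_4)|=5$ (or $4$, if $\beta\in L(v_4)$), $|L'(v_5)|=4$, and $f'(v_1)=f'(v_5)=2$, $f'(v_2)=f'(v_3)=f'(v_4)=3$, the path $P$ is $(L':f')$-colorable, where moreover $v_3v_4$ still satisfies $L'(v_3)\neq L'(v_4)$ (this uses the hypothesis $\{\beta\}\neq L(v_4)\setminus L(v_3)$: removing $\beta$ from $L(v_4)$ — if it is present — does not make $L'(v_4)\subseteq L'(v_3)$). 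Now color $v_4$ and $v_5$ first: pick $\gamma_4\in L'(v_4)\setminus L'(v_3)$, extend to a $3$-set $\varphi(v_4)$ with at most two colors used by $v_5$'s list — more simply, color $v_5$ greedily after $v_4$ since $|L'(v_5)|=4\ge 2+\text{(overlap)}$; a short check shows $v_4$ then $v_5$ can be colored with $\gamma_4\in\varphi(v_4)$. After this, apply Lemma~\ref{lemma-redulist} again to pass to the path $v_1v_2v_3$ with residual lists of sizes $\ge 3$, $\ge 8$, $\ge 5$ at $v_1,v_2,v_3$: here $v_3$ loses at most $3$ colors (those of $\varphi(v_4)$), but because $\gamma_4\notin L'(v_3)$ it loses at most $2$, leaving $|L''(v_3)|\ge 3$ — and I should arrange $f''(v_1)=1$, $f''(v_2)=3$, $f''(v_3)=1$ so that the hypotheses of a $3$–$8$–$5$-type argument (Lemma~\ref{3-3-3} with roles adjusted) apply and force $\alpha\in\varphi(v_1)$.

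The main obstacle is the interaction between the two endpoint constraints and the "$\{\beta\}\neq L(v_4)\setminus L(v_3)$" side condition: I must choose $\gamma_4\in L(v_4)\setminus L(v_3)$ distinct from $\beta$ and simultaneously make sure that after removing $\beta$ and $\varphi(v_4)$ the list at $v_3$ retains enough colors and the list at $v_2$ (size $8$) comfortably dominates the demands of its two neighbors — a straightforward but slightly fiddly counting argument. Everything else is a routine combination of Lemmas~\ref{lemma-redulist}, \ref{lemma-greedy}, and \ref{3-3-3}, exactly parallel to the shorter lemmas above; I would organize the write-up as two successive reductions via Lemma~\ref{lemma-redulist} followed by a greedy colouring, so that no genuinely new idea beyond careful choice of $\gamma_4$ is needed.
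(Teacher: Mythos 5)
Your overall plan (peel off colors with Lemma~\ref{lemma-redulist}, choose $\gamma_4\in L(v_4)\setminus L(v_3)$ with $\gamma_4\neq\beta$, then finish on $v_1v_2v_3$) starts in the same spirit as the paper, but the execution has a genuine gap at the last stage, and the bookkeeping there is wrong. After your first reduction (peeling only $\alpha$ at $v_1$ and $\beta$ at $v_5$) the residual demands are $f'(v_1)=2$, $f'(v_2)=f'(v_3)=3$, not the $f''(v_1)=1$, $f''(v_3)=1$ you write, and the residual lists are $|L'(v_2)|\ge 7$ (not $8$, since $\alpha$ may lie in $L(v_2)$) and, after coloring $v_4$ with three colors containing $\gamma_4$, only $|L''(v_3)|\ge 3$ (your ``$\ge 3,\ge 8,\ge 5$'' is inconsistent with your own next sentence). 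So the problem you must actually solve at the end is: demands $(2,3,3)$ on $v_1v_2v_3$ against lists of sizes $(4,\ge 7,\ge 3)$. This is \emph{not} always solvable: take $L''(v_3)=\{1,2,3\}$, $L''(v_1)=\{4,5,6,7\}$, $L''(v_2)=\{1,\ldots,7\}$; then $\varphi(v_3)$ is forced to be $\{1,2,3\}$, $\varphi(v_2)$ must be a $3$-subset of $\{4,5,6,7\}$, and only one color remains for $v_1$, which needs two. Hence ``a routine greedy/Lemma~\ref{3-3-3}-type argument'' does not close the proof; you would need to show that the freedom in choosing $\varphi(v_4)$ (and possibly a second reserved color at $v_1$) always steers you away from this tight configuration, and that is precisely the nontrivial interaction your write-up defers. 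The first stage is also only asserted (``a short check''): with $f'(v_4)=3$, $f'(v_5)=2$ and $|L'(v_4)|\ge 4$, $|L'(v_5)|=4$ you must pick $\varphi(v_4)$ meeting $L'(v_5)$ in at most two colors, which requires noting $L'(v_4)\not\subseteq L'(v_5)$ and including a color of $L'(v_4)\setminus L'(v_5)$ — workable, but not checked.

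The paper avoids the squeeze you run into by never fully coloring $v_4$ and $v_5$ before dealing with $v_3$. It first reserves single conflict-avoiding colors along the whole path: $\alpha$ at $v_1$, $\varepsilon\in L(v_3)\setminus L(v_4)$ at $v_3$, $\gamma\in L(v_4)\setminus L(v_3)$ with $\gamma\neq\beta$ at $v_4$, and \emph{two} colors $\{\beta,\beta'\}$ at $v_5$ with $\beta'$ chosen outside $L(v_4)$ when $\beta\in L(v_4)$. A second round reserves $\kappa_3\in L'(v_3)\setminus L'(v_4)$, $\kappa_4\in L'(v_4)\setminus L'(v_5)$ and, crucially, $\kappa_2\in L'(v_2)\setminus(\{\kappa_3\}\cup L'(v_1))$, which defuses exactly the $v_1$/$v_2$ conflict that kills your final step; the remaining demands are then met greedily in the order $v_3,v_4,v_5,v_2,v_1$ — note $v_3$ is colored \emph{before} $v_4$, the opposite of your plan. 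To repair your proposal you would need to import these ideas (a reserved color at $v_3$ off $L(v_4)$, a second reserved color at $v_5$, and a reserved color at $v_2$ avoiding $L(v_1)$), at which point you have essentially reconstructed the paper's proof.
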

\begin{proof}
Since the path $v_3v_4v_5$ is $(L:3)$-colorable, we have $L(v_3)\neq L(v_4)\neq L(v_5)$.
Consider arbitrary colors $\alpha\in L(v_1)$, $\varepsilon\in L(v_3)\setminus L(v_4)$, and $\beta\in L(v_5)$
such that $\{\beta\}\neq L(v_4)\setminus L(v_3)$.
There exists a color $\gamma\in L(v_4)\setminus L(v_3)$ such that $\gamma\neq \beta$.
If $\beta\not\in L(v_4)$, then choose $\beta'\in L(v_5)\setminus\{\beta,\gamma\}$ arbitrarily;
otherwise, choose $\beta'\in L(v_5)\setminus L(v_4)$ arbitrarily.  In either case,
assigning sets $\{\alpha\}$, $\emptyset$, $\{\varepsilon\}$, $\{\gamma\}$, $\{\beta,\beta'\}$ to vertices of $P$ in order
gives an $L$-set coloring.  Let $f'(v_1)=f'(v_3)=f'(v_4)=2$, $f'(v_2)=3$, and $f'(v_5)=1$.
By Lemma~\ref{lemma-redulist}, it suffices to prove that $P$ has
an $(L':f')$-coloring for any list assignment $L'$ such that $|L'(v_1)|=4$, $|L'(v_2)|=6$, $|L'(v_3)|=4$, $|L'(v_4)|=3$, and $|L'(v_5)|=2$.

Choose $\kappa_3\in L'(v_3)\setminus L'(v_4)$, $\kappa_4\in L'(v_4)\setminus L'(v_5)$, and $\kappa_2\in L'(v_2)\setminus (\{\kappa_3\}\cup L'(v_1))$.
Let $f''(v_1)=f''(v_2)=2$ and $f''(v_3)=f''(v_4)=f''(v_5)=1$.  By Lemma~\ref{lemma-redulist}, it suffices to prove that $P$ has
an $(L'':f'')$-coloring for any list assignment $L''$ such that $|L''(v_1)|=|L''(v_2)|=4$, $|L''(v_3)|=1$, and $|L''(v_4)|=|L''(v_5)|=2$.
This is the case by coloring the vertices of $P$ greedily in order $v_3$, $v_4$, $v_5$, $v_2$, and $v_1$.
\end{proof}

\begin{lemma}
\label{3-3-4-4-4-3}
Let $L$ be a list assignment for a path $P=v_1\ldots v_6$
such that $|L(v_1)|=|L(v_3)|=\ldots=|L(v_6)|=5$, $|L(v_2)|=8$,
and the subpath $v_3\ldots v_6$ has an $(L:3)$-coloring.
Then $P$ is $(L:3)$-colorable.
Moreover, for any color $\alpha\in L(v_1)$, the path $P$ has an $(L:3)$-coloring
$\varphi$ such that $\alpha\in \varphi(v_1)$.
\end{lemma}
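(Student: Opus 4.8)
The plan is to follow the same pattern as in Lemmas~\ref{3-3-4-3} and~\ref{3-3-4-4-3}: reduce the statement to a pure greedy-coloring problem by a sequence of two applications of Lemma~\ref{lemma-redulist}, the first one pinning the prescribed color $\alpha$ on $v_1$ and setting up the far end of the path so that the tail can absorb the constraints, the second one pinning down one color on each of the ``long-list'' vertices to shrink the lists enough for Lemma~\ref{lemma-greedy} to apply in the order $v_3,v_4,v_5,v_6,v_2,v_1$.

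Concretely, first I would record that since $v_3\ldots v_6$ is $(L:3)$-colorable we have $L(v_3)\neq L(v_4)$, $L(v_4)\neq L(v_5)$, $L(v_5)\neq L(v_6)$. I would then fix $\alpha\in L(v_1)$ and choose an auxiliary partial $L$-set coloring $\psi$ that puts $\{\alpha\}$ on $v_1$, $\emptyset$ on $v_2$, and single colors $\varepsilon_3\in L(v_3)\setminus L(v_4)$, $\varepsilon_4\in L(v_4)\setminus L(v_5)$, $\varepsilon_5\in L(v_5)\setminus L(v_6)$ on $v_3,v_4,v_5$, and finally two colors on $v_6$ chosen to avoid $\varepsilon_5$ (possible since $|L(v_6)|=5$); this is a valid $L$-set coloring with $|\psi(v_i)|\le f(v_i)$ for the weights $f'(v_1)=f'(v_3)=f'(v_4)=f'(v_5)=2$, $f'(v_2)=3$, $f'(v_6)=1$. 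Applying Lemma~\ref{lemma-redulist} reduces the problem to finding an $(L':f')$-coloring where $|L'(v_1)|=4$, $|L'(v_2)|=5$, $|L'(v_3)|=|L'(v_4)|=|L'(v_5)|=4$, $|L'(v_6)|=2$ (using that each $\psi(u)$ removes its colors from its neighbors). A second application of Lemma~\ref{lemma-redulist}, choosing $\kappa_3\in L'(v_3)\setminus L'(v_4)$, $\kappa_4\in L'(v_4)\setminus L'(v_5)$, $\kappa_5\in L'(v_5)\setminus L'(v_6)$, and $\kappa_2\in L'(v_2)\setminus(\{\kappa_3\}\cup L'(v_1))$ with weights $f''(v_1)=f''(v_2)=2$ and $f''(v_3)=f''(v_4)=f''(v_5)=f''(v_6)=1$, then leaves the task of finding an $(L'':f'')$-coloring with $|L''(v_1)|=|L''(v_2)|=4$, $|L''(v_3)|=1$, $|L''(v_4)|=|L''(v_5)|=2$, $|L''(v_6)|=2$; this follows by coloring greedily in the order $v_3,v_4,v_5,v_6,v_2,v_1$ via Lemma~\ref{lemma-greedy}, since each vertex has list size at least its own weight plus the weights of its already-colored neighbors.

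The main thing to watch is bookkeeping: I must make sure the auxiliary colors $\varepsilon_i$ and $\kappa_i$ are actually available at the moment they are chosen and that after each reduction the claimed list sizes are correct lower bounds. The delicate point is that the colors $\varepsilon_3,\varepsilon_4,\varepsilon_5$ (resp.\ $\kappa_3,\kappa_4,\kappa_5$) are removed from both endpoints of the corresponding edge, so I need the ``$\setminus L(v_{i+1})$'' choices to guarantee that the removal on $v_{i+1}$ costs nothing new there while still costing exactly one on $v_i$; this is exactly why each such color is picked outside the next vertex's list, mirroring the earlier lemmas. Unlike Lemma~\ref{3-3-4-4-3}, no exceptional configuration of the form $\{\beta\}=L(v_4)\setminus L(v_3)$ arises here, because we are not prescribing a color on $v_6$, so the tail end has a full extra degree of freedom and the greedy step goes through unconditionally; this is why the statement needs only ``for any color $\alpha\in L(v_1)$'' with no side condition.
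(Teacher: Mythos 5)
Your reduction scheme has two genuine gaps, both in the bookkeeping you flag as "the main thing to watch." First, the list sizes you claim after the first application of Lemma~\ref{lemma-redulist} are not justified: since each $\varepsilon_i$ is chosen only to avoid the \emph{next} list, nothing prevents $\varepsilon_4\in L(v_3)$, $\varepsilon_5\in L(v_4)$, or the two colors placed on $v_6$ from lying in $L(v_5)$. The guaranteed bounds are therefore only $|L'(v_3)|\ge 3$, $|L'(v_4)|\ge 3$, $|L'(v_5)|\ge 2$, not $4,4,4$. (In the paper's proof of Lemma~\ref{3-3-4-4-3}, this is exactly why $\gamma$ is taken in $L(v_4)\setminus L(v_3)$, i.e.\ avoiding the \emph{previous} list, and why $\beta,\beta'$ are chosen so that at most one of them lies in $L(v_4)$.) Second, the colors $\kappa_3\in L'(v_3)\setminus L'(v_4)$, $\kappa_4\in L'(v_4)\setminus L'(v_5)$, $\kappa_5\in L'(v_5)\setminus L'(v_6)$ need not exist: even under your claimed sizes $|L'(v_3)|=|L'(v_4)|$, the residual lists can coincide. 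Concretely, with $L(v_3)=\{1,\dots,5\}$, $L(v_4)=\{1,2,3,4,6\}$, $L(v_5)=\{1,2,3,4,7\}$, $L(v_6)=\{1,2,3,4,8\}$ you are forced to $\varepsilon_3=5$, $\varepsilon_4=6$, $\varepsilon_5=7$, giving $L'(v_3)=L'(v_4)=\{1,2,3,4\}$, so $\kappa_3$ does not exist. In Lemma~\ref{3-3-4-4-3} the analogous choices are legitimate only because the strict size chain $|L'(v_3)|=4>|L'(v_4)|=3>|L'(v_5)|=2$ forces the set differences to be nonempty; you have no such chain, and with the correct lower bounds the greedy step fails as well.

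The deeper issue is your closing claim that no exceptional configuration arises. In fact the same tension you would meet in any attempted repair (namely that $\varepsilon_5$ must avoid $L(v_6)$ to free the tail, but should also avoid $L(v_4)$ to preserve $|L'(v_4)|$, and such a color need not exist) is precisely what the paper's proof confronts. The paper does not redo the two-step reduction; it picks $\beta\in L(v_5)\setminus L(v_6)$ with $\{\beta\}\neq L(v_4)\setminus L(v_3)$ — when $|L(v_4)\setminus L(v_3)|=1$ this requires using the given $(L:3)$-coloring $\psi$ of $v_3v_4v_5v_6$ to show $L(v_5)\setminus(\{\gamma\}\cup L(v_6))\neq\emptyset$ — then applies Lemma~\ref{3-3-4-4-3} to $P-v_6$ to prescribe $\alpha$ on $v_1$ and $\beta$ on $v_5$, and finally extends to $v_6$, which is possible because $\beta\notin L(v_6)$ leaves $|L(v_6)\setminus\varphi(v_5)|\ge 3$. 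Your proof would be fixed by following that route; as written, the two reductions do not go through.
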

\begin{proof}
Since $v_3v_4v_5v_6$ has an $(L:3)$-coloring $\psi$, we have $L(v_3)\neq L(v_4)\neq L(v_5)\neq L(v_6)$.
Furthermore, if $|L(v_4)\setminus L(v_3)|=1$, then $\psi(v_4)$ contains the unique color $\gamma\in L(v_4)\setminus L(v_3)$,
and thus $L(v_5)\setminus \{\gamma\}\not\subseteq L(v_6)$; in this case, let $\beta$ be an arbitrary color in
$L(v_5)\setminus (\{\gamma\}\cup L(v_6))$.  Otherwise, let $\beta$ be an arbitrary color in $L(v_5)\setminus L(v_6)$.

In either case, we have $\{\beta\}\neq L(v_4)\setminus L(v_3)$; hence, considering any $\alpha\in L(v_1)$,
$P-v_6$ has an $(L:3)$-coloring $\varphi$ such that $\alpha\in \varphi(v_1)$ and $\beta\in \varphi(v_5)$
by Lemma~\ref{3-3-4-4-3}.  Since $\beta\not\in L(v_6)$, we have $|L(v_6)\setminus\varphi(v_5)|\ge 3$,
and thus $\varphi$ can be extended to an $(L:3)$-coloring of $P$ by choosing $\varphi(v_6)$ as an
arbitrary $3$-element subset of $L(v_6)\setminus\varphi(v_5)$.
\end{proof}

\begin{lemma}\label{3-4-3---3}
Let $L$ be a list assignment for a path $P=v_1\ldots v_k$ with $5\le k\le 7$
such that $|L(v_1)|=|L(v_2)|=|L(v_4)|=\ldots=|L(v_k)|=5$, $|L(v_3)|=8$,
and $P-v_3$ has an $(L:3)$-coloring.
Then $P$ is $(L:3)$-colorable.
\end{lemma}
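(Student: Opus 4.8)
The plan is to reorganize $P$ around the subpath $Q:=v_2v_3\ldots v_k$, which has $k-1\in\{4,5,6\}$ vertices and in which the unique $8$-element list ($L(v_3)$) sits in the \emph{second} position. Thus $Q$ is precisely in the form treated by Lemma~\ref{3-3-4-3}, Lemma~\ref{3-3-4-4-3}, and Lemma~\ref{3-3-4-4-4-3} for $k=5$, $k=6$, and $k=7$ respectively (under the relabelling $v_i\mapsto v_{i-1}$ that sends $Q$ onto the path in the cited lemma). The hypotheses needed there transfer for free: the tail $v_4\ldots v_k$ of $Q$ is a connected component of $P-v_3$, and since $P-v_3$ is $(L:3)$-colorable so is $v_4\ldots v_k$, which is exactly the "$v_3\ldots$ has an $(L:3)$-coloring" assumption of the relevant lemma. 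Moreover, the other component of $P-v_3$ is the edge $v_1v_2$ with $|L(v_1)|=|L(v_2)|=5$; its $(L:3)$-colorability forces $L(v_1)\ne L(v_2)$, so we may fix a color $\alpha\in L(v_2)\setminus L(v_1)$.

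With this setup, first I would invoke the "moreover" clause of whichever of Lemmas~\ref{3-3-4-3}--\ref{3-3-4-4-4-3} applies: in each of them the color prescribed at the neighbour of the $8$-vertex (which is $v_2$ here) may be chosen arbitrarily from that vertex's list, so I can demand $\alpha\in\varphi(v_2)$. When the lemma additionally prescribes a color at the far endpoint $v_k$ (cases $k=5,6$), I just take any admissible value; this is possible because $|L(v_k)|=5$ while at most one value of that color is excluded (no value for $k=5$, the single color $L(v_{k-1})\setminus L(v_{k-2})$ for $k=6$, using the cited lemma's relabelled side condition). This yields an $(L:3)$-coloring $\varphi$ of $Q$ with $\alpha\in\varphi(v_2)$.

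It then remains only to color $v_1$, whose sole neighbour in $P$ is $v_2$. Since $\alpha\in\varphi(v_2)$ and $\alpha\notin L(v_1)$, we get $|L(v_1)\cap\varphi(v_2)|\le 2$, hence $|L(v_1)\setminus\varphi(v_2)|\ge 3$, so choosing $\varphi(v_1)$ to be any $3$-subset of $L(v_1)\setminus\varphi(v_2)$ extends $\varphi$ to the required $(L:3)$-coloring of $P$. The whole argument is bookkeeping once the reduction is spotted; the only points that need genuine care are verifying that the hypotheses of the three earlier lemmas really do carry over under the relabelling of $Q$ (in particular the inherited colorability of $v_4\ldots v_k$ and the minor side constraint on the far endpoint), and that $L(v_1)\ne L(v_2)$, which is what makes the pivotal color $\alpha$ available — there is no substantial combinatorial obstacle beyond this.
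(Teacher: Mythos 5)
Your proposal is correct and is essentially the paper's own proof: fix $\alpha\in L(v_2)\setminus L(v_1)$ (available since $v_1v_2$ is $(L:3)$-colorable), apply the "moreover" clauses of Lemmas~\ref{3-3-4-3}, \ref{3-3-4-4-3}, \ref{3-3-4-4-4-3} to $P-v_1$ to get $\alpha\in\varphi(v_2)$, then color $v_1$ greedily from $L(v_1)\setminus\varphi(v_2)$. Your extra care about the admissibility of the far-endpoint color $\beta$ for $k=5,6$ is a detail the paper passes over silently, but it changes nothing substantive.
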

\begin{proof}
Since $v_1v_2$ has an $(L:3)$-coloring, we have $L(v_1)\neq L(v_2)$, and thus there exists
a color $\alpha\in L(v_2)\setminus L(v_1)$.  By Lemmas~\ref{3-3-4-3}, \ref{3-3-4-4-3}, and \ref{3-3-4-4-4-3},
there exists an $(L:3)$-coloring $\varphi$ of $P-v_1$ such that $\alpha\in L(v_2)$.
Since $\alpha\not\in L(v_1)$, we have $|L(v_1)\setminus\varphi(v_2)|\ge 3$,
and thus $\varphi$ can be extended to an $(L:3)$-coloring of $P$ by choosing $\varphi(v_1)$ as an
arbitrary $3$-element subset of $L(v_1)\setminus\varphi(v_2)$.
\end{proof}

\begin{lemma}\label{triangle}
Let $L$ be a list assignment for a triangle $C=v_1v_2v_3$.
Then $C$ is $(L:3)$-colorable if and only if
$|L(v_i)|\ge 3$ for $1\le i\le 3$,
$|L(v_i)\cup L(v_j)|\ge 6$ for $1\le i < j\le 3$,
and $|L(v_1)\cup L(v_2)\cup L(v_3)|\ge 9$.
\end{lemma}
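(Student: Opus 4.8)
The plan is to prove the two implications separately, with the forward direction essentially immediate and the reverse direction reduced to Hall's theorem. For the ``only if'' direction I would simply take an $(L:3)$-coloring $\varphi$ of $C$: since $v_1v_2v_3$ is a triangle, $\varphi(v_1),\varphi(v_2),\varphi(v_3)$ are pairwise disjoint $3$-element sets contained in the respective lists, so $|L(v_i)|\ge|\varphi(v_i)|=3$, $|L(v_i)\cup L(v_j)|\ge|\varphi(v_i)\cup\varphi(v_j)|=6$ for $i<j$, and $|L(v_1)\cup L(v_2)\cup L(v_3)|\ge|\varphi(v_1)\cup\varphi(v_2)\cup\varphi(v_3)|=9$, which are exactly the three stated inequalities.

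For the ``if'' direction, the approach is to encode an $(L:3)$-coloring as a matching. I would form an auxiliary bipartite graph $H$ whose left side consists of nine ``slots'', namely three copies $v_i^1,v_i^2,v_i^3$ of each $v_i$, whose right side is $L(v_1)\cup L(v_2)\cup L(v_3)$, and in which $v_i^t$ is joined to every color of $L(v_i)$. A matching of $H$ saturating all nine slots corresponds exactly to an $(L:3)$-coloring of $C$: assign to $v_i$ the three colors matched to its copies (distinct because a matching is injective, and disjoint across vertices for the same reason). By Hall's theorem it then suffices to check $|N_H(S)|\ge|S|$ for every set $S$ of slots. Since all copies of a fixed $v_i$ have the same neighborhood $L(v_i)$, writing $a_i$ for the number of copies of $v_i$ in $S$ and $T=\{i:a_i>0\}$, we have $N_H(S)=\bigcup_{i\in T}L(v_i)$ and $|S|=a_1+a_2+a_3$, and for fixed $T$ the requirement is hardest when $a_i=3$ for all $i\in T$. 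Thus Hall's condition is equivalent to $\bigl|\bigcup_{i\in T}L(v_i)\bigr|\ge 3|T|$ for every nonempty $T\subseteq\{1,2,3\}$, i.e.\ to the first, second, and third hypothesis for $|T|=1,2,3$. Hence the saturating matching exists and $C$ is $(L:3)$-colorable.

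The only genuinely delicate point is this last reduction of Hall's condition to the three listed inequalities --- in particular the observation that, among all slot-subsets with a prescribed active vertex set $T$, the binding one uses all three copies of each active vertex --- but this is just monotonicity of both sides in the $a_i$. A fully constructive alternative, choosing the three color sets greedily after a case analysis on the sizes of the pairwise intersections $L(v_i)\cap L(v_j)$ and of $L(v_1)\cap L(v_2)\cap L(v_3)$, is also available but noticeably more tedious, so I would present the Hall-theoretic argument.
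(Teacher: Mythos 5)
Your proposal is correct and follows essentially the same route as the paper: the forward direction via disjointness of the color sets, and the reverse direction via the same auxiliary bipartite graph of nine vertex-copies versus colors, with Hall's theorem applied after reducing Hall's condition to the three stated inequalities. The paper leaves the Hall-condition verification implicit, while you spell it out, but the argument is identical in substance.
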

\begin{proof}
If $\varphi$ is an $(L:3)$-coloring of $C$ and $S$ is a subset of $V(C)$,
then $\varphi$ assigns pairwise disjoint sets to vertices of $S$, and
thus $\big|\bigcup_{v\in S} L(v)\big|\ge \big|\bigcup_{v\in S} \varphi(v)\big|=3|S|$,
proving that the conditions from the statement of the lemma are necessary.

Consider an auxiliary bipartite graph $H$ with one part $U$ consisting of
$L(v_1)\cup L(v_2)\cup L(v_3)$ and the other part $V$ consisting of
vertices $v_{i,k}$ for $1\le i,k\le 3$, with $c\in U$ adjacent to $v_{i,k}$
if and only if $c\in L(v_i)$.  Using Hall's theorem, the assumptions of the lemma
imply that $H$ has a matching saturating the vertices of $V$.
Letting $\varphi(v_i)$ consist of the colors joined to $v_{i,1}$, $v_{i,2}$, and $v_{i,3}$
in this matching for $1\le i\le 3$ gives an $(L:3)$-coloring of $C$.
\end{proof}

\begin{lemma}\label{lollipop}
Let $L$ be a list assignment for the graph $H$ consisting of a path $v_1v_2v_3v_4$ and an edge $v_1v_3$,
such that $|L(v_1)|=|L(v_4)|=5$, $|L(v_2)|=|L(v_3)|=8$,
and the triangle $v_1v_2v_3$ has an $(L:3)$-coloring.
Then $H$ is $(L:3)$-colorable.
\end{lemma}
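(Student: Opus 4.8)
The plan is to color $v_4$ first with a well-chosen $3$-element subset $C\subseteq L(v_4)$ and then extend to the triangle $v_1v_2v_3$, gluing the two pieces together with Lemma~\ref{lemma-redulist}. This is natural because precoloring $v_4$ by $C$ only deletes colors from the list of its unique neighbour $v_3$, shrinking $L(v_3)$ to $L(v_3)\setminus C$, which still has size at least $5$; after that it suffices to check that the triangle $v_1v_2v_3$ with the shortened list at $v_3$ still satisfies the hypotheses of Lemma~\ref{triangle}.

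Concretely, first I would invoke the reduction remark stated just before Lemma~\ref{3-3-3} to assume $|L(v_1)|=|L(v_4)|=5$ and $|L(v_2)|=|L(v_3)|=8$; this normalization is essential, since it limits how many ``harmful'' colors $L(v_4)$ can contain. From the hypothesis that the triangle is $(L:3)$-colorable I extract, via the necessity direction of Lemma~\ref{triangle}, the single fact I need: $|L(v_1)\cup L(v_2)\cup L(v_3)|\ge 9$.

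The crux is the choice of $C$. Set $B_1=L(v_3)\setminus L(v_1)$ and $B_2=L(v_3)\setminus(L(v_1)\cup L(v_2))\subseteq B_1$. Deleting a color of $C$ from $L(v_3)$ can cost a color of $L(v_1)\cup L(v_3)$ only if that color lies in $B_1$, and can cost a color of $L(v_1)\cup L(v_2)\cup L(v_3)$ only if it lies in $B_2$; so I would pick $C$ to meet $B_1$ in as few colors as possible and, subject to that, $B_2$ in as few colors as possible. Writing $y=|L(v_4)\cap B_1|$ and $x=|L(v_4)\cap B_2|\le y$, this greedy choice attains $|C\cap B_1|=\max(0,y-2)$ and $|C\cap B_2|=\max(0,x-2)$. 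Applying Lemma~\ref{lemma-redulist} with $\psi(v_4)=C$ and $\psi$ empty elsewhere, the demand at $v_4$ drops to $0$, the lists of $v_1,v_2$ are unchanged, and the list of $v_3$ becomes $L(v_3)\setminus C$, so it remains to verify that the triangle $v_1v_2v_3$ is colorable by Lemma~\ref{triangle}. Its only nonobvious conditions are $|L(v_1)\cup(L(v_3)\setminus C)|\ge 6$ and $|L(v_1)\cup L(v_2)\cup(L(v_3)\setminus C)|\ge 9$ (the others follow from $|L(v_2)|=8$), and since $|L(v_1)\cup L(v_3)|\ge\max(8,5+y)$ and $|L(v_1)\cup L(v_2)\cup L(v_3)|\ge\max(9,8+x)$, these reduce to checking $\max(8,5+y)-\max(0,y-2)\ge 6$ and $\max(9,8+x)-\max(0,x-2)\ge 9$, a short case check over $x,y\in\{0,1,\ldots,5\}$.

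The step requiring real care---the main obstacle---is exactly the selection of $C$: a naive choice can fail (e.g.\ if $L(v_4)\subseteq L(v_3)$ and $C$ misses $L(v_1)$, the list of $v_3$ shrinks too much for Lemma~\ref{triangle}), and it is the interplay of the greedy rule with the exact-size normalization---which bounds $x$ and $y$ against the slack available in $|L(v_1)\cup L(v_3)|$ and $|L(v_1)\cup L(v_2)\cup L(v_3)|$---that makes those two inequalities hold. Everything else (checking that $\psi$ is an $L$-set coloring, unwinding Lemma~\ref{lemma-redulist}, and the final appeal to Lemma~\ref{triangle}) is routine.
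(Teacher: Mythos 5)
Your proposal is correct and follows essentially the same route as the paper: normalize the list sizes, extract $|L(v_1)\cup L(v_2)\cup L(v_3)|\ge 9$ from the necessity direction of Lemma~\ref{triangle}, color $v_4$ first by a suitable $3$-subset of $L(v_4)$, and finish by applying Lemma~\ref{triangle} to the triangle with the reduced list at $v_3$. The only difference is in how $\varphi(v_4)$ is chosen: the paper simply protects two witness colors $\alpha\in(L(v_1)\cup L(v_3))\setminus L(v_2)$ and $\beta\in L(v_3)\setminus L(v_1)$ by taking $\varphi(v_4)\subseteq L(v_4)\setminus\{\alpha,\beta\}$, which replaces your greedy minimization of $|C\cap B_1|$, $|C\cap B_2|$ and the ensuing case check with a one-line verification.
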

\begin{proof}
Since $v_1v_2v_3$ is $(L:3)$-colorable, we have $|L(v_1)\cup L(v_2)\cup L(v_3)|\ge 9$ by Lemma~\ref{triangle},
and thus there exists a color $\alpha\in (L(v_1)\cup L(v_3))\setminus L(v_2)$.
Let $\beta$ be a color in $L(v_3)\setminus L(v_1)$.
Let $\varphi(v_4)$ be any $3$-element subset of $L(v_4)\setminus\{\alpha,\beta\}$.
Let $L'(v_3)=L(v_3)\setminus \varphi(v_4)$, $L'(v_1)=L(v_1)$ and $L'(v_2)=L(v_2)$.
Note that $\beta\in L'(v_3)\setminus L'(v_1)$, and thus $|L'(v_1)\cup L'(v_3)|\ge 6$.
Furthermore, $\alpha\in (L'(v_1)\cup L'(v_3))\setminus L'(v_2)$, and thus
$|L'(v_1)\cup L'(v_2)\cup L'(v_3)|\ge |L'(v_2)|+1=9$.  By Lemma~\ref{triangle},
$v_1v_2v_3$ has an $(L':3)$-coloring, and this coloring extends $\varphi$ to an $(L:3)$-coloring of $H$.
\end{proof}

\begin{lemma}\label{l6cycle}
Let $L$ be a list assignment for a $6$-cycle $C=v_1\ldots v_6$, such that
$|L(v_i)|\ge 5$ for $1\le i\le 6$.  Suppose that there exists $S\subseteq V(C)$
such that $|S|=2$, $|L(u)|=8$ for all $u\in S$, and $C-S$ is $(L:3)$-colorable.
Then $C$ is $(L:3)$-colorable.
\end{lemma}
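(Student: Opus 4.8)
\medskip\noindent\textit{Proof idea.}
By the observation following Lemma~\ref{lemma-greedy} (applied with the subgraph $C-S$ and $f\equiv 3$) we may assume that $|L(v_i)|=5$ for every $v_i\notin S$ and $|L(u)|=8$ for each $u\in S$. Up to the dihedral symmetry of $C$, the two vertices of $S$ are adjacent, at distance $2$, or at distance $3$; fix one of them, $u$. Then $C-u$ is a path $P=x_1x_2x_3x_4x_5$ whose endpoints $x_1$ and $x_5$ are the two $C$-neighbors of $u$ and which contains the other vertex $w$ of $S$ at position $1$, $2$ or $3$, respectively. If $\varphi$ is an $(L:3)$-coloring of $P$ with $|(\varphi(x_1)\cup\varphi(x_5))\cap L(u)|\le 5$, then $|L(u)\setminus(\varphi(x_1)\cup\varphi(x_5))|\ge 3$ and $\varphi$ extends to $C$. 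So it suffices to color $P$ so that its two endpoints together use at most five colors of $L(u)$, and there are two generic ways to force this: put a common color of $L(x_1)$ and $L(x_5)$ into both $\varphi(x_1)$ and $\varphi(x_5)$, so that $|\varphi(x_1)\cup\varphi(x_5)|\le 5$; or put a color outside $L(u)$ into $\varphi(x_1)$ or $\varphi(x_5)$ --- the latter being automatic when $L(x_1)\cap L(x_5)=\emptyset$, since then $|L(x_1)\cup L(x_5)|=10>8=|L(u)|$.

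When the vertices of $S$ are at distance $2$, we have $|L(x_2)|=8$ and the subpath $x_3x_4x_5$ is a component of $C-S$, hence $(L:3)$-colorable, so Lemma~\ref{3-3-4-4-3} applies; its ``moreover'' clause lets us prescribe any $\alpha\in L(x_1)$ in $\varphi(x_1)$ and any $\beta\in L(x_5)$ with $\{\beta\}\neq L(x_4)\setminus L(x_3)$ in $\varphi(x_5)$, and one checks that a choice of $\alpha$ and $\beta$ of one of the two types above is always available (the excluded singleton matters only when $|L(x_4)\setminus L(x_3)|=1$, and in that case one uses a color outside $L(u)$, which is present unless $L(x_1)\cap L(x_5)$ contains a second, admissible color). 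When the vertices of $S$ are at distance $3$, we have $|L(x_3)|=8$ and $P-x_3=C-S$, so Lemma~\ref{3-4-3---3} already gives that $P$ is $(L:3)$-colorable; since that lemma carries no prescription clause, we first re-derive it with the endpoint colors tracked: deleting $x_1$ brings $x_2x_3x_4x_5$ into the setting of Lemma~\ref{3-3-4-3}, whose ``moreover'' clause prescribes $\alpha\in\varphi(x_2)$ with $\alpha\in L(x_2)\setminus L(x_1)$ (nonempty, since $L(x_1)\neq L(x_2)$) and any $\nu\in L(x_5)$ in $\varphi(x_5)$; then $|L(x_1)\setminus\varphi(x_2)|\ge 3$ and any $\mu\in L(x_1)\setminus L(x_2)$ may be placed into $\varphi(x_1)$. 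Using also the mirror statement (with $\mu\in L(x_1)$ arbitrary and $\nu\in L(x_5)\setminus L(x_4)$), the same kind of case analysis as in the distance-$2$ case produces the required coloring, the few degenerate configurations being settled by instead coloring the large-list vertex $x_3$ first and then the two pendant edges.

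The distance-$1$ case, where $w=x_1$ has $|L(x_1)|=8$ at an endpoint of $P$ and no path lemma applies directly, needs the most care. We first color $w$ by a $3$-set $\varphi(w)\subseteq L(w)\setminus L(x_2)$ --- possible since $|L(w)|-|L(x_2)|=3$ --- after which $P-w=x_2x_3x_4x_5$, a component of $C-S$, is still $(L:3)$-colorable, and any such coloring completes $P$. If moreover $L(w)\not\subseteq L(x_2)\cup L(u)$, then $\varphi(w)$ can be chosen to contain a color outside $L(u)$, so $|\varphi(w)\cap L(u)|\le 2$ and the extension to $u$ succeeds. If $L(w)\subseteq L(x_2)\cup L(u)$, the symmetric argument (coloring $u$ first and extending to $w$) works unless also $L(u)\subseteq L(x_5)\cup L(w)$; in this last configuration one colors $C-S$ first and then the edge $uw$, exploiting the freedom in the coloring of the path $C-S$ --- choosing it so that the residual lists $L(x_2)\setminus\varphi(x_2)$ and $L(x_5)\setminus\varphi(x_5)$ do not coincide --- to ensure that the reduced lists $L(u)\setminus\varphi(x_5)$ and $L(w)\setminus\varphi(x_2)$ have union of size at least $6$, which lets us color $uw$.

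The main obstacle is exactly this bookkeeping in the nested subcases: one has to rule out every configuration in which the lists are so tightly nested that neither a shared color of the two endpoint lists nor a color avoiding an $S$-vertex's list is available, falling back in each such configuration on the opposite end of $P$, on the other vertex of $S$, or on the slack still present in a coloring of $C-S$.
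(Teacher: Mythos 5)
Your overall skeleton is the same as the paper's: normalize the list sizes, split according to the distance between the two $8$-list vertices, delete one of them ($u$), color the remaining path with prescribed colors at its endpoints so that either a common color appears at both neighbors of $u$ or a color outside $L(u)$ is used, and extend to $u$. Your distance-$2$ case is essentially the paper's $t=3$ case and, modulo the slightly garbled existence check for the pair $(\alpha,\beta)$, it is complete. The problem is that in the other two cases you dispose of exactly the hard sub-cases with one-clause assertions, and those sub-cases are where the entire difficulty of the lemma sits.

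Concretely: in the adjacent case, after the two easy escapes you arrive at the doubly nested configuration $L(w)\subseteq L(x_2)\cup L(u)$, $L(u)\subseteq L(x_5)\cup L(w)$, and you claim one can color $C-S$ ``exploiting the freedom'' so that the residual lists at $u$ and $w$ do not end up being the same $5$-set (also note the relevant residuals are $L(u)\setminus\varphi(x_5)$ and $L(w)\setminus\varphi(x_2)$, not $L(x_2)\setminus\varphi(x_2)$ and $L(x_5)\setminus\varphi(x_5)$). You give no argument that such a path coloring exists: the endpoint sets of a coloring of $x_2x_3x_4x_5$ are constrained through the interior vertices (e.g.\ $\varphi(x_2)$ can be forced once $\varphi(x_3)$ is fixed), so avoiding the one bad coincidence is not a free choice; this is precisely what the paper's $t=2$ case works for, via two reductions, a pre-chosen escape color $\kappa$, and the $(2:1)$-choosability of the $4$-cycle. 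Similarly, in the distance-$3$ case your main line breaks down when $L(x_1),L(x_5)\subseteq L(u)$ and every color of $L(x_1)\cap L(x_5)$ lies in $L(x_2)\cap L(x_4)$, and your fallback ``color $x_3$ first and then the two pendant edges'' does not work as stated: if $L(w)\subseteq L(x_2)\cup L(x_4)$ with $|L(x_2)\cap L(x_4)|=2$, no $3$-set $\varphi(w)\subseteq L(w)$ meets both $L(x_2)$ and $L(x_4)$ in at most one color, so after coloring $w$ you cannot in general keep a prescribed common color $\gamma\in L(x_1)\cap L(x_5)$ out of both $\varphi(x_2)$ and $\varphi(x_4)$ while leaving $x_2$ and $x_4$ colorable; one has to treat $u$ and $w$ simultaneously, choosing two distinct common colors (one for the pair $x_1,x_5$ and one for the pair $x_2,x_4$), which is the content of the nested sub-case of the paper's $t=4$ argument ($\alpha'\in L'(v_2)\cap L'(v_6)$, $\beta'\in L'(v_3)\cap L'(v_5)$, $\alpha'\neq\beta'$). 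You yourself flag this bookkeeping as ``the main obstacle'' but do not carry it out, so as written the proof has genuine gaps in the adjacent and distance-$3$ cases.
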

\begin{proof}
Without loss of generality, we can assume $|L(v)|=5$ for $v\in V(C)\setminus S$
and $S=\{v_1,v_t\}$ for some $t\in\{2,3,4\}$.  Let us discuss the possible values of
$t$ separately.
\begin{itemize}
\item Suppose first that $t=2$.  Since $C-S$ is $(L:3)$-colorable, we have
$L(v_3)\neq L(v_4)\neq L(v_5)\neq L(v_6)$, and furthermore, if $|L(v_4)\setminus L(v_3)|=1$
and $|L(v_5)\setminus L(v_6)|=1$, then $L(v_4)\setminus L(v_3)\neq L(v_5)\setminus L(v_6)$.
Select $\beta\in L(v_4)\setminus L(v_3)$ such that $|L(v_5)\setminus (\{\beta\}\cup L(v_6))|\ge 1$,
and let $\gamma\in L(v_5)\setminus (\{\beta\}\cup L(v_6))$ be arbitrary.
Then, select $\beta'\in L(v_4)\setminus\{\beta,\gamma\}$ so that at most one of $\beta$ and $\beta'$ belongs to $L(v_5)$,
and $\gamma'\in L(v_5)\setminus \{\beta,\beta',\gamma\}$ so that at most one of $\gamma$ and $\gamma'$ belongs to $L(v_4)$.
Furthermore, arbitrarily select $\alpha\in L(v_3)\setminus L(v_4)$ and $\varepsilon\in L(v_6)\setminus L(v_5)$.
Note that assignment of sets $\emptyset$, $\emptyset$,
$\{\alpha\}$, $\{\beta,\beta'\}$, $\{\gamma,\gamma'\}$, $\{\varepsilon\}$ to vertices of $C$ in order
is a set coloring.
Let $f'(v_1)=f'(v_2)=3$, $f'(v_3)=f'(v_6)=2$, and $f'(v_4)=f'(v_5)=1$.
By Lemma~\ref{lemma-redulist}, it suffices to prove that $C$ has an $(L':f')$-coloring for any list assignment $L'$ such that
$|L'(v_1)|=|L'(v_2)|=7$, $|L'(v_3)|=|L'(v_6)|=3$, and $|L'(v_4)|=|L'(v_5)|=2$.

Choose $\alpha'\in L'(v_3)\setminus L'(v_4)$ and $\varepsilon'\in L'(v_6)\setminus L'(v_5)$.
Let $f''(v_1)=f''(v_2)=3$ and $f''(v_3)=\ldots=f''(v_6)=1$.  Applying Lemma~\ref{lemma-redulist} again,
it suffices to prove that $C$ has an $(L'':f'')$-coloring for any list assignment $L''$ such that
$|L''(v_1)|=|L''(v_2)|=6$ and $|L''(v_3)|=\ldots=|L''(v_6)|=2$.  If $L''(v_1)\neq L''(v_2)$,
then let $\kappa$ be a color in $L''(v_1)\setminus L''(v_2)$, and let $\varphi$ be an $(L'':f'')$-coloring
of the path $v_6v_5v_4v_3$ such that $\varphi(v_6)\neq \{\kappa\}$, obtained greedily.
If $L''(v_1)=L''(v_2)$, then let $\varphi$ be an $(L'':f'')$-coloring
of the path $v_6v_5v_4v_3$ such that $\varphi(v_3)\neq\varphi(v_6)$, which exists, since a $4$-cycle
is $(2:1)$-choosable~\cite{erdosrubintaylor1979}.
In either case, the choice of $\varphi$ ensures that if $L''(v_1)\setminus\varphi(v_6)$ and $L''(v_2)\setminus \varphi(v_3)$
both have size $5$, then they are different.  Hence, we can choose $\varphi(v_1)$ and $\varphi(v_2)$ as disjoint
$3$-element subsets of $L''(v_1)\setminus\varphi(v_6)$ and $L''(v_2)\setminus \varphi(v_3)$, respectively.
This gives an $(L'':f'')$-coloring of $C$, as required.

\item  Next, suppose that $t=3$.  If $L(v_2)\not\subseteq L(v_1)$, then choose $\alpha\in L(v_2)\setminus L(v_1)$
and $\beta\in L(v_6)$ arbitrarily so that $L(v_5)\setminus L(v_4)\neq\{\beta\}$.  If $L(v_2)\subseteq L(v_1)$,
then note that $|L(v_6)\setminus (L(v_1)\setminus L(v_2))|\ge 2$, and thus we can choose $\beta\in L(v_6)\setminus (L(v_1)\setminus L(v_2))$
so that $L(v_5)\setminus L(v_4)\neq\{\beta\}$.  In this case, if $\beta\in L(v_2)$ then let $\alpha=\beta$, otherwise
choose $\alpha\in L(v_2)$ arbitrarily.  By Lemma~\ref{3-3-4-4-3}, the path $v_2v_3\ldots v_6$ has an $(L:3)$-coloring $\varphi$
such that $\alpha\in \varphi(v_2)$ and $\beta\in \varphi(v_6)$.  By the choice of $\alpha$ and $\beta$ we have
$|L(v_1)\setminus (\varphi(v_2)\cup \varphi(v_6))|\ge 3$, and thus we can extend $\varphi$ to an $(L:3)$-coloring of $C$
by choosing $\varphi(v_1)$ as a $3$-element subset of $L(v_1)\setminus (\varphi(v_2)\cup \varphi(v_6))$.

\item Finally, suppose that $t=4$.  Since $C-S$ is $(L:3)$-colorable, we have
$L(v_2)\neq L(v_3)$ and $L(v_5)\neq L(v_6)$.  Hence, there exist $\alpha\in L(v_2)\setminus L(v_3)$, $\beta\in L(v_3)\setminus L(v_2)$,
$\gamma\in L(v_5)\setminus L(v_6)$, and $\varepsilon\in L(v_6)\setminus L(v_5)$.
Let $f'(v_1)=f'(v_4)=3$ and $f'(v_2)=f'(v_3)=f'(v_5)=f'(v_6)=2$.  By Lemma~\ref{lemma-redulist}, it suffices to prove that $C$ has an $(L':f')$-coloring for any list assignment $L'$ such that
$|L'(v_1)|=|L'(v_4)|=6$ and  $|L'(v_2)|=|L'(v_3)|=|L'(v_5)|=|L'(v_6)|=4$.

Suppose first that there exists a color $\alpha'\in L'(v_2)\setminus L'(v_1)$.  Since $|L'(v_5)|+|L'(v_3)\setminus\{\alpha'\}|>|L'(v_4)|$,
there exist colors $\beta'\in L'(v_3)\setminus\{\alpha'\}$ and $\gamma'\in L'(v_5)$ such that either $\beta'=\gamma'$ or at most one of $\beta'$ and $\gamma'$ belongs to $L'(v_4)$.
Let $f''(v_1)=f''(v_4)=3$, $f''(v_2)=f''(v_3)=f''(v_5)=1$, and $f''(v_6)=2$.
By Lemma~\ref{lemma-redulist}, it suffices to prove that $C$ has an $(L'':f'')$-coloring for any list assignment $L''$ such that
$|L''(v_1)|=6$, $|L''(v_4)|=5$, $|L''(v_2)|=|L''(v_3)|=2$, and $|L''(v_5)|=|L''(v_6)|=3$.
This is the case by coloring the vertices of $C$ greedily in order $v_2$, $v_3$, $v_5$, $v_6$, $v_1$, and $v_4$.

Hence, we can assume that $L'(v_2)\subset L'(v_1)$, and by symmetry also $L'(v_6)\subset L'(v_1)$ and $L'(v_3),L'(v_5)\subset L'(v_4)$.
Since $|L'(v_2)|+|L'(v_6)|-|L'(v_1)|=2$, it follows that $|L'(v_2)\cap L'(v_6)|\ge 2$, and symmetrically $|L'(v_3)\cap L'(v_5)|\ge 2$.
Hence, there exist $\alpha'\in L'(v_2)\cap L'(v_6)$ and $\beta'\in L'(v_3)\cap L'(v_5)$ such that $\alpha'\neq\beta'$.
Let $f''(v_1)=f''(v_4)=3$ and $f''(v_2)=f''(v_3)=f''(v_5)=f''(v_6)=1$.
By Lemma~\ref{lemma-redulist}, it suffices to prove that $C$ has an $(L'':f'')$-coloring for any list assignment $L''$ such that
$|L''(v_1)|=|L''(v_4)|=5$ and $|L''(v_2)=|L''(v_3)|=|L''(v_5)|=|L''(v_6)|=2$.
This is again the case by coloring the vertices of $C$ greedily in order $v_2$, $v_3$, $v_5$, $v_6$, $v_1$, and $v_4$.
\end{itemize}
\end{proof}

\begin{lemma}\label{claw}
Let $L$ be a list assignment for the graph $H$ consisting of a vertex $v$ with three neighbors $v_1$, $v_2$, and $v_3$,
such that $|L(v_i)|=5$ for $1\le i\le 3$ and $|L(v)|=8$.
Then $H$ is $(L:3)$-colorable.
\end{lemma}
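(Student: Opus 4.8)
The plan is to reduce the statement to a simple counting argument on the list of the center $v$. Since $v_1$, $v_2$, and $v_3$ are pairwise non-adjacent, the only constraints are between $v$ and each leaf. Hence it suffices to find a $3$-element subset $S\subseteq L(v)$ such that $|L(v_i)\setminus S|\ge 3$ for $i\in\{1,2,3\}$: having fixed $\varphi(v)=S$, we may then color each $v_i$ with an arbitrary $3$-element subset of $L(v_i)\setminus S$, which together with $\varphi(v)=S$ gives an $(L:3)$-coloring of $H$.

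Next I would count the $3$-element subsets of $L(v)$ that fail the requirement for a single leaf. Because $|L(v_i)|=5$, the inequality $|L(v_i)\setminus S|\ge 3$ fails for a $3$-element set $S\subseteq L(v)$ if and only if $S\subseteq L(v_i)$, and the number of such $S$ is $\binom{|L(v)\cap L(v_i)|}{3}\le\binom{5}{3}=10$. Summing over the three leaves, at most $30$ of the $\binom{8}{3}=56$ three-element subsets of $L(v)$ are unusable, so at least $26$ admissible choices of $S$ remain; picking any one of them and extending to the leaves as above completes the proof.

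The argument has essentially no hard step — it is just a crude union bound over the three leaves — so the only point requiring care is the observation that the failure of $|L(v_i)\setminus S|\ge 3$ is equivalent to the containment $S\subseteq L(v_i)$, which uses $|L(v_i)|=5$ in an essential way. One could alternatively phrase the same computation through Lemma~\ref{lemma-redulist}, first committing to $\varphi(v)$ and then applying Lemma~\ref{lemma-greedy} to color the leaves greedily, but the direct counting seems cleanest and makes the slack in the hypotheses transparent.
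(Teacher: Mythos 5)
Your proof is correct and takes essentially the same approach as the paper: both fix $\varphi(v)$ first as a $3$-element subset $S\subseteq L(v)$ with $S\not\subseteq L(v_i)$ for each $i$ (equivalently $|L(v_i)\setminus S|\ge 3$), and then color the leaves freely. The only difference is how the existence of such an $S$ is verified---the paper picks colors $\alpha_i\in L(v)\setminus L(v_i)$ explicitly and takes $S\supseteq\{\alpha_1,\alpha_2,\alpha_3\}$, while you use a union bound over the $\binom{8}{3}$ candidate sets; this is an inessential variation.
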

\begin{proof}
For $1\le i\le 3$, there exists $\alpha_i\in L(v)\setminus L(v_i)$.
Fix $\varphi(v)$ as a $3$-element subset of $L(v)$ containing $\alpha_1$, $\alpha_2$, and $\alpha_3$.
Then $|L(v_i)\setminus\varphi(v)|\ge 3$ for $1\le i\le 3$, and thus $\varphi$ extends to an $(L:3)$-coloring of $H$.
\end{proof}

\begin{lemma}\label{claw5}
Let $L$ be a list assignment for the graph $H$ consisting of a vertex $v$ with four neighbors $v_1$, \ldots, $v_4$,
and possibly the edge $v_3v_4$, such that $|L(v_1)|=|L(v_2)|=5$, $|L(v)|=8$ and either
\begin{itemize}
\item $v_3v_4\not\in E(H)$ and $|L(v_3)|=|L(v_4)|=5$, or
\item $v_3v_4\in E(H)$, $|L(v_3)|=|L(v_4)|=8$, and the triangle $vv_3v_4$ is $(L:3)$-colorable.
\end{itemize}
Then $H$ is $(L:3)$-colorable.
\end{lemma}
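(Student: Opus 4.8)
The plan is to handle the two cases from the hypothesis separately, in each case first committing to a well-chosen $3$-element set $\varphi(v)$ for the center $v$ and then extending it. In the first case $v_3v_4\notin E(H)$, so $H$ is the star $K_{1,4}$ with center $v$ and four leaves whose lists have size $5$. Here I would choose $\varphi(v)$ to be a $3$-element subset of $L(v)$ with $\varphi(v)\not\subseteq L(v_i)$ for every $i\in\{1,2,3,4\}$; this is possible by a counting argument, since a $3$-subset of $L(v)$ contained in some $L(v_i)$ must lie in $L(v)\cap L(v_i)$, a set of at most $5$ elements, giving at most $4\binom{5}{3}=40$ forbidden subsets among the $\binom{8}{3}=56$ candidates. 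For such $\varphi(v)$ we have $|\varphi(v)\cap L(v_i)|\le 2$, hence $|L(v_i)\setminus\varphi(v)|\ge 3$ for each $i$, and we finish by choosing each $\varphi(v_i)$ as an arbitrary $3$-element subset of $L(v_i)\setminus\varphi(v)$.

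In the second case $v_3v_4\in E(H)$, $|L(v_3)|=|L(v_4)|=8$, and the triangle $vv_3v_4$ is $(L:3)$-colorable, so Lemma~\ref{triangle} gives $|L(v)\cup L(v_3)\cup L(v_4)|\ge 9$; in particular $L(v)$, $L(v_3)$, $L(v_4)$ are not all equal. I would again pick a $3$-element subset $\varphi(v)$ of $L(v)$, now requiring $\varphi(v)\not\subseteq L(v_1)$, $\varphi(v)\not\subseteq L(v_2)$, and, in the subcase $L(v_3)=L(v_4)$, additionally $\varphi(v)\not\subseteq L(v_3)$ (note that then $L(v)\ne L(v_3)$, so $|L(v)\cap L(v_3)|\le 7$). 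The number of forbidden $3$-subsets of $L(v)$ is at most $2\binom{5}{3}=20$ in general and at most $2\binom{5}{3}+\binom{7}{3}=55$ in the degenerate subcase, both strictly below $\binom{8}{3}=56$, so a suitable $\varphi(v)$ exists.

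To extend $\varphi(v)$: since $\varphi(v)\not\subseteq L(v_i)$ we have $|L(v_i)\setminus\varphi(v)|\ge 3$ for $i\in\{1,2\}$, and because $v_1,v_2$ have no neighbor besides $v$ we may choose $\varphi(v_i)$ arbitrarily inside $L(v_i)\setminus\varphi(v)$. For the edge $v_3v_4$, the sets $L(v_3)\setminus\varphi(v)$ and $L(v_4)\setminus\varphi(v)$ each have size at least $5$, and their union $(L(v_3)\cup L(v_4))\setminus\varphi(v)$ has size at least $6$: if $L(v_3)\ne L(v_4)$ this follows from $|L(v_3)\cup L(v_4)|\ge 9$, and if $L(v_3)=L(v_4)$ it follows from the extra constraint imposed on $\varphi(v)$. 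Hence we can select disjoint $3$-element subsets $\varphi(v_3)$ and $\varphi(v_4)$ of them (take $\varphi(v_3)$ minimizing $|\varphi(v_3)\cap L(v_4)|$; a short count then gives $|L(v_4)\setminus(\varphi(v)\cup\varphi(v_3))|\ge 3$), which completes the $(L:3)$-coloring of $H$.

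The only genuinely delicate point will be the degenerate subcase $L(v_3)=L(v_4)$ in the second case: there the common list must retain size at least $6$ after deleting $\varphi(v)$, which is exactly why the hypothesis that the triangle $vv_3v_4$ is $(L:3)$-colorable is needed (it forces $L(v)\ne L(v_3)$, hence $|L(v)\cap L(v_3)|\le 7$), and why the counting has to come out as tightly as $55<56$ rather than with comfortable slack.
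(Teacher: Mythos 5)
Your proof is correct. In the first case it is essentially the paper's argument: the condition $\varphi(v)\not\subseteq L(v_i)$ is exactly the paper's requirement that $\varphi(v)$ meet each $L(v)\setminus L(v_i)$; you establish existence by counting forbidden $3$-subsets ($4\binom{5}{3}=40<56$) where the paper uses a pigeonhole argument on sets $A_i\subseteq L(v)\setminus L(v_i)$, but the idea is the same. In the second case you take a genuinely different route: the paper colors $v_1$ and $v_2$ first, choosing colors $\beta_1,\beta_2$ so that at least $3$ colors of $L(v)$ including a witness $\alpha\notin L(v_3)\cup L(v_4)$ survive, and then finishes by applying Lemma~\ref{triangle} (i.e.\ Hall's theorem) to the triangle $vv_3v_4$; you instead commit to $\varphi(v)$ first, chosen by a tight count of forbidden $3$-subsets ($\le 20$, or $\le 55<56$ in the subcase $L(v_3)=L(v_4)$, where the hypothesis that $vv_3v_4$ is $(L:3)$-colorable yields $|L(v)\cap L(v_3)|\le 7$), and then color $v_1,v_2$ and the edge $v_3v_4$ directly, the latter via the elementary fact that lists $A,B$ with $|A|,|B|\ge 3$ and $|A\cup B|\ge 6$ admit disjoint $3$-subsets. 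Your ordering avoids a second invocation of Lemma~\ref{triangle}, at the price of the near-exhaustive $55<56$ count; the paper's ordering keeps the case analysis on $L(v_3)$ versus $L(v_4)$ invisible by delegating it to the triangle lemma, which is arguably more robust but less self-contained. All counts in your argument check out (in particular $|(L(v_3)\cup L(v_4))\setminus\varphi(v)|\ge 6$ in both subcases, and the minimizing choice of $\varphi(v_3)$ does leave $|L(v_4)\setminus(\varphi(v)\cup\varphi(v_3))|\ge 3$), so no gap remains.
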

\begin{proof}
If $v_3v_4\not\in E(H)$, then let $A_i$ be a $3$-element subset of $L(v)\setminus L(v_i)$ for $1\le i\le 4$.
Since $\sum_{i=1}^4 |A_i|>|L(v)|$, there exists a color in $L(v)$ belonging to at least two of the sets $A_1$, \ldots, $A_4$.
Hence, there exists a $3$-element set $\varphi(v)\subset L(v)$ such that $\varphi(v)\cap A_i\neq \emptyset$ for $1\le i\le 4$.
Then $|L(v_i)\setminus \varphi(v)|\ge 3$, and thus $\varphi$ extends to an $(L:3)$-coloring of $H$.

If $v_3v_4\in E(H)$, then since $vv_3v_4$ is $(L:3)$-colorable, there exists a color $\alpha\in L(v)$
such that $|\{\alpha\}\cup L(v_3)\cup L(v_4)|\ge 9$.  Since $|L(v_1)\setminus \{\alpha\}|+|L(v_2)\setminus \{\alpha\}|>|L(v)\setminus\{\alpha\}|$,
there exist colors $\beta_1\in L(v_1)\setminus\{\alpha\}$ and $\beta_2\in L(v_2)\setminus\{\alpha\}$ such that either $\beta_1=\beta_2$ or
at most one of $\beta_1$ and $\beta_2$ belongs to $L(v)$.  For $i\in\{1,2\}$, let $\varphi(v_i)$ be any $3$-element subset of $L(v_i)\setminus\{\alpha\}$
containing $\beta_i$.  Then $L(v)\setminus (\varphi(v_1)\cup\varphi(v_2))$ has size at least $3$ and contains $\alpha$, and thus
$\varphi$ extends to an $(L:3)$-coloring of $H$ by Lemma~\ref{triangle}.
\end{proof}

\begin{lemma}\label{claw53}
Let $L$ be a list assignment for the graph $H$ consisting of a vertex $v$ with three neighbors $v_1$, $v_2$, and $v_3$,
a vertex $u_1$ adjacent to $v_1$, and possibly one edge between the vertices $v_1$, $v_2$, and $v_3$,
such that $|L(u_1)|=|L(v)|=5$ and $|L(v_i)|=2+3\deg_H(v_i)$ for $1\le i\le 3$.
If $H-v_1$ is $(L:3)$-colorable, then $H$ is $(L:3)$-colorable.
\end{lemma}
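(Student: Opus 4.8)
\emph{Proof sketch.}
The plan is to take an $(L:3)$-coloring $\varphi$ of $H-v_1$, possibly modify it on $\{v,v_2,v_3\}$, and then extend it to $v_1$. Since $\deg_H(u_1)=1$, the vertex $u_1$ is isolated in $H-v_1$, so the restriction of $\varphi$ to $J:=H[\{v,v_2,v_3\}]$ is an $(L:3)$-coloring of $J$, and conversely every $(L:3)$-coloring of $J$ extends to one of $H-v_1$ by coloring $u_1$ arbitrarily (possible as $|L(u_1)|=5$); hence the hypothesis is equivalent to $J$ being $(L:3)$-colorable. Write $N=N_H(v_1)\cap\{v,v_2,v_3\}$, so $v\in N$, $|N|\in\{1,2\}$, $\deg_H(v_1)=|N|+1$, and therefore $|L(v_1)|=3|N|+5$; if $|N|=2$ we may assume $N=\{v,v_2\}$ (so $J$ is the path $v_2vv_3$ and $|L(v_2)|=8$, $|L(v_3)|=5$), and if $|N|=1$ then $J$ is either the path $v_2vv_3$ with $|L(v_2)|=|L(v_3)|=5$ or the triangle $vv_2v_3$ with $|L(v_2)|=|L(v_3)|=8$.

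The first step is the extension. Given an $(L:3)$-coloring $\varphi$ of $J$, let $U=\bigcup_{w\in N}\varphi(w)$; the vertices of $N$ are pairwise adjacent in $H$, so $|U|=3|N|$, whence $M:=L(v_1)\setminus U$ satisfies $|M|\ge 5$, with equality exactly when $U\subseteq L(v_1)$. Put $T:=L(v_1)\setminus L(u_1)$. I claim that if $U\ne T$ then $\varphi$ extends to $H$: if $|M|\ge 6$, take any $3$-element $\varphi(u_1)\subseteq L(u_1)$; otherwise $|M|=5$ and $U\subseteq L(v_1)$, and $U\ne T$ forces $L(u_1)\not\subseteq M$ (as $L(u_1)\subseteq M$ would give $U\subseteq L(v_1)\setminus L(u_1)=T$, hence $U=T$ since $|U|=|T|$), so take $\varphi(u_1)\subseteq L(u_1)$ containing a color outside $M$; in either case $|M\setminus\varphi(u_1)|\ge 3$, and we finish with a $3$-element $\varphi(v_1)\subseteq M\setminus\varphi(u_1)$. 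So it suffices to produce an $(L:3)$-coloring $\varphi$ of $J$ with $\bigcup_{w\in N}\varphi(w)\ne T$. Since $|T|\ge(3|N|+5)-5=3|N|=|U|$, if $L(u_1)\not\subseteq L(v_1)$ then $|T|>|U|$ for every coloring and any $(L:3)$-coloring of $J$ works; from now on assume $L(u_1)\subseteq L(v_1)$, so $|T|=3|N|$.

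For $|N|=1$ the set $T$ has size $3$, and I would show that at least two distinct $3$-subsets of $L(v)$ occur as $\varphi(v)$ in some $(L:3)$-coloring of $J$, so one of them differs from $T$. In the path case a $3$-subset $S$ of $L(v)$ occurs iff $S\not\subseteq L(v_2)$ and $S\not\subseteq L(v_3)$; colorability of the edges $vv_2$, $vv_3$ gives $|L(v)\cap L(v_2)|\le 4$ and $|L(v)\cap L(v_3)|\le 4$, so at most $2\binom{4}{3}=8<\binom{5}{3}$ of the $3$-subsets of $L(v)$ fail. In the triangle case (by Lemma~\ref{triangle}) a $3$-subset $S$ occurs iff $|(L(v_2)\cup L(v_3))\setminus S|\ge 6$; writing $W=L(v_2)\cup L(v_3)$, a subset fails only when $|W|=8$ and $S\subseteq L(v)\cap W$, and colorability forces $|L(v)\cap W|\le 4$, so at most $\binom{4}{3}=4$ subsets fail. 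Either way at least two $3$-subsets of $L(v)$ occur.

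For $|N|=2$ the set $T$ has size $6$, $J$ is the path $v_2vv_3$ with $|L(v_2)|=8$, $|L(v)|=|L(v_3)|=5$, and it suffices to find an $(L:3)$-coloring of $J$ with $\varphi(v)\cup\varphi(v_2)\ne T$. Suppose not, so every $(L:3)$-coloring of $J$ satisfies $\varphi(v)\cup\varphi(v_2)=T$, in particular $\varphi(v)\subseteq T$. For every $3$-subset $S$ of $L(v)$ with $|L(v_3)\setminus S|\ge 3$ (i.e.\ such that $\varphi(v)=S$ occurs), every $3$-subset of $L(v_2)\setminus S$ arises as $\varphi(v_2)$ in some such coloring, hence lies in $T$; since $|L(v_2)\setminus S|\ge 5$ this forces $L(v_2)\setminus S\subseteq T$, i.e.\ $L(v_2)\subseteq T\cup S$. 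Picking $c\in L(v_2)\setminus T$ (possible as $|L(v_2)|=8>6=|T|$) now gives $c\in S\subseteq T$ for every such $S$ — impossible, since at least one such $S$ exists as $J$ is $(L:3)$-colorable. This contradiction completes all cases. The step I expect to be the main obstacle is recognizing that the unique obstruction to extending the coloring is the rigid set equality $U=T$, and then — in the $|N|=1$ case — ruling it out via the fact that the central vertex of $J$ cannot be color-forced; the remaining steps are routine counting.
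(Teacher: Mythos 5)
Your proof is correct, but it follows a genuinely different route from the paper's. You reduce everything to the induced graph $J=H[\{v,v_2,v_3\}]$, observe that an $(L:3)$-coloring $\varphi$ of $J$ extends over $v_1$ and $u_1$ unless the single rigid equality $\bigcup_{w\in N}\varphi(w)=L(v_1)\setminus L(u_1)$ holds (where $N=N_H(v_1)\cap\{v,v_2,v_3\}$), and then show by counting admissible colorings of $J$ --- using only the necessity/sufficiency criterion of Lemma~\ref{triangle} and the size bounds $|L(v)\cap L(v_2)|,|L(v)\cap L(v_3)|\le 4$, resp.\ $|L(v)\cap(L(v_2)\cup L(v_3))|\le 4$, which follow from the hypothesis that $J$ is colorable --- that this one forbidden configuration can always be avoided; in the $|N|=2$ case you instead derive $L(v_2)\subseteq T\cup S$ with $S\subseteq T$, contradicting $|L(v_2)|=8>6=|T|$. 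The paper instead splits on which edge among $v_1,v_2,v_3$ is present and constructs the coloring directly with its toolbox: if $v_1v_2\in E(H)$ it colors $v_2$ inside $L(v_2)\setminus L(v)$ and applies Lemma~\ref{3-3-4-3} to the path $u_1v_1vv_3$; if $v_2v_3\in E(H)$ it reserves a color $\alpha$ via Lemma~\ref{triangle} and colors $v_1$ around it; in the independent case it colors the path $vv_1u_1$ first via Lemma~\ref{3-3-3} and then extends to $v_2,v_3$. Your approach is more self-contained and conceptually uniform (identify the unique obstruction, then a flexibility count), at the cost of some case-by-case counting; the paper's version is shorter because it reuses the path lemmas already established and colors the vertices in an order tailored to each case. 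I verified the delicate points of your argument --- the extension step when $|M|=5$ (where $L(u_1)\subseteq M$ would force $U=T$), the occurrence criteria for $\varphi(v)=S$ in both the path and triangle subcases, and the $|N|=2$ contradiction --- and they are all sound.
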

\begin{proof}
If $v_1v_2\in E(H)$, then let $\varphi(v_2)$ be a $3$-element subset of $L(v_2)\setminus L(v)$.
Let $L'(v)=L(v)$, $L'(v_3)=L(v_3)$, $L'(u_1)=L(u_1)$, and $L'(v_1)=L(v_1)\setminus\varphi(v_2)$.
Since $H-v_1$ is $L$-colorable, $H-\{v_1,v_2\}$ is $L'$-colorable, and thus $H-v_2$ is
$L'$-colorable by Lemma~\ref{3-3-4-3}.  Hence, $\varphi$ extends to an $(L:3)$-coloring of $H$.

Hence, assume that $v_1v_2\not\in E(H)$, and by symmetry, $v_1v_3\not\in E(H)$.
If $v_2v_3\in E(H)$, then since $vv_2v_3$ is $(L:3)$-colorable, there exists $\alpha\in L(v)$
such that $|\{\alpha\}\cup L(v_2)\cup L(v_3)|\ge 9$.  Choose distinct $\beta,\beta'\in L(v_1)\setminus\{\alpha\}$
such that $\beta\not\in L(u_1)$ and $\beta'\not\in L(v)$.
Let $\varphi(v_1)$ be an arbitrary $3$-element subset of $L(v_1)\setminus\{\alpha\}$ containing
$\beta$ and $\beta'$.  Then $\varphi$ extends to an $(L:3)$-coloring of $H$ (using Lemma~\ref{triangle}), since $|L(u_1)\setminus\varphi(v_1)|\ge 3$,
$|L(v)\setminus\varphi(v_1)|\ge 3$, and $\alpha\in L(v)\setminus\varphi(v_1)$.

Finally, suppose that $\{v_1, v_2,v_3\}$ is an independent set.  Since the path $v_2vv_3$ is $(L:3)$-colorable,
we have $L(v_2)\neq L(v)\neq L(v_3)$.  Choose colors $\beta\in L(v)\setminus L(v_2)$ and $\beta'\in L(v)\setminus L(v_3)$
arbitrarily.  By Lemma~\ref{3-3-3}, there exists an $(L:3)$-coloring $\varphi$ of
$vv_1u_1$ such that $\beta,\beta'\in \varphi(v)$.  Then $|L(v_i)\setminus\varphi(v)|\ge 3$ for $i\in \{2,3\}$,
and thus $\varphi$ extends to an $(L:3)$-coloring of $H$.
\end{proof}

\begin{lemma}\label{claw543}
Let $L$ be a list assignment for the graph $H$ consisting of a path $u_1v_1vv_2u_2$, a vertex $v_3$ adjacent to $v$,
and possibly the edge $v_1v_3$,
such that $|L(u_1)|=|L(v)|=|L(u_2)|=5$, $|L(v_2)|=8$, $|L(v_3)|=2+3\deg_H(v_3)$, and $|L(v_1)|=3\deg_H(v_1)-1$.
If $H-v_2$ is $(L:3)$-colorable, then $H$ is $(L:3)$-colorable.
\end{lemma}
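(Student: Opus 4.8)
The plan is to split on whether $v_1v_3\in E(H)$, reduce both cases to a single statement about the subgraph $G_0:=H[\{u_1,v_1,v,v_3\}]$, and then settle that statement by analysing which $3$-subsets of $L(v)$ can occur as $\varphi(v)$ in an $(L:3)$-coloring of $G_0$.

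First I would record the structure. The neighbours of $v_1$ in $H$ are $u_1$, $v$ and possibly $v_3$, so $\deg_H(v_1)\in\{2,3\}$ and hence $|L(v_1)|\in\{5,8\}$; likewise $\deg_H(v_3)\in\{1,2\}$ and $|L(v_3)|\in\{5,8\}$. If $v_1v_3\notin E(H)$ then $H$ is the path $u_1v_1vv_2u_2$ with a pendant $v_3$ at $v$, all lists of size $5$ except $|L(v_2)|=8$; if $v_1v_3\in E(H)$ then $H$ is the triangle $vv_1v_3$ together with a pendant $u_1$ at $v_1$ and the path $vv_2u_2$, with $|L(u_1)|=|L(u_2)|=|L(v)|=5$ and $|L(v_1)|=|L(v_3)|=|L(v_2)|=8$. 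In both cases $H-v_2$ is precisely $G_0$ plus an isolated vertex $u_2$, so the hypothesis says that $G_0$ is $(L:3)$-colorable.

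Next I would carry out the extension step. Given any $(L:3)$-coloring $\varphi$ of $G_0$, colour $v_2$ by a $3$-subset of $L(v_2)\setminus\varphi(v)$ (which has size at least $5$) and then $u_2$ by a $3$-subset of $L(u_2)\setminus\varphi(v_2)$; this works as soon as $\varphi(v_2)$ can be picked to contain a colour outside $L(u_2)$, i.e.\ unless $L(v_2)\setminus\varphi(v)=L(u_2)$, which forces $L(u_2)\subseteq L(v_2)$ and $\varphi(v)=B$ for the $3$-set $B:=L(v_2)\setminus L(u_2)$, and then also $B\subseteq L(v)$. Hence it suffices to prove: \emph{if $L(u_2)\subseteq L(v_2)$ and $B\subseteq L(v)$, then $G_0$ has an $(L:3)$-coloring $\varphi$ with $\varphi(v)\neq B$}; in every other case any $(L:3)$-coloring of $G_0$ already extends to $H$.

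For this I would argue by contradiction, assuming every $(L:3)$-coloring of $G_0$ gives $v$ the colour $B$. Then $B$ occurs as $\varphi(v)$, and inspecting such a coloring yields $B\not\subseteq L(v_1)$, $B\not\subseteq L(v_3)$, $L(v_1)\setminus L(u_1)\not\subseteq B$, and $L(v)\neq L(v_1)$; but no $3$-subset $C\neq B$ of $L(v)$ occurs as $\varphi(v)$. In the path case one checks that $\varphi(v)=C$ is impossible exactly when $C\subseteq L(v_1)$, $C\subseteq L(v_3)$, or $L(v_1)\setminus L(u_1)\subseteq C$ (the last because then every $3$-subset of $L(v_1)\setminus C$ lies in $L(u_1)$, so $u_1$ cannot be coloured). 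If some colour of $L(v_1)\setminus L(u_1)$ lies outside $L(v)$, the third condition never applies to a $3$-subset of $L(v)$, and the at most $\binom{4}{3}+\binom{4}{3}=8$ sets $C\subseteq L(v_1)$ or $C\subseteq L(v_3)$ cannot cover all $\binom{5}{3}-1=9$ impossible sets — a contradiction. Otherwise $L(v_1)\setminus L(u_1)\subseteq L(v)$, and, being not contained in $B$, it contains one of the two colours $a,b$ with $L(v)=B\cup\{a,b\}$, say $a$; then the three $3$-subsets of $L(v)$ that contain $b$ but not $a$ are all impossible, each lies in $L(v_1)$ or in $L(v_3)$, two of them lie in the same list, and the union of those two is $\{b\}\cup B$ — so $B$ lies in $L(v_1)$ or $L(v_3)$, again a contradiction. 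In the triangle case the size-$8$ lists of $v_1$ and $v_3$ make $\varphi(v)=C$ impossible only if $L(v_1)=L(v_3)$ and $C\subseteq L(v_1)$ — where Lemma~\ref{triangle}, applied to the colourable triangle $vv_1v_3$, gives $|L(v)\cap L(v_1)|\le4$, so at most $\binom{4}{3}=4$ such $C$ — or if $C=L(v_1)\setminus L(u_1)$ (at most one more), which again cannot cover all $9$ impossible sets. The main obstacle is obtaining these characterizations of the impossible choices of $\varphi(v)$, in particular the condition keeping the pendant $u_1$ colourable and, in the triangle case, the interplay between the disjointness of $\varphi(v_1),\varphi(v_3)$ and the colourability of $u_1$; once they are in hand, the counting and pigeonhole arguments close the cases.
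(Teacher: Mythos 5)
Your argument is correct, but it takes a genuinely different route from the paper. The paper handles the two cases ($v_1v_3\in E(H)$ or not) by constructing an explicit partial set coloring (choosing a few ``seed'' colors such as $\alpha\in(L(v)\cup L(v_1))\setminus L(v_3)$, colors of $v_2,u_2,u_1$ avoiding conflicts) and then reducing, via Lemma~\ref{lemma-redulist}, to a residual instance finished either greedily (Lemma~\ref{lemma-greedy}) or by the Hall-type criterion of Lemma~\ref{triangle}; this keeps the proof uniform with the other lemmas of that section. You instead first isolate the unique obstruction to extending a coloring of $H-v_2$ across $v_2,u_2$ (namely $L(u_2)\subseteq L(v_2)$ and $\varphi(v)=B:=L(v_2)\setminus L(u_2)\subseteq L(v)$), and then prove a flexibility statement for $G_0=H[\{u_1,v_1,v,v_3\}]$: a colorable $G_0$ admits a coloring whose trace at $v$ avoids any one prescribed $3$-set. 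Your characterizations of the ``impossible'' traces are exact in both cases (in the path case, $C\subseteq L(v_1)$, $C\subseteq L(v_3)$, or $L(v_1)\setminus L(u_1)\subseteq C$; in the triangle case, $L(v_1)=L(v_3)$ with $C\subseteq L(v_1)$, or $C=L(v_1)\setminus L(u_1)$), and the counting/pigeonhole steps then close as you describe; I checked the step you flag as the main obstacle, the simultaneous choice of disjoint $\varphi(v_1),\varphi(v_3)$ with $\varphi(v_1)\not\subseteq L(u_1)$, and it does go through by a short defect-Hall argument since $|L(v_1)\setminus C|,|L(v_3)\setminus C|\ge 5$ and $|(L(v_1)\cup L(v_3))\setminus C|\ge 6$ outside the excluded case. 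Two small points to make explicit when writing this up: in the path case you also need $L(v)\neq L(v_3)$ (not just $L(v)\neq L(v_1)$) to get the bound $\binom{4}{3}$ on each family, which follows from colorability of the edge $vv_3$ exactly as for $vv_1$; and in the triangle case the bound $|L(v)\cap L(v_1)|\le 4$ uses the necessity direction of Lemma~\ref{triangle} for the colorable triangle $vv_1v_3$, as you note. What your approach buys is a cleaner conceptual statement (local flexibility at $v$) that does the case analysis by counting rather than by bespoke color choices; what the paper's approach buys is reuse of its standard reduction machinery and no need for exact characterizations of forbidden traces.
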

\begin{proof}
Let us first consider the case that $v_1v_3\in E(H)$.  Since the triangle $v_1vv_3$ is $(L:3)$-colorable,
there exists $\alpha\in (L(v)\cup L(v_1))\setminus L(v_3)$.  Choose $\beta\in L(v_2)\setminus (\{\alpha\}\cup L(u_2))$
and distinct colors $\gamma,\gamma'\in L(v_2)\setminus L(v)$ arbitrarily.  Let $\varphi(v_2)$ be a $3$-element subset of
$L(v_2)$ containing $\beta$, $\gamma$, and $\gamma'$.  Note that $|L(u_2)\setminus\varphi(v_2)|\ge 3$ by the choice of $\beta$,
and thus we can choose $\varphi(u_2)$ as a $3$-element subset of $L(u_2)\setminus\varphi(v_2)$.
By the choice of $\gamma$ and $\gamma'$, there exists a $4$-element subset $A$ of $L(v)$, containing $\alpha$ if $\alpha\in L(v)$.
Choose distinct colors $\kappa,\kappa'\in L(v_1)\setminus A$, such that $\kappa=\alpha$ if $\alpha\not\in A$.
Let $\varphi(u_1)$ be a $3$-element subset of $L(u_1)\setminus \{\kappa,\kappa'\}$, and let $B=L(v_1)\setminus \varphi(u_1)$.
Note that $|B|\ge 5$, $|A\cup B|\ge |A\cup \{\kappa,\kappa'\}|=6$, and $|A\cup B\cup L(v_3)|\ge |L(v_3)\cup \{\alpha\}|=9$,
and thus by Lemma~\ref{triangle}, $\varphi$ extends to an $(L:3)$-coloring of $H$.

Suppose now that $v_1v_3\not\in E(H)$.  Since the path $u_1v_1vv_3$ is $(L:3)$-colorable, we have
$L(u_1)\neq L(v_1)\neq L(v)\neq L(v_3)$, and furthermore, if $|L(v_1)\setminus L(u_1)|=1$, then
$L(v_1)\setminus L(u_1)\neq L(v)\setminus L(v_3)$.  Hence, there exists a color $\alpha\in L(v)\setminus L(v_3)$
such that $|L(v_1)\setminus (\{\alpha\}\cup L(u_1))|\ge 1$.  Let $\beta$ be any color in $L(v_1)\setminus (\{\alpha\}\cup L(u_1))$.
If $\alpha\in L(v_1)$, then let $\alpha'$ be any color in $L(v)\setminus L(v_1)$, otherwise let $\alpha'$ be any color in $L(v)\setminus \{\alpha,\beta\}$.
If $\beta\in L(v)$, then let $\beta'$ be any color in $L(v_1)\setminus L(v)$, otherwise let $\beta'$ be any color in $L(v_1)\setminus\{\alpha,\alpha',\beta\}$.
Let $\gamma$ be any color in $L(u_1)\setminus L(v_1)$, let $\varepsilon$ be any color in $L(v_3)\setminus L(v)$, and let $\kappa$ be any color in $L(v_2)\setminus (\{\alpha,\alpha'\}\cup L(u_2)\})$.
Let $f'(u_1)=f'(v_3)=f'(v_2)=2$, $f'(v_1)=f'(v)=1$, and $f'(u_2)=3$.
By Lemma~\ref{lemma-redulist}, it suffices to prove that $H$ has
an $(L':f')$-coloring for any list assignment $L'$ such that $|L'(u_1)|=|L'(v_3)|=3$, $|L'(v_1)|=2$, $|L'(v)|=1$, and $|L'(v_2)|=|L'(u_2)|=5$.
This is the case by coloring the vertices of $H$ greedily in order $v$, $v_3$, $v_1$, $u_1$, $v_2$, and $u_2$.
\end{proof}

\section{Properties of a minimal counterexample}

We are going to prove a mild strengthening of Theorem~\ref{MTl} where a clique (one vertex, two adjacent
vertices, or a triangle) is precolored.
A (hypothetical) \emph{counterexample} (to this strengthening) is a triple $(G,L,Z)$, where
$G$ is a plane graph without $4$- or $5$-cycles, $Z$ is the vertex set of a clique of $G$, and $L$ is an assignment
of lists of size $11$ to vertices of $V(G)\setminus Z$ and pairwise disjoint lists of size $3$ to vertices $Z$, such that
$G$ is not $(L:3)$-colorable.  The \emph{order} of the counterexample is the number of vertices of $G$.
A counterexample is \emph{minimal} if there exists no counterexample of smaller order.

\begin{lemma}\label{conn}
If $(G,L,Z)$ is a minimal counterexample, then $G$ is $2$-connected and every triangle in $G$ bounds a face.
\end{lemma}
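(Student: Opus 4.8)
The plan is to argue by contradiction in the usual minimal-counterexample style: assume $(G,L,Z)$ is a minimal counterexample that violates one of the two conclusions, and produce a strictly smaller counterexample (or directly an $(L:3)$-coloring of $G$), contradicting minimality. There are two things to rule out: (i) $G$ is not $2$-connected, and (ii) some triangle $T$ of $G$ does not bound a face.

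For (i), first handle the trivial degeneracies. If $G$ is disconnected, colour each component separately (the clique $Z$ lies in a single component, so the precolouring constraint only affects that one); a colouring of each component from the restricted lists exists by minimality, and their union is an $(L:3)$-colouring of $G$, a contradiction. So $G$ is connected but has a cutvertex $x$; write $G=G_1\cup G_2$ with $G_1\cap G_2=\{x\}$ and $V(G_i)\setminus\{x\}\neq\emptyset$. Choose the labelling so that $Z\subseteq V(G_1)$ (possible since $Z$ is a clique, hence connected, and thus lies in one side — if $Z=\{x\}$ either side works). Since $G_1,G_2$ are plane graphs without $4$- or $5$-cycles and each has fewer vertices than $G$: apply minimality to $(G_1,L\restriction_{G_1},Z)$ to get an $(L:3)$-colouring $\varphi_1$ of $G_1$. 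Now recolour: on $G_2$, treat $x$ as a precoloured vertex with the $3$-set $\varphi_1(x)$ (and $\{x\}$ is a clique of $G_2$), so minimality applied to $(G_2, L\restriction_{G_2}, \{x\})$ yields an $(L:3)$-colouring $\varphi_2$ of $G_2$ agreeing with $\varphi_1$ on $x$. Gluing $\varphi_1$ and $\varphi_2$ gives an $(L:3)$-colouring of $G$ — the only edges between the two sides pass through $x$, where the colours match — contradicting that $G$ is not $(L:3)$-colourable. (A small caveat: if $Z$ is a triangle and $x\in Z$, one should check the non-precoloured side $G_2$ still has $\{x\}$ precoloured with a $3$-set from disjoint lists; this is automatic.)

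For (ii), suppose a triangle $T=xyz$ of $G$ does not bound a face. Since $G$ is plane, $T$ separates the plane; let $G_1$ be the subgraph drawn inside (closed) and $G_2$ the subgraph drawn outside (closed), so $G_1\cap G_2=T$, $G_1\cup G_2=G$, and each $G_i$ is a proper subgraph (because $T$ bounds no face, each side contains a vertex or edge not in the other). Both $G_i$ are plane without $4$- or $5$-cycles and have fewer vertices than $G$, or at least one does — if one side, say $G_2$, equals $T$ itself, then $T$ would bound the outer face of $G_2$ but possibly not of $G$; to be safe, note that $T$ not bounding a face of $G$ forces both sides to strictly contain $T$, so $|V(G_i)|<|V(G)|$ for $i=1,2$. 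The clique $Z$ has at most $3$ vertices and, being connected, lies on one closed side, say inside $G_1$ (and if $Z\cap\{x,y,z\}\neq\emptyset$ we may still take $Z\subseteq V(G_1)$). Apply minimality to $(G_1, L\restriction_{G_1}, Z)$: but we must be careful that $T$ itself may need to be treated as (part of) the precoloured clique. Instead, proceed as in the cutvertex case but with the triangle as the interface: first colour $G_1$ (which contains $Z$) by minimality to get $\varphi_1$; this assigns pairwise disjoint $3$-sets to $x,y,z$, so $\{x,y,z\}$ together with their colours forms a valid precoloured triangle for $G_2$; apply minimality to $(G_2, L\restriction_{G_2}, \{x,y,z\})$ to extend these colours to an $(L:3)$-colouring $\varphi_2$ of $G_2$. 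Since the only vertices and edges shared by $G_1$ and $G_2$ lie in $T$, where the two colourings agree, $\varphi_1\cup\varphi_2$ is an $(L:3)$-colouring of $G$ — a contradiction.

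The main obstacle, and the only place requiring care, is the bookkeeping about where the precoloured clique $Z$ sits relative to the separating structure, and making sure that when we push a colouring across the interface we are always applying minimality to a triple that is a legitimate "counterexample candidate": the interface ($\{x\}$ or $\{x,y,z\}$) must be a clique of the smaller graph, the smaller graph must still be plane without $4$- or $5$-cycles (immediate, as subgraphs of $G$), and the lists on the interface must be pairwise disjoint $3$-sets — which holds precisely because the colouring $\varphi_1$ we transported is itself a proper $(L:3)$-colouring. One should also double-check that both sides of a non-facial triangle, and both sides of a cutvertex, are genuinely smaller than $G$; this is exactly the content of "$T$ bounds no face" (resp. "$x$ is a cutvertex"), so no edge cases survive. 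Everything else is just gluing two colourings that agree on the overlap.
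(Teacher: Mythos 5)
Your proposal is correct and follows essentially the same route as the paper: split $G$ along the cutvertex (or the separating non-facial triangle), colour the side containing $Z$ by minimality, then transport the colours on the interface as a precoloured clique $\{x\}$ (resp. $V(T)$) to the other, strictly smaller side and apply minimality again, gluing the two colourings. The paper merely packages the disconnected and cutvertex cases together by allowing $V(G_1\cap G_2)\subseteq\{z\}$; otherwise the arguments coincide.
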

\begin{proof}
If $G$ is not $2$-connected, then there exist proper induced subgraphs $G_1$ and $G_2$ of $G$
and a vertex $z\in V(G_2)$ such that $G=G_1\cup G_2$, $V(G_1\cap G_2)\subseteq\{z\}$, and
$Z\subseteq V(G_1)$.  By the minimality of the counterexample, there exists an $(L:3)$-coloring $\varphi_1$ of $G_1$.
If $z\in V(G_1)$, then let $L'(z)=\varphi_1(z)$, otherwise let $L'(z)$ be any $3$-element subset of $L(z)$.
Let $L'(v)=L(v)$ for all $v\in V(G_2)\setminus \{z\}$.  Note that $(G_2,L',\{z\})$ is not a counterexample since it has smaller
order than $(G,L,Z)$,
and thus there exists an $(L':3)$-coloring $\varphi_2$ of $G_2$.  However, then $\varphi_1$ and $\varphi_2$ combine
to an $(L:3)$-coloring of $G$, which is a contradiction.

Similarly, if $G$ contains a non-facial triangle $T$, then there exist proper induced subgraphs $G_1$ and $G_2$ of $G$
such that $G=G_1\cup G_2$, $T=G_1\cap G_2$, and $Z\subseteq V(G_1)$.  By the minimality of the counterexample, there exists an $(L:3)$-coloring $\varphi_1$ of $G_1$.
Let $L'(z)=\varphi_1(z)$ for all $z\in V(T)$ and $L'(v)=L(v)$ for all $v\in V(G_2)\setminus V(T)$.  Note that $(G_2,L',V(T))$ is not a counterexample since it has smaller order than $(G,L,Z)$,
and thus there exists an $(L':3)$-coloring $\varphi_2$ of $G_2$.  However, then $\varphi_1$ and $\varphi_2$ combine
to an $(L:3)$-coloring of $G$, which is a contradiction.
\end{proof}

Let $L$ be a list assignment for a graph $G$, let $H$ be an induced subgraph of $G$,
and let $\psi$ be an $(L:3)$-coloring of $G-V(H)$.  Let $L_\psi$
denote the list assignment for $H$ defined by
$$L_\psi(v)=L(v)\setminus\bigcup_{uv\in E(G), u\not\in V(H)} \psi(u)$$
for all $v\in V(H)$.  Note that
\begin{equation}\label{eq:size}
|L_\psi(v)|\ge |L(v)|-3(\deg_G(v)-\deg_H(v)).
\end{equation}
Furthermore, any $(L_\psi:3)$-coloring of $H$ combines with $\psi$ to an $(L:3)$-coloring of $G$.
Hence, the following claim holds.
\begin{proposition}\label{obs:extend}
Let $(G,L,Z)$ be a minimal counterexample and let $H$ be an induced subgraph of $G$ disjoint from $Z$.
If $\psi$ is an $(L:3)$-coloring of $G-V(H)$, then $H$ is not $(L_\psi:3)$-colorable.
\end{proposition}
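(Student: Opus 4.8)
The plan is to prove the contrapositive by a direct gluing argument: assume that $H$ does admit an $(L_\psi:3)$-coloring $\varphi_H$, and use it together with $\psi$ to build an $(L:3)$-coloring of $G$ that respects the prescribed coloring of $Z$, contradicting the fact that $(G,L,Z)$ is a counterexample. Concretely, I would define $\varphi$ on $V(G)$ by $\varphi(v)=\varphi_H(v)$ for $v\in V(H)$ and $\varphi(v)=\psi(v)$ for $v\in V(G)\setminus V(H)$, and then check the three things an $(L:3)$-coloring consistent with the precoloring must satisfy.

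First, every set $\varphi(v)$ has size $3$ and lies in $L(v)$: for $v\notin V(H)$ this holds because $\psi$ is an $(L:3)$-coloring of $G-V(H)$, and for $v\in V(H)$ it follows from $\varphi_H(v)\subseteq L_\psi(v)\subseteq L(v)$ together with $|\varphi_H(v)|=3$ (here (\ref{eq:size}) is available but not actually needed). Since $V(H)\cap Z=\emptyset$, every vertex of $Z$ lies in $G-V(H)$ and is coloured by $\psi$; as the lists of the vertices of $Z$ have size $3$, $\psi$ (hence $\varphi$) assigns to each $z\in Z$ exactly its prescribed list, so $\varphi$ extends the precoloring of $Z$. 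Second, for each edge $uv\in E(G)$ I would verify $\varphi(u)\cap\varphi(v)=\emptyset$ by cases: if $u,v\in V(H)$ then $uv\in E(H)$ (as $H$ is induced) and disjointness comes from $\varphi_H$ being a set coloring; if $u,v\notin V(H)$ then $uv\in E(G-V(H))$ and disjointness comes from $\psi$; and if $u\in V(H)$, $v\notin V(H)$, then $\psi(v)$ has been deleted from $L_\psi(u)$ by the definition of $L_\psi$, so $\varphi(u)=\varphi_H(u)\subseteq L_\psi(u)$ is disjoint from $\psi(v)=\varphi(v)$.

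Putting these together, $\varphi$ is an $(L:3)$-coloring of $G$ agreeing with the prescribed colouring of $Z$, contradicting that $(G,L,Z)$ is a counterexample; hence $H$ is not $(L_\psi:3)$-colorable. I do not expect any genuine obstacle here: the statement is essentially a restatement of the remark preceding it, and the only point that deserves a line of care is checking that the precoloring of $Z$ survives, which is exactly where the hypothesis that $H$ is disjoint from $Z$ is used.
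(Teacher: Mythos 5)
Your proposal is correct and is essentially the paper's own argument: the paper proves this proposition by the one-line remark that any $(L_\psi:3)$-coloring of $H$ combines with $\psi$ to an $(L:3)$-coloring of $G$, contradicting that $(G,L,Z)$ is a counterexample, and your write-up just spells out the routine verifications (edges inside $H$, edges outside $H$, and crossing edges handled by the definition of $L_\psi$).
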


In a counterexample $(G,L,Z)$, a vertex $v\in V(G)$ is \emph{internal} if $v\not\in Z$.

\begin{lemma}\label{MD}
If $(G,L,Z)$ is a minimal counterexample, then every internal vertex of $G$ has degree at least $3$.
\end{lemma}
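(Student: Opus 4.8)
The plan is a routine reducibility argument. Suppose for contradiction that $G$ has an internal vertex $v$ with $\deg_G(v)\le 2$; since $G$ is $2$-connected by Lemma~\ref{conn} we in fact have $\deg_G(v)=2$, but this refinement will not be needed. The idea is to color $G-v$ using the minimality of $(G,L,Z)$ and then extend the coloring to $v$.

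First I would check that $(G-v,L,Z)$ is again a triple of the type appearing in the definition of a counterexample: $G-v$ is a plane graph with no cycle of length $4$ or $5$ (deleting a vertex creates no such cycle), $Z$ is still the vertex set of a clique of $G-v$ because $v\notin Z$ (as $v$ is internal), and the restriction of $L$ still assigns lists of size $11$ to the internal vertices of $G-v$ and pairwise disjoint lists of size $3$ to the vertices of $Z$. Since $|V(G-v)|<|V(G)|$, minimality forces $(G-v,L,Z)$ not to be a counterexample, i.e., $G-v$ has an $(L:3)$-coloring $\psi$.

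Now let $H$ be the one-vertex induced subgraph of $G$ with $V(H)=\{v\}$; it is disjoint from $Z$, so Proposition~\ref{obs:extend} applies and tells us that $H$ is not $(L_\psi:3)$-colorable. On the other hand, $\deg_H(v)=0$, so the estimate~(\ref{eq:size}) gives
$$|L_\psi(v)|\ge |L(v)|-3\bigl(\deg_G(v)-\deg_H(v)\bigr)=11-3\deg_G(v)\ge 11-6=5,$$
and any $3$-element subset of $L_\psi(v)$ is an $(L_\psi:3)$-coloring of $H$. This contradiction proves the lemma.

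I do not expect a real obstacle in this particular lemma. The only point that needs attention is that vertex deletion preserves every condition in the definition of a counterexample, and the whole argument hinges on the trivial inequality $11-3\cdot 2\ge 3$, which is exactly why the degree threshold here is $2$ rather than something larger. The substantive reductions — excluding low-degree vertices with special neighborhoods, handling configurations of adjacent triangles, and so on — are where the colorability lemmas of Section~2 will be invoked, but those appear in later lemmas, not in this one.
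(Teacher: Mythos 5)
Your proposal is correct and matches the paper's proof essentially verbatim: delete $v$, invoke minimality to color $G-v$, and use Proposition~\ref{obs:extend} together with the bound $|L_\psi(v)|\ge 11-2\cdot 3=5$ from (\ref{eq:size}) to extend, a contradiction. The extra check that $(G-v,L,Z)$ still satisfies the counterexample definition is a harmless elaboration of what the paper leaves implicit.
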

\begin{proof}
Suppose for a contradiction that there exists a vertex $v\in V(G)\setminus Z$ of degree at most two.
By the minimality of the counterexample, the graph $G-v$ has an $(L:3)$-coloring $\psi$.
By Proposition~\ref{obs:extend}, $v$ is not $(L_\psi:3)$-colorable.  However, this is a contradiction,
since $|L_\psi(v)|\ge 11-2\cdot 3=5$ by (\ref{eq:size}).
\end{proof}

\begin{lemma}
\label{33-path}
Let $(G,L,Z)$ be a minimal counterexample.  Let $P=v_1\ldots v_k$ be a path in $G$ disjoint from $Z$ such that
$3\le k\le 6$, $\deg (v_1) = \deg (v_2) = \deg (v_k) = 3$ and $\deg(v_i)=4$ for $3\le i\le k-1$.
Then $k=3$ and $v_1v_3\in E(G)$.
\end{lemma}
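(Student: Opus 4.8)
The plan is to argue by contradiction from the minimality of $(G,L,Z)$. Suppose $G$ contains a path $P=v_1\ldots v_k$ as in the statement for which \emph{either} $k\ge 4$, \emph{or} $k=3$ and $v_1v_3\notin E(G)$; I will derive a contradiction. Since $P$ is disjoint from $Z$, the graph $G-V(P)$ has strictly fewer vertices than $G$, still contains the precoloured clique $Z$, and is again a plane graph without $4$- or $5$-cycles, so by minimality it has an $(L:3)$-coloring $\psi$. By Proposition~\ref{obs:extend} the induced subgraph $H:=G[V(P)]$ is not $(L_\psi:3)$-colorable, so it suffices to produce an $(L_\psi:3)$-coloring of $H$, which I will extract from the small-graph lemmas of Section~2.

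The first step is to pin down the possible shape of $H$. A chord $v_iv_j$ of $P$ with $j>i+1$ closes a cycle of length $j-i+1$ in $G$, which must avoid $4$ and $5$; since $k\le 6$, the only chords of $H$ are triangular chords $v_iv_{i+2}$ and, when $k=6$, the chord $v_1v_6$. Two triangular chords sharing a path-edge would close a $4$-cycle, the chord $v_1v_6$ coexists with no triangular chord (each choice would produce a $5$-cycle), and $v_2$, having degree $3$, is incident with only one edge off $P$; combining this with Lemma~\ref{conn} (every triangle of $G$ is facial) limits $H$ to: the path $v_1v_2v_3$ when $k=3$ (the chord being excluded by assumption); for $k\in\{4,5\}$, the path $v_1\ldots v_k$ with at most one or two triangular chords; and for $k=6$ the path $v_1\ldots v_6$ with at most two triangular chords, or the $6$-cycle $v_1\ldots v_6v_1$ obtained by adding $v_1v_6$.

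Next comes the size bookkeeping. By~(\ref{eq:size}), $|L_\psi(v)|\ge 11-3(\deg_G(v)-\deg_H(v))$. Since $v_2$ has degree $3$ and two neighbours on $P$, $|L_\psi(v_2)|\ge 8$; a degree-$3$ endpoint $v_1$ or $v_k$ has $|L_\psi|\ge 5$, improving to $\ge 8$ if incident with a chord of $H$; an internal degree-$4$ vertex has $|L_\psi|\ge 5$, improving to $\ge 8$ (or $\ge 11$) if it carries one (two) chords; and in the $6$-cycle case $v_1,v_2,v_6$ get list size $\ge 8$ while $v_3,v_4,v_5$ get $\ge 5$. After discarding superfluous colours, in each case the shape of $H$ and these sizes meet the hypotheses of one of the Section-2 lemmas — Lemma~\ref{3-3-3} for $k=3$; Lemmas~\ref{3-3-4-3}, \ref{lollipop} for $k=4$; Lemmas~\ref{3-3-4-4-3}, \ref{3-4-3---3}, together with colouring a cut-vertex or a triangle of $H$ first to split off a pendant subpath (using Lemma~\ref{triangle}) for $k=5$; Lemmas~\ref{3-3-4-4-4-3}, \ref{3-4-3---3} for the $k=6$ path; and Lemma~\ref{l6cycle}, with $S$ a suitable pair of list-$8$ vertices, for the $6$-cycle. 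Applying the relevant lemma yields an $(L_\psi:3)$-coloring of $H$, the desired contradiction; the excluded case $k=3$, $v_1v_3\notin E(G)$ in fact already falls to Lemma~\ref{3-3-3}, which carries no side condition. (Note that when $k=3$ and $v_1v_3\in E(G)$ this scheme stops: then $H$ is a triangle with all $L_\psi$-lists of size $8$, which by Lemma~\ref{triangle} need not be $(L_\psi:3)$-colorable — precisely the conclusion of the lemma, which is why that configuration is not excluded.)

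The main obstacle is discharging the auxiliary clauses of the lemmas invoked for $k\ge 4$: each demands that a prescribed subpath, or a prescribed triangle, of $H$ be $(L_\psi:3)$-colorable. By Lemma~\ref{triangle} and its two-vertex analogue, such a clause can fail only if $\psi$ has collapsed the lists of two adjacent internal path vertices onto one common $5$-element set, or the three lists of a triangle of $H$ onto one common $8$-element set. I would rule this out using that $G$ has no $4$-cycle: two consecutive vertices of $P$ have at most one common neighbour outside $P$, so, after renaming, one of them has an outside neighbour $a$ adjacent to it but not to the other, and re-choosing $\psi(a)$ among the colours still available to $a$ in $G-V(P)$ changes the reduced list of that vertex while leaving the neighbour's reduced list untouched; choosing $\psi$ optimally (for instance, so that $\sum_{v\in V(P)}|L_\psi(v)|$ is maximum, or so that the number of such coincidences along $P$ is minimum) should then make every subpath and triangle of $H$ needed above colorable. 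Carrying this optimality argument through — in particular handling outside neighbours of large degree, which have little recolouring freedom, and the mixed cases where a high-list cut-vertex must be coloured so as to leave room on both sides simultaneously — is the delicate part, and the step I expect to require the most care.
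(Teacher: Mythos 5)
There is a genuine gap, and it sits exactly at the step you flag as ``the delicate part.'' Your plan colors all of $G-V(P)$ by minimality and then must verify the auxiliary hypotheses of Lemmas~\ref{3-3-4-3}, \ref{3-3-4-4-3}, \ref{3-3-4-4-4-3} (and of Lemmas~\ref{lollipop} and \ref{l6cycle} in your chord cases), namely that a prescribed subpath or triangle of $H$ is $(L_\psi:3)$-colorable. Your proposed mechanism --- re-choosing $\psi(a)$ at an outside neighbour $a$, or picking $\psi$ ``optimally'' so as to maximize $\sum_v|L_\psi(v)|$ or minimize list coincidences --- is not an argument: minimality hands you a single $(L:3)$-coloring of $G-V(P)$ with no guaranteed local freedom (the colors available to $a$ once its other neighbours are colored may be exactly its three assigned colors), and no exchange lemma is proved showing that a ``bad'' coincidence can always be repaired while keeping $\psi$ proper on $G-V(P)$. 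The paper sidesteps this entirely with a small but crucial trick: it deletes only $\{v_1,v_2\}$, takes an $(L:3)$-coloring $\psi_0$ of $G-\{v_1,v_2\}$, and defines $\psi$ as its restriction to $G-V(P)$. Then the restriction of $\psi_0$ to $v_3\ldots v_k$ itself certifies that $P-\{v_1,v_2\}$ is $(L_\psi:3)$-colorable, which is precisely the side condition the path lemmas require; the contradiction with Proposition~\ref{obs:extend} follows with no optimality or recoloring argument at all. Without some version of this idea (or a genuinely new argument), your proof does not close.

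A secondary divergence concerns chords. The paper chooses the path $P$ with $k$ minimum among all violating paths; then any chord (or the edges $v_1v_k$, $v_2v_k$) yields a shorter path of the same degree pattern, so $P$ may be assumed induced, and only the uniform-list path lemmas are ever needed. You instead keep the chords and propose to dispatch them via Lemmas~\ref{lollipop}, \ref{l6cycle}, \ref{triangle} plus an unspecified ``colour a cut-vertex or a triangle first'' step; but those lemmas carry exactly the same kind of colorability side conditions (the triangle $v_1v_2v_3$, respectively $C-S$, must already be $(L_\psi:3)$-colorable), so the unresolved difficulty above reappears in every chord case, and the list-size patterns required by those lemmas are not checked against what \eqref{eq:size} actually gives after deleting all of $V(P)$. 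Exploiting the freedom to choose $P$ (shortest), as the paper does, removes this entire case analysis.
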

\begin{proof}
Suppose for a contradiction that either $k\ge 4$, or $k=3$ and $v_1v_3\not\in E(G)$.
Choose such a path $P$ with $k$ minimum.

Note that $G$ contains at most one of the edges $v_1v_k$ and $v_2v_k$; this follows by the assumptions
if $k=3$ and by the fact that $G$ does not contain $4$- or $5$-cycles otherwise.  By the minimality of $k$,
we conclude that $G$ contains neither of these edges, with the exception of the case $k=3$ and the edge $v_2v_3$
(since otherwise we can consider a path $v_1v_2v_k$ or $v_2v_1v_k$ instead of $P$).
Consequently, by the minimality of $k$, it follows that the path $P$ is induced.

By the minimality of $G$, the graph $G-\{v_1,v_2\}$ has an $(L:3)$-coloring $\psi_0$.
Let $\psi$ be the restriction of $\psi_0$ to $G-V(P)$, and consider the list assignment $L_\psi$ for $P$.
By the existence of $\psi_0$, we conclude that $P-\{v_1,v_2\}$ is $(L_\psi:3)$-colorable.
Note that $|L_\psi(v_2)|\ge 8$ and $|L_\psi(v_i)|\ge 5$ for $i=1,\ldots, k$ by (\ref{eq:size}).
By Lemmas~\ref{3-3-3}, \ref{3-3-4-3}, \ref{3-3-4-4-3}, and \ref{3-3-4-4-4-3},
the path $P$ is $(L_\psi:3)$-colorable.  However, this contradicts Proposition~\ref{obs:extend}.
\end{proof}

\begin{lemma}\label{6-cycle}
Let $(G,L,Z)$ be a minimal counterexample.  Let $C=v_1\ldots v_6$ be a $6$-cycle in $G$ disjoint from $Z$ such that
all vertices of $C$ have degree at most $4$.  Then at most one vertex of $C$ has degree three.
\end{lemma}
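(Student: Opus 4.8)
The plan is to argue by contradiction, assuming that $C=v_1\ldots v_6$ has at least two vertices of degree three, and to reduce to the list-coloring lemmas on $6$-cycles from Section 2. First I would dispose of the case that two degree-three vertices of $C$ are adjacent: if say $\deg(v_i)=\deg(v_{i+1})=3$, then together with $\deg(v_{i+2})\le 4$ and the earlier Lemma~\ref{33-path} (taking the path $v_iv_{i+1}v_{i+2}$ or a short extension), we would force the chord $v_iv_{i+2}$, which together with $C$ creates a $4$-cycle, contradicting the hypothesis on $G$. So the two degree-three vertices are non-adjacent on $C$, i.e.\ (relabeling) they are $\{v_1,v_t\}$ with $t\in\{3,4\}$, and the remaining four vertices of $C$ have degree exactly $4$ (if one had degree $3$ we would be in a case with three degree-three vertices, handled the same way).

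Next I would check that $C$ is an induced cycle and has no chords: a chord of a $6$-cycle splits it into a triangle plus a $5$-cycle, or into two $4$-cycles, both forbidden, so $C$ is induced. I then delete from $G$ the two degree-three vertices of $C$ whose removal I want — actually, following the pattern of Lemma~\ref{33-path}, I would instead use the minimality of $G$ to $(L:3)$-color $G-\{v_1,v_t\}$ (or a slightly larger deletion if needed to get the hypotheses of Lemma~\ref{l6cycle}), restrict to $\psi$ on $G-V(C)$, and pass to the list assignment $L_\psi$ on $C$. By the bound \eqref{eq:size}, $|L_\psi(v)|\ge 11-3(\deg_G(v)-\deg_C(v))$: for the degree-$4$ vertices of $C$ this gives $|L_\psi(v)|\ge 5$, and for $v_1$ and $v_t$, which have degree $3$ and $\deg_C=2$, it gives $|L_\psi(v)|\ge 8$. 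Moreover the existence of the coloring of $G-\{v_1,v_t\}$ shows $C-\{v_1,v_t\}$ is $(L_\psi:3)$-colorable. Thus $S=\{v_1,v_t\}$ satisfies all the hypotheses of Lemma~\ref{l6cycle}, so $C$ is $(L_\psi:3)$-colorable, contradicting Proposition~\ref{obs:extend}.

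The one subtlety to get right — and the step I expect to be the main obstacle — is making sure that the deletion used to produce $\psi$ leaves $C-S$ $(L_\psi:3)$-colorable while giving exactly the list sizes Lemma~\ref{l6cycle} wants. If I simply delete $\{v_1,v_t\}$, then $C-S$ is a subgraph of $G-\{v_1,v_t\}$ and so is automatically $(L_\psi:3)$-colorable from the restriction of the coloring of $G-\{v_1,v_t\}$; the list sizes then come straight from \eqref{eq:size} as above (after discarding excess colors as permitted by the remark following Lemma~\ref{lemma-greedy}, to make the $\le 8$ and $\le 5$ into equalities $8$ and $5$). So in fact no larger deletion is needed. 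The remaining care is only bookkeeping: verifying that $v_1$ and $v_t$ are internal (they are, since $C$ is disjoint from $Z$), that they indeed have degree exactly $3$ so $\deg_G-\deg_C=1$, and that $t\in\{2,3,4\}$ as required by Lemma~\ref{l6cycle} — which we arranged by relabeling, since non-adjacency of the two degree-three vertices on a $6$-cycle forces their distance along $C$ to be $2$ or $3$. This yields the contradiction and completes the proof.
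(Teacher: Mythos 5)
Your core argument is exactly the paper's proof: delete the two degree-three vertices $S$, use minimality to get an $(L:3)$-coloring of $G-S$, restrict it to $G-V(C)$, note that $C$ is induced (a chord would create a $4$- or $5$-cycle) so (\ref{eq:size}) gives lists of size at least $8$ on $S$ and at least $5$ on $V(C)\setminus S$, observe that $C-S$ inherits an $(L_\psi:3)$-coloring, and then apply Lemma~\ref{l6cycle} to contradict Proposition~\ref{obs:extend}.

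The problem is your preliminary reduction to non-adjacent degree-three vertices, which as written fails. Lemma~\ref{33-path} requires the \emph{first two} vertices \emph{and the last} vertex of the path to have degree three; if the only degree-three vertices of $C$ are the adjacent pair $v_i,v_{i+1}$ and $v_{i+2},\ldots$ all have degree four, then no admissible path (of any length $3\le k\le 6$, in either direction around $C$) exists, so the lemma says nothing and in particular does not force the chord $v_iv_{i+2}$. Hence the adjacent case is not actually disposed of, and since you then restrict to $t\in\{3,4\}$, that case is left uncovered in your write-up. Fortunately the reduction is also unnecessary: the hypothesis of Lemma~\ref{l6cycle} allows $S$ to be an \emph{arbitrary} two-element subset of $V(C)$ — the restriction $S=\{v_1,v_t\}$ with $t\in\{2,3,4\}$ is a without-loss-of-generality normalization inside its proof, and the case $t=2$ is precisely the adjacent situation. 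So simply take $S$ to be any two degree-three vertices of $C$, adjacent or not, and run your second and third paragraphs verbatim (the remark that the other four vertices have degree exactly four is likewise not needed; degree-three vertices outside $S$ still have lists of size at least $5$). With the first paragraph deleted, your proof coincides with the paper's.
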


\begin{proof}
Suppose for a contradiction that $C$ contains at least two vertices of degree three,
and let $S$ be the set of two such vertices.  Note that since $G$ does not contain $4$- or $5$-cycles,
the cycle $C$ is induced.

By the minimality of $G$, the graph $G-S$ has an $(L:3)$-coloring $\psi_0$.
Let $\psi$ be the restriction of $\psi_0$ to $G-V(C)$, and consider the list assignment $L_\psi$ for $C$.
By the existence of $\psi_0$, we conclude that $C-S$ is $(L_\psi:3)$-colorable.
Note that $|L_\psi(v)|\ge 8$ for all $v\in S$ and $|L_\psi(v_i)|\ge 5$ for $v\in V(C)\setminus S$ by (\ref{eq:size}).
By Lemma~\ref{l6cycle}, the cycle $C$ is $(L_\psi:3)$-colorable.  However, this contradicts Proposition~\ref{obs:extend}.
\end{proof}

\begin{lemma}\label{tria3}
Let $(G,L,Z)$ be a minimal counterexample.  Let $v_1v_2v_3$ be a triangle in $G$ disjoint from $Z$ such that $\deg(v_1),\deg(v_3)\le 4$
and $\deg(v_2)=3$.  Then $v_3$ has no neighbor $v_4\not\in \{v_1,v_2\}\cup Z$ of degree $3$.
\end{lemma}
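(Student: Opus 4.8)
The plan is to suppose such a neighbor $v_4$ exists and derive a contradiction by a deletion-and-extension argument; keep the notation of the statement and let $w$ be the third neighbor of $v_2$. First I would clear away the degenerate situations. By Lemma~\ref{conn} the triangle $v_1v_2v_3$ bounds a face, and since $G$ has no $4$-cycle, $v_4\not\sim v_1$ (else $v_1v_2v_3v_4$ is a $4$-cycle). If $\deg(v_3)=3$, then the three vertices of the path $v_4v_3v_2$ all have degree $3$, so Lemma~\ref{33-path} forces $v_4\sim v_2$. If $v_4\sim v_2$, then $\{v_1,v_2,v_3\}$ and $\{v_2,v_3,v_4\}$ are two triangles sharing the edge $v_2v_3$ while $\deg(v_2)=3$; this ``double triangle'' case I would treat separately, by removing $v_2$ and $v_4$, taking $H=G[\{v_1,v_2,v_3,v_4\}]$ (which is $K_4$ minus the edge $v_1v_4$), bounding the lists via (\ref{eq:size}), and coloring $H$ using Lemma~\ref{triangle} together with a claw lemma when $v_3$ has a fourth neighbor. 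So henceforth assume $\deg(v_3)=4$ and $v_4\not\sim v_1$, $v_4\not\sim v_2$; let $x$ be the fourth neighbor of $v_3$, and note the two neighbors of $v_4$ besides $v_3$ avoid $\{v_1,v_2\}$ (no short cycles).

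Now apply minimality to $G-\{v_2,v_4\}$ to obtain an $(L:3)$-coloring $\psi_0$, set $H=G[\{v_1,v_2,v_3,v_4\}]$ — precisely the graph of Lemma~\ref{lollipop}, a triangle $v_1v_2v_3$ with a pendant $v_4$ at $v_3$ — and let $\psi$ be the restriction of $\psi_0$ to $G-V(H)$. By (\ref{eq:size}) and the degree bounds, $|L_\psi(v_1)|\ge5$, $|L_\psi(v_2)|\ge8$, $|L_\psi(v_3)|\ge8$, $|L_\psi(v_4)|\ge5$. If the triangle $v_1v_2v_3$ has an $(L_\psi:3)$-coloring $\tau$, then after shrinking the lists to sizes $5,8,8,5$ while retaining $\tau$ (the reduction remark), Lemma~\ref{lollipop} extends $\tau$ to an $(L_\psi:3)$-coloring of $H$, contradicting Proposition~\ref{obs:extend}; so it suffices to choose $\psi_0$ so that the triangle $v_1v_2v_3$ is $(L_\psi:3)$-colorable. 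Since $\psi_0$ properly colors the edge $v_1v_3$ and the edges leaving $H$, the $3$-sets $\psi_0(v_1)\subseteq L_\psi(v_1)$ and $\psi_0(v_3)\subseteq L_\psi(v_3)$ are disjoint, so all three pairwise unions of $L_\psi(v_1),L_\psi(v_2),L_\psi(v_3)$ have size $\ge6$; hence by Lemma~\ref{triangle} the triangle fails to be colorable only when $|L_\psi(v_1)\cup L_\psi(v_2)\cup L_\psi(v_3)|\le8$, which happens exactly when $L_\psi(v_2)=L_\psi(v_3)$ (each of size $8$) and $L_\psi(v_1)\subseteq L_\psi(v_2)$.

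Escaping this ``pinned'' case is the heart of the argument, and the step I expect to be the main obstacle. As $L_\psi(v_2)=L(v_2)\setminus\psi_0(w)$ and $L_\psi(v_3)=L(v_3)\setminus\psi_0(x)$, the idea is: if $\psi_0(v_1)\cup\psi_0(v_3)$ contains a color outside $L_\psi(v_2)$ the triple union already has size $\ge9$; otherwise, if $L_\psi(v_3)\not\subseteq L_\psi(v_2)$, recolor $v_3$ within $\psi_0$ (its size-$8$ list still has $\ge5$ colors after avoiding $\psi_0(v_1)$, so one can bring in a color of $L_\psi(v_3)\setminus L_\psi(v_2)$); and in the last case $L_\psi(v_2)=L_\psi(v_3)$, alter the colors $\psi_0$ gives to $w$ or to $x$ — possible whenever that vertex has degree $3$, as it then has several free colors in $G-\{v_2,v_4\}$ — while if $w$ or $x$ has larger degree one enlarges $H$ by it and applies Lemma~\ref{claw53} or Lemma~\ref{claw543}. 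In each branch $H$ becomes $(L_\psi:3)$-colorable, contradicting Proposition~\ref{obs:extend}. The genuinely delicate subcases are the pinned one with $w=x$ (where $v_2v_3x$ is a triangle and possibly $L(v_2)=L(v_3)$, so recoloring one vertex is useless and one must pass to a larger subgraph) and the ``double triangle'' case above; in both, the list-size bookkeeping of the enlarged subgraph must be matched precisely against the hypotheses of the small-graph lemmas.
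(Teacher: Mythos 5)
Your overall framework (delete some vertices, pass to $H=G[\{v_1,v_2,v_3,v_4\}]$, apply Lemma~\ref{triangle}/Lemma~\ref{lollipop}, and contradict Proposition~\ref{obs:extend}) matches the paper's, but the specific deletion you chose creates a gap that your sketch does not close. By removing both $v_2$ and $v_4$ you lose the guarantee that the triangle $v_1v_2v_3$ is $(L_\psi:3)$-colorable, and you yourself identify the resulting ``pinned'' case ($L_\psi(v_2)=L_\psi(v_3)$ of size $8$ containing $L_\psi(v_1)$) as the main obstacle. The escapes you propose are not proofs: recoloring $w$ or $x$ within $\psi_0$ need not break the pinning (a color freed in $L(v_2)$ by changing $\psi_0(w)$ may still lie in $L_\psi(v_3)$, and when $w=x$ or $L(v_2)=L(v_3)$ no single-vertex recoloring obviously helps), and the fallback of enlarging $H$ and invoking Lemma~\ref{claw53} or Lemma~\ref{claw543} does not match the degree and list-size hypotheses of those lemmas without substantial extra case analysis that you have not supplied. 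So as written the argument is incomplete exactly at the step you flag as delicate.

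The paper avoids the issue entirely by deleting only $v_4$. Note first that $v_1v_4,v_2v_4\notin E(G)$, since either edge would create a $4$-cycle (in particular $v_2v_4\in E(G)$ gives the $4$-cycle $v_1v_2v_4v_3$, so your ``double triangle'' branch is vacuous, and with it the detour through Lemma~\ref{33-path} when $\deg(v_3)=3$). Taking an $(L:3)$-coloring $\psi_0$ of $G-v_4$ and restricting to $G-V(H)$, the restriction of $\psi_0$ to $v_1v_2v_3$ already witnesses that the triangle is $(L_\psi:3)$-colorable, and (\ref{eq:size}) gives $|L_\psi(v_1)|,|L_\psi(v_4)|\ge 5$ and $|L_\psi(v_2)|,|L_\psi(v_3)|\ge 8$ (here $\deg(v_2)=3$ with two neighbours in $H$, and $\deg(v_3)\le 4$ with three neighbours in $H$). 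Lemma~\ref{lollipop} then yields an $(L_\psi:3)$-coloring of $H$, contradicting Proposition~\ref{obs:extend}. If you insist on deleting $\{v_2,v_4\}$ you must genuinely resolve the pinned case; the cleaner fix is simply to delete less, so that the needed triangle coloring comes for free.
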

\begin{proof}
Suppose for a contradiction that $v_3$ has such a neighbor $v_4$.
Note that $v_1v_4, v_2v_4\not\in E(G)$, since $G$ does not contain $4$-cycles.
Let $H$ be the subgraph of $G$ induced by $\{v_1,v_2,v_3,v_4\}$.

By the minimality of $G$, the graph $G-v_4$ has an $(L:3)$-coloring $\psi_0$.
Let $\psi$ be the restriction of $\psi_0$ to $G-V(H)$, and consider the list assignment $L_\psi$ for $H$.
By the existence of $\psi_0$, we conclude that the triangle $v_1v_2v_3$ is $(L_\psi:3)$-colorable.
Note that $|L_\psi(v_2)|,|L_\psi(v_3)|\ge 8$ and $|L_\psi(v_1)|,|L_\psi(v_4)|\ge 5$ by (\ref{eq:size}).
By Lemma~\ref{lollipop}, the graph $H$ is $(L_\psi:3)$-colorable.  However, this contradicts Proposition~\ref{obs:extend}.
\end{proof}

\begin{lemma}
\label{34-path}
Let $(G,L,Z)$ be a minimal counterexample.  Then $G$ does not contain a path $P=v_1\ldots v_k$ disjoint from $Z$ such that
$5\le k\le 7$, $\deg (v_1) = \deg (v_3) = \deg (v_k) = 3$, $\deg(v_2)=4$, and $\deg(v_i)=4$ for $4\le i\le k-1$.
\end{lemma}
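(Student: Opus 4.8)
The plan is to combine the minimality of the counterexample with Lemma~\ref{3-4-3---3}, which is tailored precisely to the path in the statement. Suppose for contradiction that $G$ contains such a path, and, among all paths satisfying the hypotheses for some $k$ with $5\le k\le 7$, choose $P=v_1\ldots v_k$ with $k$ as small as possible. The heart of the argument is to show that $P$ is an induced path of $G$; once this is established, the conclusion follows quickly.

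To prove that $P$ is induced, I would go through the possible chords $v_iv_j$, $j\ge i+2$. Since $G$ has no $4$- or $5$-cycle, such a chord together with the $v_i$--$v_j$ subpath of $P$ can only close a triangle ($j=i+2$), a $6$-cycle (necessarily $v_1\ldots v_6$ when $k\in\{6,7\}$, or $v_2\ldots v_7$ when $k=7$), or the $7$-cycle $v_1\ldots v_7$ when $k=7$. A $6$-cycle chord produces a $6$-cycle, disjoint from $Z$, all of whose vertices have degree at most $4$ and at least two of which have degree $3$, contradicting Lemma~\ref{6-cycle}; the $7$-cycle chord $v_1v_7$ produces the path $v_7v_1v_2v_3$, whose degree sequence is $3,3,4,3$, contradicting Lemma~\ref{33-path}. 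For a triangle chord $v_iv_{i+1}v_{i+2}$, I would argue case by case using three tools. First, if the triangle contains two vertices of degree $3$ (which happens exactly when it meets an end of $P$), then the chord together with the subpath of $P$ that avoids the degree-$4$ vertex of the triangle forms a path contradicting Lemma~\ref{33-path} (for instance $v_1v_3v_4\ldots v_k$ when the triangle is $v_1v_2v_3$). Second, if the triangle contains exactly one vertex of degree $3$, say $v_{i+1}$, then in several of these cases Lemma~\ref{tria3} applies with $v_{i+1}$ as apex, because one of $v_i,v_{i+2}$ then has a degree-$3$ neighbour lying outside the triangle and outside $Z$ (for example $v_1$ is such a neighbour of $v_2$ when the triangle is $v_2v_3v_4$). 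Third, in the remaining cases---including the triangle $v_4v_5v_6$ with $k=7$, all of whose vertices have degree $4$---replacing the middle vertex of the triangle by the chord yields a path satisfying the hypotheses of the present lemma with a strictly smaller value of $k$ (for instance $v_1v_2v_3v_4v_6v_7$, which has $6$ vertices), contradicting the choice of $k$. Altogether $P$ is induced.

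With $P$ induced, I would delete $v_3$: by the minimality of the counterexample, $G-v_3$ has an $(L:3)$-coloring $\psi_0$. Let $\psi$ be its restriction to $G-V(P)$ and form the list assignment $L_\psi$ on $P$. Since $\deg_G(v_3)=3$ while $\deg_P(v_3)=2$, and $\deg_G(v_i)-\deg_P(v_i)=2$ for every $i\ne 3$, inequality~(\ref{eq:size}) gives $|L_\psi(v_3)|\ge 8$ and $|L_\psi(v_i)|\ge 5$ for all $i\ne 3$; moreover the restriction of $\psi_0$ to $V(P)\setminus\{v_3\}$ is an $(L_\psi:3)$-coloring of $P-v_3$. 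Hence Lemma~\ref{3-4-3---3} shows that $P$ is $(L_\psi:3)$-colorable, which contradicts Proposition~\ref{obs:extend}.

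I expect the chord analysis to be the main obstacle: there are several genuinely different configurations, and for each one must identify the right auxiliary path or cycle and check that its degree sequence fits the hypotheses of Lemma~\ref{33-path}, Lemma~\ref{tria3}, or Lemma~\ref{6-cycle}, or else that it is a strictly shorter path of the same kind as $P$. The extension step itself is routine given Lemma~\ref{3-4-3---3} and Proposition~\ref{obs:extend}.
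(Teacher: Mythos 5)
Your proposal is correct and takes essentially the same route as the paper: the paper likewise shows $P$ is induced using Lemma~\ref{33-path}, Lemma~\ref{tria3}, Lemma~\ref{6-cycle} and the choice of a shortest such path, and then deletes $v_3$ and applies Lemma~\ref{3-4-3---3} together with Proposition~\ref{obs:extend}. The only blemish is your parenthetical claim that a triangle chord contains two degree-$3$ vertices exactly when it meets an end of $P$ (false for $v_5v_6v_7$ when $k=7$), but this is harmless since such triangles are covered by your second and third tools anyway.
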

\begin{proof}
Suppose for a contradiction that $G$ contains such a path $P$.
By Lemma~\ref{33-path}, the vertices $v_1$, $v_3$, and $v_k$ form an independent set.
By considering $P$ as short as possible, we can assume that $v_3\ldots v_k$ is an induced path.
Note that $v_2v_j\not\in E(G)$ for $5\le j\le k$ and $v_1v_i\not\in E(G)$ for $4\le i\le k-1$ by the absence of $4$- and $5$-cycles and by Lemma~\ref{6-cycle}.
Furthermore, $v_2v_4\not\in E(G)$ by Lemma~\ref{tria3}.
Hence, $P$ is an induced path.

By the minimality of $G$, the graph $G-v_3$ has an $(L:3)$-coloring $\psi_0$.
Let $\psi$ be the restriction of $\psi_0$ to $G-V(P)$, and consider the list assignment $L_\psi$ for $P$.
By the existence of $\psi_0$, we conclude that $P-v_3$ is $(L_\psi:3)$-colorable.
Note that $|L_\psi(v_3)|\ge 8$ and $|L_\psi(v_i)|\ge 5$ for $1\le i\le k$ by (\ref{eq:size}).
By Lemma~\ref{3-4-3---3}, the path $P$ is $(L_\psi:3)$-colorable.  However, this contradicts Proposition~\ref{obs:extend}.
\end{proof}

We now consider the neighborhoods of vertices of degree 4.

\begin{lemma}
\label{4-vertex}
Let $(G,L,Z)$ be a minimal counterexample and let $v$ be an internal vertex of $G$ of degree four.
Then $v$ has at most two internal neighbors of degree three.
\end{lemma}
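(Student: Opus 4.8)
The plan is to argue by contradiction. Suppose $v$ is an internal vertex of degree four with at least three internal neighbors of degree three; call three such neighbors $v_1,v_2,v_3$, and let $v_4$ be the fourth neighbor of $v$. By Lemma~\ref{33-path} applied to the paths $v_iv v_j$ (for distinct $i,j\in\{1,2,3\}$, both endpoints of degree $3$, $v$ of degree $4$), each such path of length two must have its endpoints adjacent; so $v_1v_2v_3$ would have to be a triangle. But then $vv_1v_2$ is a triangle with $\deg(v)=4$, $\deg(v_1),\deg(v_2)\le 4$ and $\deg(v_3)=3$, and $v_3$ is a neighbor of $v$ of degree $3$ — wait, that is not quite the configuration of Lemma~\ref{tria3}; instead, apply Lemma~\ref{tria3} to the triangle $v_1v_2v_3$ (all of degree $\le 4$, with, say, $v_2$ of degree $3$) and the vertex $v$, a neighbor of $v_1$ of degree $4$ — that is also not it. The clean way: among $v_1,v_2,v_3$ at least two, say $v_1,v_3$, are nonadjacent (a triangle on all three would force, via $\deg v_2=3$ and Lemma~\ref{tria3} with the triangle $vv_1v_2$ and the degree-$3$ neighbor $v_3$ of $v$... ) — I will instead directly invoke Lemma~\ref{33-path}: the path $v_1vv_3$ has $\deg(v_1)=\deg(v_3)=3$, $\deg(v)=4$, $k=3$, so Lemma~\ref{33-path} forces $v_1v_3\in E(G)$; the same reasoning forces $v_1v_2,v_2v_3\in E(G)$, so $\{v,v_1,v_2,v_3\}$ induces a $K_4$, which is nonplanar, a contradiction.

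So in fact the case $v$ has three internal degree-$3$ neighbors is impossible already, but the lemma as stated only forbids it, so let me re-read: the lemma says "at most two", i.e. we must rule out exactly the situation above, and the $K_4$ argument does precisely that. Hence the proof is: pick three internal degree-$3$ neighbors, apply Lemma~\ref{33-path} three times (to the three paths of length two through $v$) to conclude all three are pairwise adjacent, and observe $\{v,v_1,v_2,v_3\}$ then induces $K_4$, contradicting planarity (or: contradicting Lemma~\ref{conn}, since $K_4$ contains a nonfacial triangle in any plane embedding with a fourth vertex inside or outside).

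The main obstacle is making sure Lemma~\ref{33-path} genuinely applies to each path $v_ivv_j$: we need these to be paths disjoint from $Z$ (true, since $v_1,v_2,v_3$ are internal and $v$ is internal), with the correct degree pattern ($\deg v_i=\deg v_j=3$, $\deg v=4$), and then Lemma~\ref{33-path} with $k=3$ yields $v_iv_j\in E(G)$. One should also check that $v_1,v_2,v_3$ are genuinely distinct (they are, being distinct neighbors) and that no degeneracy (e.g. $v_i=v_j$) occurs. Once all three edges $v_1v_2,v_1v_3,v_2v_3$ are in place, $G[\{v,v_1,v_2,v_3\}]\cong K_4$, which cannot embed in the plane, giving the desired contradiction and completing the proof.
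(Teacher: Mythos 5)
There is a genuine gap: your invocation of Lemma~\ref{33-path} is not legitimate. For $k=3$ that lemma requires $\deg(v_1)=\deg(v_2)=\deg(v_3)=3$, i.e.\ the \emph{middle} vertex of the path must also have degree three; but in your paths $v_ivv_j$ the middle vertex is $v$, which has degree four, so the hypothesis fails and you cannot conclude $v_iv_j\in E(G)$. (There is no lemma in the paper forcing an edge between two degree-$3$ vertices at distance two through a $4$-vertex -- indeed such a configuration is perfectly possible; what the paper forbids is three of them around one $4$-vertex, and that requires a coloring argument, not a structural one.) The final step is also wrong on its own terms: $K_4$ is planar, so even if you had the three edges $v_1v_2,v_1v_3,v_2v_3$, ``nonplanarity of $K_4$'' is not a contradiction (the actual obstruction there would be the $4$-cycles such as $vv_1v_2v_3$, which are excluded by hypothesis, or Lemma~\ref{conn} on non-facial triangles).

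The paper's proof is a reducibility argument, which is what this lemma genuinely needs. First, if two of the neighbors, say $v_1,v_2$, were adjacent, then $vv_1v_2$ would be a triangle with $\deg(v)\le 4$, $\deg(v_2)=3$, and $v_3$ would be a degree-$3$ neighbor of $v$ outside $\{v_1,v_2\}\cup Z$, contradicting Lemma~\ref{tria3}; hence $\{v_1,v_2,v_3\}$ is independent. Then by minimality $G-\{v,v_1,v_2,v_3\}$ has an $(L:3)$-coloring $\psi$, the residual lists satisfy $|L_\psi(v)|\ge 8$ and $|L_\psi(v_i)|\ge 5$ by (\ref{eq:size}) and the degree assumptions, and Lemma~\ref{claw} shows the claw $G[\{v,v_1,v_2,v_3\}]$ is $(L_\psi:3)$-colorable, contradicting Proposition~\ref{obs:extend}. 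Your proposal contains no coloring step at all, so it cannot be repaired without switching to an argument of this kind.
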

\begin{proof}
Suppose for a contradiction that $v$ has three such neighbors $v_1, v_2,v_3\in V(G)\setminus Z$.
Note that $\{v_1,v_2,v_3\}$ is an independent set by Lemma~\ref{tria3}.
By the minimality of $G$, the graph $G-\{v,v_1,v_2,v_3\}$ has an $(L:3)$-coloring $\psi$.
Note that $|L_\psi(v)|\ge 8$ and $|L_\psi(v_i)|\ge 5$ for $1\le i\le 3$ by (\ref{eq:size}).
By Lemma~\ref{claw}, the subgraph $G[\{v,v_1,v_2,v_3\}]$ is $(L_\psi:3)$-colorable.  However, this contradicts Proposition~\ref{obs:extend}.
\end{proof}

Next, let us consider the neighborhoods of vertices of degree 5.

\begin{lemma}
\label{5-vertex}
Let $(G,L,Z)$ be a minimal counterexample and let $v$ be an internal vertex of $G$ of degree five.
If $v$ has four internal neighbors $v_1$, \ldots, $v_4$ of degree three, then
$G[\{v_1,\ldots,v_4\}]$ is a perfect matching.
\end{lemma}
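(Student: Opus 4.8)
First I would determine the structure of $G[\{v_1,\ldots,v_4\}]$. Since $v$ is adjacent to each of $v_1,\ldots,v_4$ and $G$ has no $4$-cycle, no $v_i$ can have two neighbours inside $\{v_1,\ldots,v_4\}$: if, say, $v_1v_2,v_1v_3\in E(G)$, then $v_2v_1v_3v$ is a $4$-cycle. Hence $G[\{v_1,\ldots,v_4\}]$ is a matching, with $0$, $1$, or $2$ edges, and it remains to rule out the cases of $0$ and $1$ edges. In each case I delete a small subgraph $H$ disjoint from $Z$, use minimality to $(L:3)$-colour $G-V(H)$ by some $\psi$, estimate the lists $L_\psi$ via \eqref{eq:size}, and apply one of the lemmas of Section~2 to colour $H$, contradicting Proposition~\ref{obs:extend}.

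For the case that $\{v_1,\ldots,v_4\}$ is independent, let $H=G[\{v,v_1,\ldots,v_4\}]\cong K_{1,4}$. Then $|L_\psi(v)|\ge 11-3(5-4)=8$ and $|L_\psi(v_i)|\ge 11-3(3-1)=5$, so $H$ is $(L_\psi:3)$-colourable by Lemma~\ref{claw5} (its first alternative), a contradiction.

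Suppose now the unique edge is $v_1v_2$; let $v_5$ be the fifth neighbour of $v$ and, for $i\in\{1,2\}$, let $u_i$ be the third neighbour of $v_i$. Using $4$-cycle-freeness one checks that $u_1,u_2,v_5$ are distinct, avoid $\{v,v_1,\ldots,v_5\}$, and are pairwise non-adjacent (e.g.\ $u_1v_1vv_5$, $u_1v_1v_2u_2$, $u_1v_1vv_3$ would be $4$-cycles), and by Lemma~\ref{33-path} applied to the path $v_2v_1u_1$ and its mirror, $u_1$ and $u_2$ have degree at least $4$. Take $H=G[\{v,v_1,\ldots,v_4\}]$, now equal to the triangle $vv_1v_2$ with pendant edges $vv_3$ and $vv_4$. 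Then $|L_\psi(v)|,|L_\psi(v_1)|,|L_\psi(v_2)|\ge 8$ and $|L_\psi(v_3)|,|L_\psi(v_4)|\ge 5$, so Lemma~\ref{claw5} (its second alternative, with $v_1,v_2$ in the role of its $v_3,v_4$) will give the desired contradiction once we know that the triangle $vv_1v_2$ is $(L_\psi:3)$-colourable. By Lemma~\ref{triangle} this is equivalent to $|L_\psi(v)\cup L_\psi(v_1)\cup L_\psi(v_2)|\ge 9$.

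That union can be smaller than $9$ only in the rigid situation $L_\psi(v)=L_\psi(v_1)=L_\psi(v_2)$, each of size exactly $8$; equivalently, $\psi$ colours $v_5$, $u_1$, $u_2$ by the triples complementary (within $L(v)$, $L(v_1)$, $L(v_2)$ respectively) to this common $8$-set. To exclude this I would argue that it cannot occur for \emph{every} extension: since $v_5$, $u_1$, $u_2$ are pairwise non-adjacent, one may delete one of them from $G-V(H)$, recolour $G$ minus that vertex by minimality, and re-extend choosing the deleted vertex a colour different from $\psi$'s; any such change leaves two of the three reduced lists distinct, hence with union of size at least $9$. The main obstacle is the degenerate sub-case where $v_5$, $u_1$, $u_2$ simultaneously have saturated available lists (so no single-vertex recolouring is free) --- this is exactly the place where one must exploit the fine structure, either by chaining the recolouring through one more neighbour or by showing that the absence of $4$- and $5$-cycles together with Lemmas~\ref{33-path}--\ref{34-path} forbids the configuration outright. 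Everything else is a direct application of the Section~2 lemmas and Proposition~\ref{obs:extend}.
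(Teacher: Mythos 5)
Your reduction of the structure of $G[\{v_1,\ldots,v_4\}]$ to a matching with $0$, $1$, or $2$ edges is correct, and your treatment of the independent case (delete all of $H=G[\{v,v_1,\ldots,v_4\}]$, use \eqref{eq:size}, apply the first alternative of Lemma~\ref{claw5}) is fine. The genuine gap is in the one-edge case: after deleting all of $V(H)$ you must verify the hypothesis of the second alternative of Lemma~\ref{claw5}, namely that the triangle $vv_1v_2$ is $(L_\psi:3)$-colorable, and your argument for this is not a proof. You correctly reduce it (via Lemma~\ref{triangle}) to $|L_\psi(v)\cup L_\psi(v_1)\cup L_\psi(v_2)|\ge 9$, but the recolouring plan for the bad case $L_\psi(v)=L_\psi(v_1)=L_\psi(v_2)$ does not work as stated: recolouring $G$ minus one of $v_5,u_1,u_2$ "by minimality" gives a brand-new colouring with no relation to $\psi$, so there is no reason the three reduced lists cannot again coincide (with a different common $8$-set), and you yourself flag an unresolved "saturated lists" sub-case. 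As written, the one-edge case is not closed, so the proof is incomplete.

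The paper avoids this difficulty entirely by choosing the deleted set more carefully: with the edge labelled $v_3v_4$, it deletes only the two non-triangle neighbours $\{v_1,v_2\}$ and lets $\psi_0$ be an $(L:3)$-colouring of $G-\{v_1,v_2\}$ obtained from minimality; then $\psi$ is the restriction of $\psi_0$ to $G-V(H)$, and the restriction of $\psi_0$ to $\{v,v_3,v_4\}$ is itself an $(L_\psi:3)$-colouring of the triangle $vv_3v_4$ (its colours automatically avoid everything $\psi$ places on outside neighbours). So the triangle-colorability hypothesis of Lemma~\ref{claw5} comes for free, the list sizes $|L_\psi(v)|\ge 8$, $|L_\psi(v_1)|,|L_\psi(v_2)|\ge 5$, $|L_\psi(v_3)|,|L_\psi(v_4)|\ge 2+3\deg_H(v_i)$ follow from \eqref{eq:size}, and Proposition~\ref{obs:extend} gives the contradiction; moreover this single choice of deleted set handles the no-edge and one-edge cases uniformly. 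If you replace your "delete all of $V(H)$ and recolour" step by this "delete only the two vertices outside the (potential) triangle" step, your argument becomes the paper's proof; without such a device, the union-size condition you are trying to force genuinely need not hold for the particular $\psi$ you fixed.
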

\begin{proof}
Suppose for a contradiction that $G[\{v_1,\ldots,v_4\}]$ is not a perfect matching.
Since $G$ does not contain $4$-cycles, it follows that $G[\{v_1,\ldots,v_4\}]$ has at most one edge;
we can assume that it contains no edge other than $v_3v_4$.  Let $H=G[\{v,v_1,v_2,v_3,v_4\}]$.

By the minimality of $G$, the graph $G-\{v_1,v_2\}$ has an $(L:3)$-coloring $\psi_0$.
Let $\psi$ be the restriction of $\psi_0$ to $G-V(H)$, and consider the list assignment $L_\psi$ for $H$.
By the existence of $\psi_0$, we conclude that $H-\{v_1,v_2\}$ is $(L_\psi:3)$-colorable. Note that $|L_\psi(v)|\ge 8$, $|L_\psi(v_i)|\ge 5$ for $1\leq i\leq 2$ and  $|L_\psi(v_i)|\ge 2+3\deg_H(v_i)$ for $3\leq i\leq 4$ by (\ref{eq:size}).
By Lemma~\ref{claw5}, the graph $H$ is $(L_\psi:3)$-colorable.  However, this contradicts Proposition~\ref{obs:extend}.
\end{proof}

\begin{lemma}
\label{5-vertex3}
Let $(G,L,Z)$ be a minimal counterexample and let $v$ be an internal vertex of $G$ of degree five.
If $v$ has three internal neighbors $v_1$, $v_2$, and $v_3$ of degree three, then
$v_1$ has no neighbor of degree three not belonging to $Z$ and not adjacent to $v$.
\end{lemma}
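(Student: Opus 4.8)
The proof follows the same pattern as Lemmas~\ref{4-vertex} and~\ref{5-vertex}: assume the configuration exists, delete a small set of low-degree vertices, extend an $(L:3)$-coloring of the rest using one of the structural list-coloring lemmas of Section~2, and contradict Proposition~\ref{obs:extend}. The plan is to let $u_1$ be the hypothetical degree-$3$ neighbor of $v_1$ with $u_1\notin Z$ and $u_1\not\sim v$, set $H=G[\{v,v_1,v_2,v_3,u_1\}]$, and apply Lemma~\ref{claw53} with the roles matching: the center vertex $v$ has neighbors $v_1,v_2,v_3$, and $u_1$ hangs off $v_1$. To invoke Lemma~\ref{claw53} I need $|L_\psi(v)|\ge 5$, $|L_\psi(u_1)|\ge 5$, and $|L_\psi(v_i)|\ge 2+3\deg_H(v_i)$ for $i=1,2,3$.

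First I would verify the geometry of $H$. Since $\deg(u_1)=3$, $u_1\notin Z$, and $u_1\not\sim v$, the edge $vu_1$ is absent; since $G$ has no $4$-cycles, $u_1$ is adjacent to none of $v_2,v_3$ (a path $v_2 v v_1 u_1$ plus such an edge would be a $4$-cycle, as would $v_3 v v_1 u_1$), and $u_1v_1\in E(G)$. The edge $v_1v_2$ or $v_1v_3$ cannot be present (again a $4$-cycle through $v$), so within $\{v_1,v_2,v_3\}$ the only possible edge is $v_2v_3$, which is exactly the "possibly one edge between $v_1,v_2,v_3$" allowed by Lemma~\ref{claw53} — and I must double-check that if $v_2v_3\in E(G)$ then this triangle $vv_2v_3$ is facial (Lemma~\ref{conn}), so that no further edges among $H$'s vertices appear. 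Thus $\deg_H(v_1)=2$ (neighbors $v,u_1$) and $\deg_H(v_2)=\deg_H(v_3)\in\{1,2\}$ depending on whether $v_2v_3\in E(G)$. Since each $v_i$ has degree $3$ in $G$, we get $\deg_G(v_i)-\deg_H(v_i)\le 1$ for $i=1,2,3$, hence by~(\ref{eq:size}) $|L_\psi(v_i)|\ge 11-3(\deg_G(v_i)-\deg_H(v_i))\ge 11-3\ge 8$; but more to the point I need the bound $2+3\deg_H(v_i)$, and since $\deg_G(v_i)=3$, $\deg_G(v_i)-\deg_H(v_i)=3-\deg_H(v_i)$, so $|L_\psi(v_i)|\ge 11-3(3-\deg_H(v_i))=2+3\deg_H(v_i)$ exactly. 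For $v$: $\deg_G(v)=5$, $\deg_H(v)=3$, so $|L_\psi(v)|\ge 11-6=5$; for $u_1$: $\deg_G(u_1)=3$, $\deg_H(u_1)=1$, so $|L_\psi(u_1)|\ge 11-6=5$. All hypotheses of Lemma~\ref{claw53} are met.

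The remaining step is to produce the partial coloring and the "$H-v_1$ is colorable" hypothesis of Lemma~\ref{claw53}. By minimality of $G$, the graph $G-v_1$ has an $(L:3)$-coloring $\psi_0$; restrict it to $\psi:=\psi_0\restriction_{G-V(H)}$, which is an $(L:3)$-coloring of $G-V(H)$, and observe that $\psi_0$ witnesses that $H-v_1 = G[\{v,v_2,v_3,u_1\}]$ is $(L_\psi:3)$-colorable. Then Lemma~\ref{claw53} gives an $(L_\psi:3)$-coloring of $H$, contradicting Proposition~\ref{obs:extend}. I expect no real obstacle here: the only things requiring care are the no-$4$-cycle deductions pinning down which edges of $H$ can exist (so that the $\deg_H$ values, and hence the list-size bounds, are exactly what Lemma~\ref{claw53} demands) and confirming that deleting the single vertex $v_1$ is the right choice — one deletes $v_1$ precisely because Lemma~\ref{claw53} is stated as "if $H-v_1$ is colorable then $H$ is," mirroring Lemma~\ref{claw5} used for Lemma~\ref{5-vertex}.
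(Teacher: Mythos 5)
Your proposal is structurally identical to the paper's proof: delete $v_1$, take an $(L:3)$-coloring $\psi_0$ of $G-v_1$ by minimality, restrict to $\psi$ on $G-V(H)$ with $H=G[\{v,v_1,v_2,v_3,u_1\}]$, check the list sizes via (\ref{eq:size}), apply Lemma~\ref{claw53}, and contradict Proposition~\ref{obs:extend}. However, one geometric deduction in your write-up is wrong: you claim that $v_1v_2$ or $v_1v_3$ cannot be an edge because it would create a $4$-cycle through $v$. It would not --- since $v_1$ and $v_2$ are both adjacent to $v$, the edge $v_1v_2$ closes a \emph{triangle} $vv_1v_2$, and triangles are allowed in $G$; so these configurations are genuinely possible and your case analysis (which pins $\deg_H(v_1)=2$ and allows only $v_2v_3$ as an extra edge) does not literally cover them. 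What the absence of $4$-cycles actually gives, and what the paper states, is that \emph{at most one} of the edges $v_1v_2$, $v_2v_3$, $v_1v_3$ is present (any two of them would close a $4$-cycle through $v$), together with $u_1v_2,u_1v_3\notin E(G)$ (a $4$-cycle through $v_1$ and $v$).

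The good news is that this slip is easily repaired rather than fatal: Lemma~\ref{claw53} is stated with ``possibly one edge between the vertices $v_1$, $v_2$, and $v_3$,'' and its first case explicitly handles $v_1v_2\in E(H)$; moreover your own list-size computation $|L_\psi(v_i)|\ge 11-3(3-\deg_H(v_i))=2+3\deg_H(v_i)$ is valid for whatever value $\deg_H(v_i)$ takes, so the hypotheses of Lemma~\ref{claw53} are met in all admissible configurations once you replace the false exclusion by the correct ``at most one edge among $v_1,v_2,v_3$'' statement. With that correction your argument coincides with the paper's.
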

\begin{proof}
Suppose for a contradiction that $v_1$ has a neighbor $u_1\not\in Z\cup N_G(v)$ of degree three.
Since $G$ does not contain $4$-cycles, it follows that $u_1v_2,u_1v_3\not\in V(G)$ and that
$G$ contains at most one of the edges $v_1v_2$, $v_2v_3$, and $v_1v_3$.
Let $H=G[\{v,v_1,v_2,v_3,u_1\}]$.

By the minimality of $G$, the graph $G-v_1$ has an $(L:3)$-coloring $\psi_0$.
Let $\psi$ be the restriction of $\psi_0$ to $G-V(H)$, and consider the list assignment $L_\psi$ for $H$.
By the existence of $\psi_0$, we conclude that $H-v_1$ is $(L_\psi:3)$-colorable.
Note that $|L_\psi(v)|\ge 5$, $|L_\psi(u_1)|\ge 5$ and  $|L_\psi(v_i)|\ge 2+3\deg_H(v_i)$ for $1\leq i\leq 3$ by (\ref{eq:size}).
By Lemma~\ref{claw53}, the graph $H$ is $(L_\psi:3)$-colorable.  However, this contradicts Proposition~\ref{obs:extend}.
\end{proof}

\begin{lemma}\label{5-vertex43}
Let $(G,L,Z)$ be a minimal counterexample, and let $P=u_1v_1vv_2u_2$ be a path in $G$ vertex-disjoint from
$Z$.  If $vu_2\not\in E(G)$, $\deg(v)=5$, $\deg(u_1)=\deg(u_2)=\deg(v_2)=3$, and $\deg(v_1)=4$, then $v$ has no internal neighbors
of degree three distinct from $v_2$ and $u_1$.
\end{lemma}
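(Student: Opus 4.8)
The plan is to argue by contradiction. Suppose $v$ has an internal neighbor $v_3$ of degree three with $v_3\notin\{v_2,u_1\}$. Since $\deg(v_1)=4\neq 3$ and $vu_2\notin E(G)$, the vertex $v_3$ is also distinct from $v_1$ and $u_2$, hence from every vertex of $P$. The object to aim for is the induced subgraph $H=G[\{u_1,v_1,v,v_2,u_2,v_3\}]$: I want to show that $H$ is exactly of the shape handled by Lemma~\ref{claw543} (the path $u_1v_1vv_2u_2$, the pendant edge $vv_3$, and possibly the chord $v_1v_3$), and then derive a contradiction via minimality.

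First I would list the edges among $\{u_1,v_1,v,v_2,u_2,v_3\}$ that cannot be present. Because $G$ has no $4$- or $5$-cycle, each of $u_1v_2$, $u_1v_3$, $v_1u_2$ and $u_2v_3$ would close a $4$-cycle (through $P$, or through $v_3$ and $v$), and $u_1u_2$ would close the $5$-cycle $u_1v_1vv_2u_2$; and $vu_2\notin E(G)$ by hypothesis. This leaves precisely three remaining candidate edges to dispose of: $u_1v$, $v_1v_2$ and $v_2v_3$. If $u_1v\in E(G)$, then $v$ is an internal vertex of degree five having three internal degree-three neighbors $u_1$, $v_2$, $v_3$; applying Lemma~\ref{5-vertex3} with $v_2$ playing the role of its $v_1$ then forbids $v_2$ from having a degree-three neighbor outside $Z\cup N_G(v)$ — but $u_2$ is exactly such a neighbor, a contradiction. (This is why the statement must exclude the case $v_3=u_1$.) If $v_1v_2\in E(G)$, then $u_2v_2v_1u_1$ is a path on four vertices with consecutive degrees $3,3,4,3$, contradicting Lemma~\ref{33-path}.

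The edge $v_2v_3$ calls for a separate deletion rather than passing through $H$. If $v_2v_3\in E(G)$, delete $\{u_2,v_2,v_3\}$; by minimality $G-\{u_2,v_2,v_3\}$ has an $(L:3)$-coloring $\psi$. Since $u_2v_3\notin E(G)$ (it would close the $4$-cycle $u_2v_2vv_3$), the induced subgraph $G[\{u_2,v_2,v_3\}]$ is the path $u_2v_2v_3$, and by (\ref{eq:size}) we have $|L_\psi(u_2)|,|L_\psi(v_3)|\ge 5$ and $|L_\psi(v_2)|\ge 8$; Lemma~\ref{3-3-3} then colors this path, contradicting Proposition~\ref{obs:extend}.

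Once $u_1v$, $v_1v_2$ and $v_2v_3$ are excluded, the only edges of $H$ are $u_1v_1$, $v_1v$, $vv_2$, $v_2u_2$, $vv_3$ and possibly $v_1v_3$, so $H$ has exactly the structure required by Lemma~\ref{claw543}. To finish, I would mimic the earlier lemmas of this section: by minimality $G-v_2$ has an $(L:3)$-coloring $\psi_0$; let $\psi$ be its restriction to $G-V(H)$, so that $H-v_2$ is $(L_\psi:3)$-colorable. Using (\ref{eq:size}), together with $\deg_H(v)=3$, $\deg_H(v_1)\in\{2,3\}$ and $\deg_H(v_3)\in\{1,2\}$ according to whether $v_1v_3\in E(G)$, one checks $|L_\psi(u_1)|,|L_\psi(v)|,|L_\psi(u_2)|\ge 5$, $|L_\psi(v_2)|\ge 8$, $|L_\psi(v_3)|\ge 2+3\deg_H(v_3)$ and $|L_\psi(v_1)|\ge 3\deg_H(v_1)-1$, so Lemma~\ref{claw543} gives an $(L_\psi:3)$-coloring of $H$, contradicting Proposition~\ref{obs:extend}. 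The main obstacle is the bookkeeping for the three extra edges — in particular the realization that $u_1v\in E(G)$ is not contradictory on its own and must be eliminated via Lemma~\ref{5-vertex3}, and that $v_2v_3\in E(G)$ cannot be handled inside $H$ but needs its own deletion argument.
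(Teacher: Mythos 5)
Your proof is correct and follows essentially the same route as the paper: rule out the extra edges so that $H=G[\{u_1,v_1,v,v_2,u_2,v_3\}]$ has exactly the shape of Lemma~\ref{claw543}, then delete $v_2$, restrict the coloring, and contradict Proposition~\ref{obs:extend}, with the exclusions of $vu_1$ and $v_1v_2$ done exactly as in the paper via Lemmas~\ref{5-vertex3} and~\ref{33-path}. The only cosmetic difference is the edge $v_2v_3$: the paper kills it by applying Lemma~\ref{33-path} to the path $u_2v_2v_3$ (its conclusion $u_2v_3\in E(G)$ would create a $4$-cycle), whereas you re-derive that special case by a separate deletion argument with Lemma~\ref{3-3-3}, which is also valid.
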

\begin{proof}
Suppose for a contradiction that $v$ has a neighbor $v_3\not\in \{v_2,u_1\}\cup Z$ of degree three.
Note that $G$ does not contain the edge $v_1v_2$ by Lemma~\ref{33-path}
and the edge $vu_1$ by Lemma~\ref{5-vertex3}.
Since $G$ does not contain $4$- or $5$-cycles, $P$ is an induced path.
By Lemma~\ref{33-path} and absence of $4$- and $5$-cycles, $v_3$ has no neighbors among $u_1$, $v_2$, and $u_2$.
Consequently, since $G$ does not contain $4$- or $5$-cycles,
$H=G[\{u_1,v_1,v,v_2,u_2,v_3\}]$ consists of the path $P$, the edge $vv_3$, and possibly the edge $v_1v_3$.

By the minimality of $G$, the graph $G-v_2$ has an $(L:3)$-coloring $\psi_0$.
Let $\psi$ be the restriction of $\psi_0$ to $G-V(H)$, and consider the list assignment $L_\psi$ for $H$.
By the existence of $\psi_0$, we conclude that $H-v_2$ is $(L_\psi:3)$-colorable. Note that $|L_\psi(v)|\ge 5$, $|L_\psi(u_i)|\ge 5$ for $1\leq i\leq 2$, $|L_\psi(v_1)|\ge 3\deg_H(v_1)-1$, $|L_\psi(v_2)|\ge 8$ and  $|L_\psi(v_3)|\ge 2+3\deg_H(v_3)$ by (\ref{eq:size}).
By Lemma~\ref{claw543}, the graph $H$ is $(L_\psi:3)$-colorable.  However, this contradicts Proposition~\ref{obs:extend}.
\end{proof}

\section{Discharging}\label{sec-discharge}

\subsection{Notation}
Consider a minimal counterexample $(G,L,Z)$.
We say that the faces of $G$ of length at least $6$ are \emph{$6^+$-faces}.
Since $G$ is $2$-connected by Lemma~\ref{conn}, every face of $G$ is bounded by a cycle,
and in particular, every face of $G$ is either a $3$-face or a $6^+$-face.
A vertex $v\in V(G)$ is a \emph{$k$-vertex} if $v$ is internal and $\deg(v)=k$.
We say that $v$ is a \emph{$k^+$-vertex} if either $v\in Z$ or $\deg(v)\ge k$.  %Two cycles sharing at least a common edge are said to be \emph{adjacent}.

Let $v_1vv_2$ be a part of the cycle bounding a $6^+$-face $f$ of $G$,
and for $i\in\{1,2\}$, let $f_i\neq f$ be the face incident with the edge $vv_i$.
If both $f_1$ and $f_2$ are $3$-faces, we say that $v$ is \emph{type-II incident} with $f$.
If exactly one of $f_1$ and $f_2$ is a $3$-face, we say that $v$ is \emph{type-I incident} with $f$.
If neither $f_1$ nor $f_2$ is a $3$-face, we say that $v$ is \emph{type-0 incident} with $f$.
See Figure~\ref{fig-incid} for an illustration.

\begin{figure}[!htb]
\centering
{\includegraphics[height=0.27\textwidth]{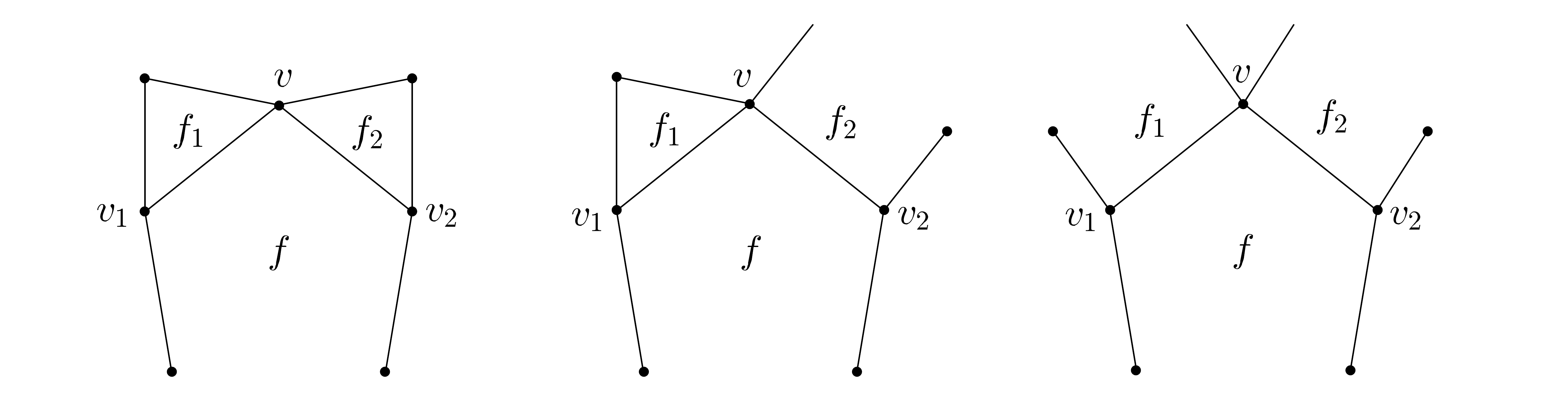}}

\caption{Type-II, type-I, and type-0 incidences.}\label{fig-incid}
\end{figure}

Suppose that in the situation described in the previous paragraph,
$v$ is a 4-vertex type-I incident with $f$, where $f_1$ is a $3$-face,
$v_1$ is a $4^+$-vertex and $v_2$ is a $5^+$-vertex.
Let $v_2vx$ be the subpath of the cycle bounding $f_2$ centered at $v$.
If $x$ is a $3$-vertex, then we say $v$ is {\em type-I-1 incident} with $f$.
If $x$ is a $4$-vertex, $f_2$ is bounded by a $6$-cycle $xvv_2w_1w_2w_3$,
$w_1$ and $w_3$ are $3$-vertices and  $w_2$ is a $4$-vertex type-II incident with $f_2$,
then we say $v$ is {\em type-I-2 incident} with $f$.  See Figure~\ref{fig-type-Ia} for an illustration.

\begin{figure}[!htb]
\centering
{\includegraphics[height=0.31\textwidth]{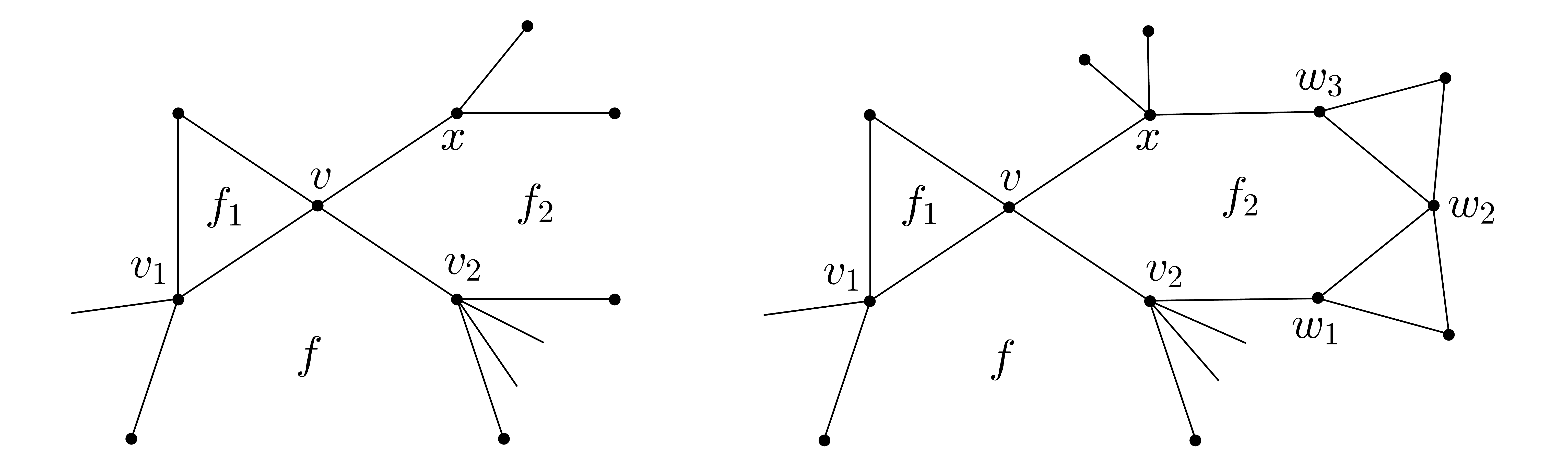}}

\caption{Type-I-1 and type-I-2 incidences.}\label{fig-type-Ia}
\end{figure}

Let $v_0v_1vv_2v_3$ be a subpath of the cycle bounding a $6^+$-face $f$,
where $vv_1$ is incident with a $3$-face, $vv_2$ is not incident with a $3$-face,
$v$ is a $5$-vertex and $v_1$ is a $3$-vertex.  Let $v_1$, $v_2$, $x_1$, $x_2$, $x_3$ be the neighbors of $v$
listed in cyclic order according to their drawing in $G$.
If both $v_2$ and $v_3$ are $3$-vertices, then we say $v$ is {\em type-I-3 incident} with $f$.
If $v_0$ and $x_1$ are $3$-vertices and $x_1$ is contained in a triangle $x_1yz$
for $4^+$-vertices $y$ and $z$ distinct from $x_2$ and $x_3$,
then we say $v$ is {\em type-I-4 incident} with $f$.
See Figure~\ref{fig-type-Ib} for an illustration.

\begin{figure}[!htb]
\centering
{\includegraphics[height=0.39\textwidth]{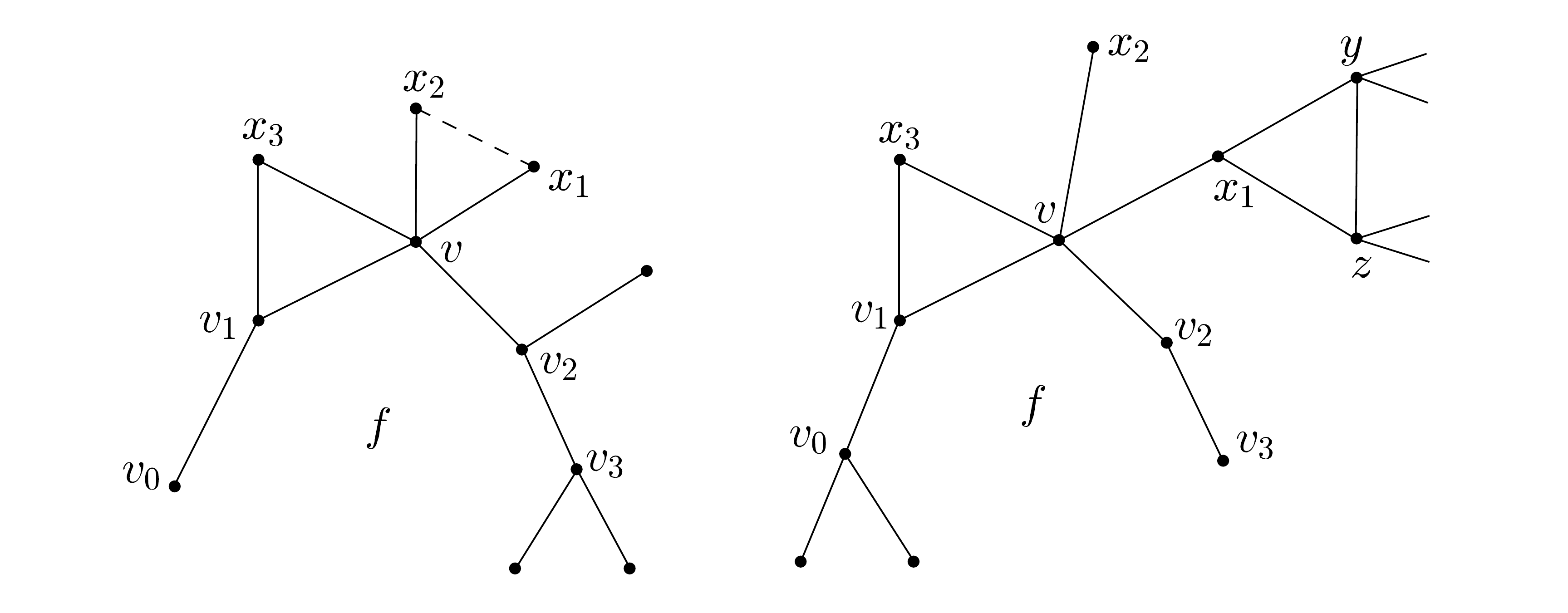}}

\caption{Type-I-3 and type-I-4 incidences.}\label{fig-type-Ib}
\end{figure}

Let $v_1vv_2v_3$ be a part of the cycle bounding a $6^+$-face $f$,
where $v$ is a $5$-vertex type-0 incident with $f$.  Let $v_1$, $v_2$, $x_1$, $x_2$, $x_3$ be the neighbors of $v$
listed in cyclic order according to their drawing in $G$.
If $v_1$ and $v_2$ are $3$-vertices, then we say $v$ is {\em type-0-1 incident} with $f$.
If $v_2$ and $v_3$ are $3$-vertices, then we say $v$ is {\em type-0-2 incident} with $f$.
If both $x_1$ and $x_3$ belong to triangles containing only $3$-vertices distinct from $x_2$,
then we say $v$ is {\em type-0-3 incident} with $f$.
See Figure~\ref{fig-type-0} for an illustration.

\begin{figure}[!htb]
\centering
{\includegraphics[height=0.35\textwidth]{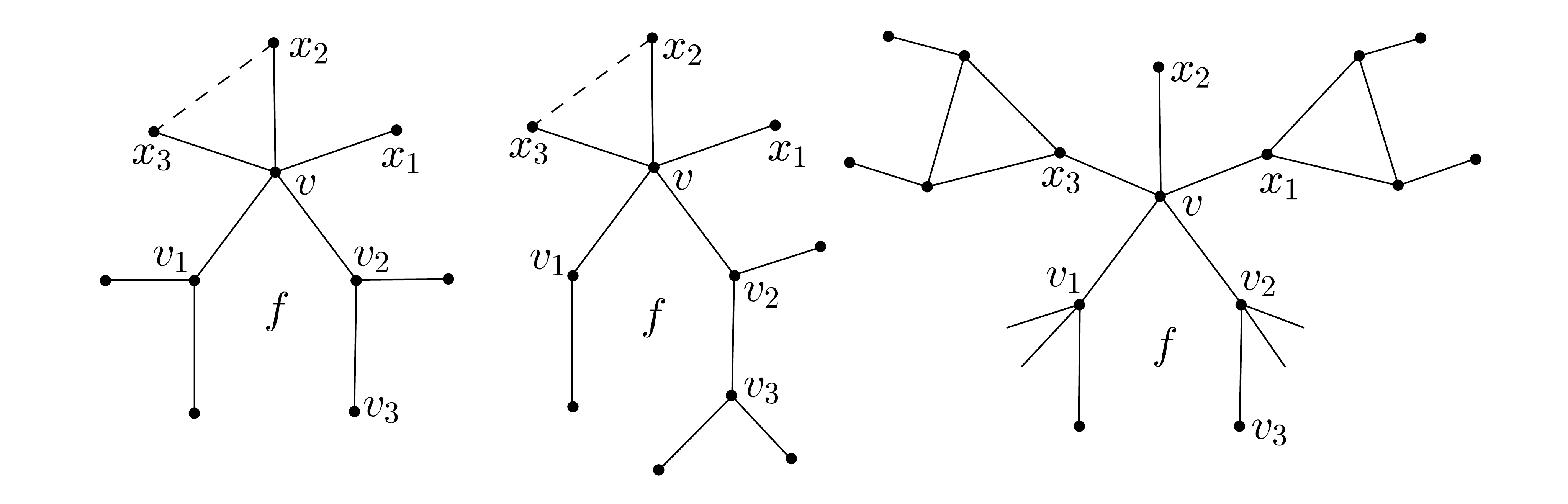}}
\caption{Type-0-1, type-0-2, and type-0-3 incidences.}\label{fig-type-0}
\end{figure}

\subsection{Initial charge and discharging rules}

Now we proceed by the discharging method.
Consider a minimal counterexample $(G,L,Z)$.
Set the initial charge of every vertex $v$ of $G$ to be $\ch_0(v)=2\deg (v)-6$,
and the initial charge of every face $f$ of $G$ to be $\ch_0(f)=|f|-6$.
By Euler's formula,
\begin{align}
\sum_{v\in V(G)}\ch_0(v)+\sum_{f\in F(G)}\ch_0(f)&=\sum_{v\in V(G)}(2\deg (v)-6)+\sum_{f\in F(G)}(|f|-6)\nonumber\\
&=6(|E(G)|-|V(G)|-|F(G)|)=-12.\label{eq:sum}
\end{align}
We redistribute the charges according to the following rules:

\begin{description}
\item[Rt] If a $6^+$-face $f$ shares an edge with a $3$-face $f'$, then $f$ sends 1 to $f'$.
\item[R4] Suppose $v$ is a 4-vertex and $f$ is a $6^+$-face incident with $v$.
\begin{itemize}
\item[(II)] If $v$ is type-II incident with $f$, then $v$ sends $1$ to $f$.
\item[(I)] Suppose $v$ is type-I incident with $f$. If $v$ is type-I-1 or type-I-2 incident with $f$,
then $v$ sends $1/2$ to $f$, otherwise $v$ sends $1$ to $f$.
\item[(0)] Suppose $v$ is type-0 incident with $f$.  If either $v$ is not incident with any $3$-faces
or $v$ is type-I-1 or type-I-2 incident with another $6^+$-face, then
$v$ sends $1/2$ to $f$.
\end{itemize}
\item[R5] Suppose $v$ is a 5-vertex and $f$ is a $6^+$-face incident with $v$.
\begin{itemize}
\item[(II)] Suppose $v$ is type-II incident with $f$.  If $v$ is type-I-3 incident with another $6^+$-face,
then $v$ sends $1$ to $f$, otherwise $v$ sends $2$ to $f$.
\item[(I)] Suppose $v$ is type-I incident with $f$.  If $v$ is type-I-3 or type-I-4 incident with $f$,
then $v$ sends $3/2$ to $f$, otherwise $v$ sends $1$ to $f$.
\item[(0)] Suppose $v$ is type-0 incident with $f$.  If $v$ is type-0-1 or type-0-2 incident with $f$,
then $v$ sends $1$ to $f$; otherwise, if $v$ is not type-0-3 incident with $f$, then $v$ sends $1/2$ to $f$.
\end{itemize}
\item[R6] Suppose $v$ is a $6^+$-vertex and $f$ is a $6^+$-face incident with $v$.
\begin{itemize}
\item[(II)] If $v$ is type-II incident with $f$, then $v$ sends $2$ to $f$.
\item[(I)] If $v$ is type-I incident with $f$, then $v$ sends $3/2$ to $f$.
\item[(0)] If $v$ is type-0 incident with $f$, then $v$ sends $1$ to $f$.
\end{itemize}
\end{description}
In the situations of rules R4, R5, and R6, we write $\ch(v\to f)$ for the amount of
charge sent from $v$ to $f$.

\subsection{Final charges of vertices}

Let $\ch$ denote the charge assignment after performing the charge redistribution
using the rules Rt, R4, R5, and R6.

\begin{lemma}\label{charge-4vertex}
Let $(G,L,Z)$ be a minimal counterexample and let $v$ be a vertex of $G$.
If $v$ is a $4$-vertex, then $\ch(v)\ge 0$.
\end{lemma}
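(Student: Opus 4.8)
The plan is to track the total charge leaving $v$. Since $v$ is a $4$-vertex, $\ch_0(v)=2\deg(v)-6=2$, and among the rules Rt, R4, R5, R6 only R4 ever moves charge out of a $4$-vertex, so it suffices to show that $v$ sends at most $2$ in total. First I would record the structure of the faces around $v$. By Lemma~\ref{conn} every face of $G$ is bounded by a cycle, so $v$ is incident with exactly four distinct faces, each of which is a $3$-face or a $6^+$-face. Moreover, two $3$-faces incident with $v$ cannot share an edge $vu$: the two third vertices together with $v$ and $u$ would span a $4$-cycle. Since a $3$-face incident with $v$ occupies one of the four corners at $v$, the $3$-faces at $v$ occur in pairwise non-consecutive corners, so $v$ is incident with $0$, $1$, or $2$ of them; I would treat these three cases in turn.

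If $v$ is incident with no $3$-face, all four incident faces are $6^+$-faces and $v$ is type-$0$ incident with each of them, so by the first alternative in rule R4(0) it sends $1/2$ to each, for a total of $2$. If $v$ is incident with two $3$-faces, write them as $vv_1v_2$ and $vv_3v_4$ with $v_1,v_2,v_3,v_4$ the neighbours of $v$ in cyclic order; the other two incident faces sit in the corners $v_2vv_3$ and $v_4vv_1$, and at each of them the two faces on the far sides of the two corner-edges are exactly the two $3$-faces, so $v$ is type-II incident with both and sends $1+1=2$ by R4(II). In either case $\ch(v)=2-2=0$.

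The remaining case, which I expect to be the only delicate one, is when $v$ is incident with exactly one $3$-face: then $v$ is type-I incident with two $6^+$-faces $g_1,g_2$ and type-$0$ incident with a third $6^+$-face $g_0$. Rule R4(I) sends at most $1$ to each of $g_1$ and $g_2$, so if $v$ sends nothing to $g_0$ we are done. If $v$ does send charge to $g_0$, then in rule R4(0) the first alternative fails (as $v$ is incident with a $3$-face), so $v$ must be type-I-1 or type-I-2 incident with some $6^+$-face other than $g_0$; the only candidates are $g_1$ and $g_2$ (the only other $6^+$-faces at $v$, and precisely the ones $v$ is type-I incident with), and then R4(0) sends exactly $1/2$ to $g_0$ while R4(I) sends only $1/2$ — not $1$ — to that $g_i$. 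Either way the total is at most $1/2+1+1/2=2$, so $\ch(v)\ge 0$. The main point to get right is this bookkeeping: the condition in R4(0) that causes the half-unit to flow to the type-$0$ face is exactly the condition under which R4(I) halves one of the gifts to a type-I face, so the two effects cancel and the bound $2$ is never exceeded; everything else is a routine enumeration of corners at a degree-$4$ vertex.
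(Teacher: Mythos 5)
Your proof is correct and follows essentially the same route as the paper: a case analysis on whether $v$ is incident with $0$, $1$, or $2$ triangular faces (two consecutive $3$-faces being excluded by the absence of $4$-cycles), with the only delicate case being one $3$-face, where the half-unit sent to the type-$0$ face by R4(0) is offset by the halved gift under R4(I) to the type-I-1/type-I-2 face. The bookkeeping matches the paper's computation exactly, so no gap.
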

\begin{proof}
Note that $\ch_0(v)=2$. Let $v_1$, \ldots, $v_4$ be the neighbors of $v$ listed in cyclic order according to their drawing in $G$.
For $1\le i\le 4$, let $f_i$ be the face whose boundary contains the path $v_ivv_{i+1}$ (where $v_5=v_1$).
Since $G$ contains no $4$-cycles, we can without loss of generality assume that $f_2$ and $f_4$ are $6^+$-faces.
If $f_1$ and $f_3$ are $3$-faces, then $v$ is type-II incident with them and $\ch(v\to f_2)=\ch(v\to f_4)=1$ by R4(II),
and $\ch(v)=2-2\times 1=0$.  Hence, suppose that $f_3$ is a $6^+$-face.

Suppose now that $f_1$ is a $3$-face.  If $v$ neither type-I-1 nor type-I-2 incident with $f_2$ and $f_4$,
then $\ch(v\to f_3)=0$ by R4(0) and $\ch(v\to f_2)=\ch(v\to f_4)=1$ by R4(I), and
$\ch(v)=2-2\times 1=0$.  If $v$ is type-I-1 or type-I-2 incident with say $f_2$, then
$\ch(v\to f_3)=1/2$ by R4(0) and $\ch(v\to f_2)=1/2$ and $\ch(v\to f_4)\le 1$ by R4(I),
and $\ch(v)\ge 2-1-2\times 1/2=0$.

Finally, if $v$ is not incident with any $3$-faces, then $\ch(v\to f_i)=1/2$ by R4(0) for $1\le i\le 4$
and $\ch(v)=2-4\times 1/2=0$.
\end{proof}

\begin{lemma}\label{charge-5vertex}
Let $(G,L,Z)$ be a minimal counterexample and let $v$ be a vertex of $G$.
If $v$ is a $5$-vertex, then $\ch(v)\ge 0$.
\end{lemma}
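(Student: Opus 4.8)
The plan is to bound the total charge $v$ sends out. A $5$-vertex receives no charge under Rt, R4, R5, or R6, so $\ch(v)=\ch_0(v)-\sum_f\ch(v\to f)$, the sum being over the $6^+$-faces incident with $v$, and $\ch_0(v)=2\cdot5-6=4$; hence it suffices to show $\sum_f\ch(v\to f)\le 4$. I would list the neighbours $v_1,\dots,v_5$ of $v$ in the cyclic order in which they are drawn around $v$, and for $1\le i\le 5$ let $f_i$ be the face whose boundary contains $v_ivv_{i+1}$ (indices modulo $5$). As in Lemma~\ref{charge-4vertex}, each $f_i$ is a $3$-face or a $6^+$-face, and since $G$ has no $4$-cycle no two cyclically consecutive $f_i$'s are $3$-faces, so at most two of them are. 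The argument then splits on the number $t\in\{0,1,2\}$ of $3$-faces among $f_1,\dots,f_5$: in each case I read off from R5 the maximal charge $v$ can send to each incident $6^+$-face according to its incidence type, and bound the sum, using the structural lemmas of Section~3 to rule out the configurations that would push the sum above $4$.

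For $t=2$, $v$ is type-II incident with one $6^+$-face $g$ and type-I incident with two $6^+$-faces $g_1,g_2$, so R5 gives at most $2+\tfrac32+\tfrac32=5$. If $v$ sends only $1$ to $g$, the total is $1+\tfrac32+\tfrac32=4$; otherwise $v$ sends $2$ to $g$, which by R5(II) forces $v$ not to be type-I-3 incident with $g_1$ or $g_2$, and if in addition $v$ is type-I-4 incident with neither, then it sends exactly $1$ to each, again totalling $4$. The sole remaining possibility, $v$ type-II incident with $g$ and type-I-4 incident with (say) $g_1$, I would exclude by unfolding the type-I-4 pattern (two degree-$3$ neighbours of $v$ plus a degree-$3$ vertex in a triangle on two $4^+$-vertices) together with the two $3$-faces witnessing the type-II incidence, and checking that the resulting subgraph is always one forbidden by Lemma~\ref{33-path}, \ref{tria3}, \ref{5-vertex}, or \ref{5-vertex3}; the absence of $4$- and $5$-cycles is what pins the subgraph into a shape these lemmas cover.

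For $t=1$, $v$ is type-I incident with two $6^+$-faces and type-0 incident with two $6^+$-faces, so R5 gives at most $\tfrac32+\tfrac32+1+1=5$; this is the case I expect to be the main obstacle. The key point is that $v$ sends $\tfrac32$ to a type-I face only when that incidence is type-I-3 or type-I-4, and each such pattern fixes either three degree-$3$ neighbours of $v$ (type-I-3) or two degree-$3$ neighbours of $v$ plus a degree-$3$ vertex in a triangle on $4^+$-vertices (type-I-4). By Lemma~\ref{5-vertex}, $v$ cannot have four degree-$3$ neighbours unless they form a perfect matching, which would create a second $3$-face and contradict $t=1$; so $v$ has at most three degree-$3$ neighbours, restricting which refined types can co-occur. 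A type-0-1 or type-0-2 incidence (the ones costing $1$ rather than $\tfrac12$) needs a further degree-$3$ neighbour of $v$, or a degree-$3$ vertex on the face boundary adjacent to a degree-$3$ neighbour of $v$; placing this next to the degree-$3$-rich neighbourhood forced by a type-I-3 or type-I-4 incidence gives a configuration forbidden by Lemma~\ref{5-vertex3} or Lemma~\ref{5-vertex43} (and, when a short low-degree path appears, by Lemma~\ref{33-path} or \ref{34-path}). Working through the finitely many surviving arrangements should yield $\sum_f\ch(v\to f)\le 4$ in each.

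For $t=0$, $v$ is type-0 incident with all five faces, and R5(0) sends $1$, $\tfrac12$, or $0$ to each. Since $v$ lies on no $3$-face, no two degree-$3$ neighbours of $v$ are adjacent, and by Lemma~\ref{5-vertex} it has at most three degree-$3$ neighbours (four would form a perfect matching, producing two $3$-faces). I would then inspect the few cyclic arrangements of these at most three vertices: at most three of $f_1,\dots,f_5$ can be type-0-1 or type-0-2, and when exactly three are, the other two are type-0 but neither type-0-1 nor type-0-2, hence cost at most $\tfrac12$ each, giving a total of at most $3+\tfrac12+\tfrac12=4$; the arrangements that would make a fourth face cost $1$ are ruled out by Lemmas~\ref{33-path}, \ref{tria3}, and \ref{6-cycle}. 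The genuinely delicate work throughout is in $t=1$ (with the type-I-4 subcase of $t=2$ close behind): there the bookkeeping is not merely arithmetic, and one must convert each refined incidence type into an explicit local subgraph and match every expensive arrangement to a reducibility lemma of Section~3, relying heavily on the exclusion of $4$- and $5$-cycles to force those subgraphs into exactly the form the lemmas handle.
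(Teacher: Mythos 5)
Your overall plan coincides with the paper's: bound the charge $v$ sends out, list the neighbours $v_1,\dots,v_5$ and faces $f_1,\dots,f_5$, and split on the number $t\in\{0,1,2\}$ of incident $3$-faces, invoking Lemmas~\ref{5-vertex} and \ref{5-vertex3} to limit the expensive incidence types. (A small remark on your $t=2$ case: the impossibility of a type-I-4 incidence there is not a reducibility statement to be matched against Lemmas~\ref{33-path}, \ref{tria3}, \ref{5-vertex}, or \ref{5-vertex3}; with two $3$-faces at $v$, the vertex $x_1$ from the type-I-4 definition lies on one of these $3$-faces, so by the absence of $4$-cycles the only triangle through $x_1$ contains $x_2$, and the definition simply cannot be met.)

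The genuine gap is in your $t=0$ case. You assert that at most three of $f_1,\dots,f_5$ can be type-0-1 or type-0-2 and that a fourth such incidence is ruled out by Lemmas~\ref{33-path}, \ref{tria3}, and \ref{6-cycle}. That is false. Consider $v$ with exactly two $3$-vertex neighbours $v_2$ and $v_4$, where the remaining two neighbours of $v_2$ are adjacent $3$-vertices (a pendant triangle of $3$-vertices hanging on $v_2$), and similarly for $v_4$. None of your cited lemmas excludes this: Lemma~\ref{33-path} is satisfied precisely because the two $3$-neighbours of $v_2$ (resp.\ $v_4$) are adjacent; Lemma~\ref{tria3} requires two vertices of degree at most $4$ in a triangle together with an outside $3$-neighbour and gives nothing here; Lemma~\ref{6-cycle} needs a $6$-cycle all of whose vertices have degree at most $4$, which is unavailable since every relevant face passes through the $5$-vertex $v$. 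In this configuration $f_1,f_2,f_3,f_4$ are all type-0-2, each receiving $1$, which already exhausts $\ch_0(v)=4$; your accounting therefore cannot be closed, and indeed the paper does not rule the configuration out. Instead, the paper shows (using Lemmas~\ref{5-vertex}, \ref{5-vertex3}, and \ref{33-path}) that four type-0-1/0-2 incidences force exactly these two pendant triangles of $3$-vertices, whence $v$ is type-0-3 incident with the fifth face $f_5$ and $\ch(v\to f_5)=0$ by the exception in rule R5(0) — this is the entire reason the type-0-3 incidence was defined — giving $\ch(v)=4-4\cdot 1=0$. Since your proposal never invokes the type-0-3 clause, this case would fail as written; the missing idea is precisely that the fifth face receives nothing, not that the fourth expensive face cannot exist.
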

\begin{proof}
Note that $\ch_0(v)=4$. Let $v_1$, \ldots, $v_5$ be the neighbors of $v$ listed in cyclic order according to their drawing in $G$.
For $1\le i\le 5$, let $f_i$ be the face whose boundary contains the path $v_ivv_{i+1}$ (where $v_6=v_1$).
Since $G$ contains no $4$-cycles, we can without loss of generality assume that $f_2$, $f_4$, and $f_5$
are $6^+$-faces.

Suppose first that $f_1$ and $f_3$ are $3$-faces.  Then $v$ is type-II incident with $f_2$ and
type-I incident with $f_4$ and $f_5$.  Note that $v$ cannot be type-I-4 incident with $f_4$ and $f_5$.
If $v$ is not type-I-3 incident with $f_4$ and $f_5$,
then $\ch(v\to f_2)=2$ by R5(II) and $\ch(v\to f_4)=\ch(v\to f_5)=1$ by R5(I), and thus $\ch(v)=4-2-2\times 1=0$.
If $v$ is type-I-3 incident with $f_4$ or $f_5$, then $\ch(v\to f_2)=1$ by R5(II) and $\ch(v\to f_4),\ch(v\to f_5)\le 3/2$
by R5(I), and thus $\ch(v)\ge 4-1-2\times 3/2=0$.

Hence, we can assume that $f_3$ is a $6^+$-face.  Suppose now that $f_1$ is a $3$-face.
If $v$ is type-I-3 or type-I-4 incident with neither $f_2$ nor $f_5$, then $\ch(v\to f_i)\le 1$ by
R5(I) and R5(0) for $2\le i\le 5$, and $\ch(v)\ge 4-4\times 1=0$.  If $v$ is type-I-3 incident with $f_2$,
then $v_4$, $v_5$, and $v_1$ are $4^+$-vertices by Lemma~\ref{5-vertex3},
and thus $v$ is neither type-I-3 nor type-I-4 incident with $f_5$ and $v$ is neither type-0-1 nor type-0-2
incident with $f_4$.
If $v$ is type-I-4 incident with $f_2$, then $v_3$, $v_5$, and $v_1$ are $4^+$-vertices by Lemma~\ref{5-vertex3},
and thus $v$ is neither type-I-3 nor type-I-4 incident with $f_5$ and $v$ is neither type-0-1 nor type-0-2
incident with $f_3$ and $f_4$.
In either case, $\ch(v\to f_2)=3/2$ and $\ch(v\to f_5)=1$ by R5(I) and $\ch(v\to f_4)\le 1/2$
and $\ch(v\to f_3)\le 1$ by R5(0), and $\ch(v)\ge 4-3/2-2\times 1-1/2=0$.

Finally, let us consider the case that $v$ is incident with no $3$-faces.  If $v$ is type-0-1 or type-0-2
incident with at most three faces, then $\ch(v)\ge 4-3\times 1-2\times 1/2=0$ by R5(0).
Hence, suppose that $v$ is type-0-1 or type-0-2 incident with at least 4 faces.
By Lemma~\ref{5-vertex}, $v$ is adjacent to at most three $3$-vertices.  If
$v$ is adjacent to three $3$-vertices, then by Lemma~\ref{5-vertex3} $v$ is not type-0-2 incident
with any faces, and clearly $v$ is type-0-1 incident with at most two faces, which is a contradiction.
Hence, $v$ is adjacent to at most two $3$-vertices, and by symmetry, we can assume that $v_5$ and $v_1$
are $4^+$-vertices.  Then $v$ is neither type-0-1 nor type-0-2 incident with $f_5$, and thus
it is type-0-1 or type-0-2 incident with $f_1$, \ldots, $f_4$.  It cannot be type-0-1 incident with $f_1$
and $f_4$, and thus it is type-0-2 incident with these faces; i.e., $v_2$ and $v_4$ are $3$-vertices
and have $3$-vertex neighbors $x_2$ and $x_4$ incident with $f_1$ and $f_4$.  By Lemma~\ref{5-vertex3}, $v_3$ is a
$4^+$-vertex.  Consequently, $v$ is also type-0-2 incident with $f_2$ and $f_3$, and thus
$v_2$ and $v_4$ have $3$-vertex neighbors $x'_2$ and $x'_4$ incident with $f_2$ and $f_3$.
By Lemma~\ref{33-path}, $x_2x'_2\in E(G)$ and $x_4x'_4\in E(G)$.  But then $v$ is type-0-3 incident with $f_5$
and $\ch(v\to f_5)=0$ by R5(0); and thus $\ch(v)=4-4\times 1=0$.
\end{proof}

\begin{lemma}
\label{vertex}
Let $(G,L,Z)$ be a minimal counterexample and let $v$ be a vertex of $G$.
If $v$ is internal, then $\ch(v)\ge 0$.  If $v\in Z$, then $\ch(v)=\deg(v)-6$.
\end{lemma}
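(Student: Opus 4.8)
The plan is to case-split on $\deg(v)$ and on whether $v\in Z$, using that no discharging rule ever sends charge \emph{to} a vertex: charge leaves a vertex only through rules R4, R5, R6, so $\ch(v)=\ch_0(v)-\sum_f\ch(v\to f)$ with the sum over the $6^+$-faces incident with $v$. By Lemma~\ref{MD} an internal vertex has degree at least $3$, and by Lemma~\ref{conn} every vertex has degree at least $2$. If $v$ is an internal $3$-vertex, it is not a $4$-vertex, a $5$-vertex, or a $6^+$-vertex, so none of R4, R5, R6 applies and $\ch(v)=\ch_0(v)=0$. If $v$ is an internal $4$-vertex or $5$-vertex, the bound $\ch(v)\ge 0$ is precisely Lemma~\ref{charge-4vertex} or Lemma~\ref{charge-5vertex}. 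This leaves the case that $v$ is a $6^+$-vertex, that is, either $v\in Z$ or $v$ is internal with $\deg(v)\ge 6$; then only rule R6 controls the charge leaving $v$, and it suffices to show $v$ sends exactly $\deg(v)$ in total.

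To see this, write $d=\deg(v)$ and list the faces $f_1,\dots,f_d$ around $v$ in cyclic order; since $G$ is $2$-connected and has no $4$- or $5$-cycles, each $f_i$ is a $3$-face or a $6^+$-face. The key structural point is that no two $3$-faces incident with $v$ share an edge at $v$: consecutive triangles $vv_{i-1}v_i$ and $vv_iv_{i+1}$ would yield the $4$-cycle $vv_{i-1}v_iv_{i+1}$ (the only degenerate situation, $G=K_3$, is excluded since such a graph is trivially $(L:3)$-colorable). By R6, when $f_i$ is a $6^+$-face we have $\ch(v\to f_i)=1+\tfrac{1}{2}s_i$, where $s_i\in\{0,1,2\}$ is the number of $3$-faces among $f_{i-1},f_{i+1}$ (equal to $2$, $3/2$, $1$ for a type-II, type-I, type-0 incidence respectively). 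If $t$ denotes the number of $3$-faces at $v$, then summing over the $6^+$-faces at $v$ gives $\sum_f\ch(v\to f)=(d-t)+\tfrac{1}{2}\sum_i s_i$, and a double count shows $\sum_i s_i=2t$: each $3$-face at $v$ meets exactly two edges at $v$, and by the structural point the face on the other side of each of these edges is a $6^+$-face, so each $3$-face is counted twice and no $3$-face is counted otherwise. Hence the total charge sent is $(d-t)+t=d$, so $\ch(v)=(2d-6)-d=d-6$; this is the claimed value $\deg(v)-6$ when $v\in Z$, and it is nonnegative when $v$ is internal, because then $d\ge 6$.

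The main obstacle is the bookkeeping concentrated in the last case. One has to be careful that the case distinction (internal $3$-, $4$-, $5$-vertices versus $6^+$-vertices, the latter absorbing every vertex of $Z$) is genuinely exhaustive and disjoint under the definitions of ``$k$-vertex'' and ``$k^+$-vertex,'' and the double count must produce the value $\deg(v)$ \emph{exactly} rather than merely a lower bound, since the $Z$-part of the statement is an equality. Minor degenerate configurations around low-degree vertices of $Z$ should be kept in mind, but they are subsumed by the same computation once $G=K_3$ has been ruled out.
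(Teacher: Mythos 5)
Your proof is correct and follows essentially the same route as the paper: internal $3$-vertices keep charge $0$, internal $4$- and $5$-vertices are handled by Lemmas~\ref{charge-4vertex} and \ref{charge-5vertex}, and for $6^+$-vertices (including all of $Z$) one computes that exactly $\deg(v)$ units leave $v$ under R6. Your double count $\sum_i s_i=2t$ is just the paper's identity $d_{II}+d_I/2=t$ (equivalently $2d_{II}+d_I=2t$), which you justify a bit more explicitly via the absence of consecutive triangles at $v$.
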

\begin{proof}
By Lemma~\ref{MD}, if $v$ is internal then $\deg (v)\geq 3$.
If $v$ is a $3$-vertex, then $\ch(v)=\ch_0(v)=0$.
If $v$ is a $4$- or $5$-vertex, then $\ch(v)\ge 0$ by Lemmas~\ref{charge-4vertex} and \ref{charge-5vertex}.

Hence, suppose that $v$ is a $6^+$-vertex, incident with $t$ $3$-faces,
and type-II, type-I and type-0 incident with $d_{II}$, $d_I$, and $d_0$ $6^+$-faces, respectively.
Note that $d_{II}+d_I/2=t$.  By R6, we have
\begin{align*}
\ch(v)&=\ch_0(v)-d_{II}\times 2-d_I\times 3/2-d_0\\
&=\ch_0(v)-(d_{II}+d_I+d_0+t)=\ch_0(v)-\deg(v)=\deg(v)-6.
\end{align*}
If $v$ is internal, then $\deg(v)\ge 6$, and thus $\ch(v)\ge 0$.
\end{proof}

\subsection{Final charge of faces}

Let $f$ be a $6^+$-face.
A subpath $S=u_0u_1\ldots u_t$ of the cycle bounding $f$ with at least two vertices
is called a \emph{segment} of $f$
if $u_1$, \ldots, $u_{t-1}$ are type-II incident with $f$, and $u_0$ and $u_t$
are type-I incident with $f$.
In particular, for $1\le i\le t$, the edge $u_{i-1}u_i$ is incident with a $3$-face.
Note that the segments of $f$ are pairwise vertex-disjoint.
Let us define $\ch(S)=-t+\sum_{i=0}^t \ch(u_i\to f)$;
note that $\ch(S)$ denotes the amount of charge received by $f$ from vertices of
the segment, minus the amount sent by the rule Rt to $3$-faces incident with edges of $S$.
If $\ch(S)<0$, then we say $S$ is a \emph{negative segment}.
A \emph{$t$-segment} is a segment with $t$ edges.

\begin{proposition}
\label{segment}
Let $(G,L,Z)$ be a minimal counterexample.  Let $S=u_0\ldots u_t$ be a negative $t$-segment of a $6^+$-face $f$ of $G$,
where $\deg(u_0)\le \deg(u_t)$.
Then all vertices of $S$ are internal, and either
\begin{itemize}
\item both $u_0$ and $u_t$ are $3$-vertices and $\ch(S)=-1$, or
\item $t\ge 2$, $u_0$ is a $3$-vertex, $u_t$ is a $4$-vertex type-I-1 or type-I-2 incident with $f$, and $\ch(S)=-1/2$.
\end{itemize}
Furthermore, if either $t\le 3$, or $t\le 5$ and $\ch(S)=-1$, then $u_1$, \ldots, $u_{t-1}$ are $4$-vertices.
\end{proposition}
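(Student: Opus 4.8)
The plan is to compute $\ch(S)$ one vertex at a time from the discharging rules. Since $S$ has $t+1$ vertices, rewrite
$$\ch(S)=1+\sum_{i=0}^{t}\bigl(\ch(u_i\to f)-1\bigr),$$
and bound each summand. The interior vertices $u_1,\dots,u_{t-1}$ are type-II incident with $f$, and I would first observe that a $3$-vertex cannot be type-II incident with a $6^+$-face: if a vertex $w$ of degree $3$ with $f$-neighbours $x,y$ had both edges $wx$, $wy$ on $3$-faces, then since $\deg(w)=3$, since $G$ has no non-facial triangle (Lemma~\ref{conn}), and since $f$ is not a triangle, both those $3$-faces would be incident with the third neighbour $z$ of $w$, giving the $4$-cycle $xwyz$ --- impossible. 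Hence every $u_i$ with $1\le i\le t-1$ is a $4^+$-vertex, so by rules R4(II), R5(II), R6(II) it sends at least $1$ to $f$, indeed exactly $1$ if it is a $4$-vertex or a $5$-vertex that is type-I-3 incident with another $6^+$-face (I will call such a vertex a \emph{special $5$-vertex}) and exactly $2$ otherwise; thus $\ch(u_i\to f)-1\in\{0,1\}$. Each endpoint $u_0,u_t$ is type-I incident with $f$, so by rules R4(I), R5(I), R6(I) it sends $0$ if a $3$-vertex, $1/2$ or $1$ if a $4$-vertex, $1$ or $3/2$ if a $5$-vertex, and $3/2$ if a $6^+$-vertex (in particular if it lies in $Z$); so $\ch(u_j\to f)-1\in\{-1,-1/2,0,1/2\}$, equalling $-1$ only for a $3$-vertex and $-1/2$ only for a $4$-vertex that is type-I-1 or type-I-2 incident with $f$.

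Next I would extract the dichotomy. Since $\ch(S)<0$ and all the quantities above are half-integers, $\sum_{i=0}^{t}(\ch(u_i\to f)-1)\le-3/2$; as the interior summands are nonnegative, the two endpoint summands already sum to $\le-3/2$, forcing them to be $(-1,-1)$ or $(-1,-1/2)$ and every interior summand to be $0$ (else $\ch(S)\ge0$). So each $u_i$, $1\le i\le t-1$, is a $4$-vertex or a special $5$-vertex; $\ch(S)=-1$ in the first case, where $u_0$ and $u_t$ are $3$-vertices, and $\ch(S)=-1/2$ in the second, where $\deg(u_0)\le\deg(u_t)$ makes $u_0$ the $3$-vertex and $u_t$ the $4$-vertex type-I-1 or type-I-2 incident with $f$. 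In that second case the definition of those incidences makes $u_{t-1}$ --- the neighbour of $u_t$ lying on the $3$-face at $u_t$ --- a $4^+$-vertex, so $t\ge2$, since for $t=1$ that neighbour would be the $3$-vertex $u_0$. All vertices of $S$ are internal: an endpoint with summand $-1$ or $-1/2$ is a $3$- or $4$-vertex, hence internal, and an interior vertex in $Z$ would be a $6^+$-vertex with summand $1$, contradicting that all interior summands vanish.

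Finally, for the last assertion I would show that when $t\le3$, or $t\le5$ and $\ch(S)=-1$, no interior vertex is a special $5$-vertex; this is where the real work lies. Suppose $u_i$ ($1\le i\le t-1$) is special, type-I-3 incident with a $6^+$-face $g$. Reading off the cyclic order of the five neighbours of $u_i$, one finds that $u_i$ is incident with exactly two $3$-faces, namely those on $u_iu_{i-1}$ and $u_iu_{i+1}$, and with three $6^+$-faces ($f$, $g$, and one more); that $u_i$ has two off-segment neighbours $p,q$ which are $3$-vertices; that $p$ lies in a triangle with $u_i$ and one of $u_{i-1}$, $u_{i+1}$; and that $q$ has a $3$-vertex neighbour $q'$ not adjacent to $u_i$ (it lies on $g$). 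If one of $u_{i-1}$, $u_{i+1}$ is a $3$-vertex --- which holds whenever $\ch(S)=-1$ and $i\in\{1,t-1\}$, the endpoints then being $3$-vertices --- then $u_i$ has three internal degree-$3$ neighbours, and Lemma~\ref{5-vertex3} applied to $q$ forces $q'\in Z$, contradicting that $q'$ is a $3$-vertex. This disposes of every position when $t\le3$ except $(\ch(S),t,i)=(-1/2,3,2)$, and every position when $\ch(S)=-1$, $t\in\{4,5\}$ except $i\in\{2,\dots,t-2\}$. In each leftover case $u_{i-1}$ and $u_{i+1}$ are $4^+$-vertices, and I would conclude by analysing the triangle $u_ipu_j$ (for the appropriate $u_j\in\{u_{i-1},u_{i+1}\}$), the $3$-face on the other $f$-edge at $u_j$, and the endpoint structure of $S$ --- using that the endpoints are $3$-vertices (or a $3$-vertex and a $4$-vertex) and that all interior vertices are $4$-vertices or special $5$-vertices --- to locate a short path or triangle forbidden by one of Lemmas~\ref{tria3}, \ref{4-vertex}, \ref{33-path}, \ref{34-path}. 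This bounded but laborious case analysis is the main obstacle; the rest is routine bookkeeping with the rules Rt, R4, R5, R6.
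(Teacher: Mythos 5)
Your accounting for the main dichotomy is correct and essentially the paper's: normalize the charge of $S$ vertex by vertex, observe that interior vertices are $4^+$-vertices type-II incident with $f$ contributing at least their share, and read off from R4--R6 that negativity forces the endpoints to be a pair of $3$-vertices (charge $-1$) or a $3$-vertex together with a $4$-vertex that is type-I-1 or type-I-2 incident with $f$ (charge $-1/2$, whence $t\ge 2$ and internality). Your reduction of the ``furthermore'' clause to excluding interior $5$-vertices that are type-I-3 incident with another $6^+$-face, and your elimination of such a vertex whenever one of its segment-neighbours $u_{i-1},u_{i+1}$ is a $3$-vertex endpoint via Lemma~\ref{5-vertex3}, is also sound and coincides with the paper's step showing $i\ge 2$ (and, symmetrically, $i\le t-2$ when $\ch(S)=-1$).

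The gap is in the leftover cases you yourself isolate, namely $(\ch(S),t,i)=(-1/2,3,2)$ and $\ch(S)=-1$ with $t\in\{4,5\}$, $i\in\{2,\dots,t-2\}$: there you only announce a ``bounded but laborious case analysis'' using Lemmas~\ref{tria3}, \ref{4-vertex}, \ref{33-path}, \ref{34-path}, without carrying it out, and these lemmas do not obviously apply. For instance, if $u_2$ is the special $5$-vertex and its off-segment $3$-neighbour $p$ forms a triangle with $u_2$ and $u_1$, then Lemma~\ref{tria3} fails because $u_2$ has degree $5$, Lemma~\ref{4-vertex} gives $u_1$ only the two $3$-neighbours $u_0$ and $p$ (no contradiction), and neither Lemma~\ref{33-path} nor Lemma~\ref{34-path} fits the degree pattern along any path through $u_2$, $p$, $q$, $q'$. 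The paper closes exactly this case with a reducibility lemma tailored for it, Lemma~\ref{5-vertex43}: applied to the path $u_{i-2}u_{i-1}u_i q q'$ (where $u_{i-2}$ is the $3$-vertex endpoint, $u_{i-1}$ is a $4$-vertex by the already-established $i\ge 2$, and $u_iq'\notin E(G)$ since the triangle $u_iqq'$ would be non-facial), it yields that $u_i$ has no internal $3$-neighbour besides $q$ and $u_{i-2}$, contradicting the existence of $p$; this gives $i\ge 3$ and, symmetrically when both ends are $3$-vertices, $i\le t-3$, which is incompatible with $t\le 3$, respectively $t\le 5$. Without invoking Lemma~\ref{5-vertex43} (or proving an equivalent reducible configuration), the ``furthermore'' part of the statement --- which is needed later, e.g.\ in the proof of Lemma~\ref{6face} --- remains unproved.
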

\begin{proof}
Let $\beta_0=\beta_t=0$ and $\beta_i=1$ for $1\le i \le t-1$.
Note that $\ch(S)=-1+\sum_{i=0}^t (\ch(u_i\to f)-\beta_i)$.
For $1\le i\le t-1$, the edges $u_{i-1}u_i$ and $u_iu_{i+1}$ are incident with $3$-faces, and thus $u_i$ is a $4^+$-vertex type-II incident with $f$;
hence, we have $\ch(u_i\to f)\ge 1$ by R4(II), R5(II), and R6(II).
Consequently, $\ch(u_i\to f)-\beta_i\ge 0$ for $0\le i\le t$ and $\ch(S)\ge -1$.
Since $S$ is negative, we have $\ch(u_i\to f)<\beta_i+1$ for $0\le i\le t$,
and by R6(II), R6(I), R5(I), and R4(I), we conclude that all vertices of $S$ are internal of degree at most $5$,
both $u_0$ and $u_t$ have degree at most $4$, and if they are $4$-vertices, then they are type-I-1 or type-I-2 incident with $f$.
Furthermore, by R5(II), if $u_i$ is a $5$-vertex for some $i\in\{1,\ldots, t-1\}$, then $u_i$ is type-I-3 incident with another
$6^+$-face, so that $\ch(u_i\to f)=\beta_i$.
By R4(I), if $u_i$ is a $4$-vertex for some $i\in\{0,t\}$, then $\ch(u_i\to f)=\beta_i+1/2$, and thus
either both $u_0$ and $u_t$ are $3$-vertices and $\ch(S)=-1$, or $u_0$ is a $3$-vertex and $u_t$ is a $4$-vertex
and $\ch(S)=-1/2$.  In the latter case, $u_t$ is type-I-1 or type-I-2 incident with $f$, and in particular $u_{t-1}$ is
a $4^+$-vertex, and consequently $t\ge 2$.

Suppose now that some vertex $u_i$ with $i\in\{1,\ldots,t-1\}$ is a $5$-vertex; as we observed,
$u_i$ is type-I-3 incident with another $6^+$-face.  By Lemma~\ref{5-vertex3}, we have $i\ge 2$,
and by Lemma~\ref{5-vertex43}, we have $i\ge 3$.
If $\ch(S)=-1$, then $u_t$ is a $3$-vertex, and a symmetric
argument shows that neither $u_{t-1}$ nor $u_{t-2}$ is a $5$-vertex.
Consequently, if either $t\le 3$, or $t\le 5$ and $\ch(S)=-1$, then $u_1$, \ldots, $u_{t-1}$ are $4$-vertices.
\end{proof}

We say two segments of the same $6^+$-face $f$ are \emph{adjacent} if an edge of the cycle bounding $f$ joins their ends.

\begin{proposition}
\label{charge}
Let $(G,L,Z)$ be a minimal counterexample and let $f$ be a $6^+$-face of $G$. The following propositions hold.
\begin{itemize}
\item[(1)] If $S$ is a segment of $f$ adjacent to a negative $1$-segment, then $\ch(S)\ge 0$. Additionally,
if $S$ is a $1$-segment, then $\ch(S)\ge 1/2$.
\item[(2)] Suppose $uvw$ is a subpath of the cycle bounding $f$, where $uv$ is incident with a $3$-face.
If $w$ is type-0 incident with $f$ and $v$ is a $4^+$-vertex, then
$\ch(v\to f)+\ch(w\to f)\geq 1$ (see Figure~\ref{fig-charge}(a)).
\item[(3)] Suppose $uvw$ is a subpath of the cycle bounding $f$, where both $v$ and $w$ are type-0 incident with $f$
and $v$ is a $4$-vertex.
Let $u$, $w$, $x$, $y$ be the neighbors of $v$ listed in cyclic order according to their drawing in $G$.
If $u$ is a $3$-vertex, $w$ is a $5^+$-vertex, and both $x$ and $y$ are $4^+$-vertices,
then $\ch(v\to f)+\ch(w\to f)\geq 1$ (see Figure~\ref{fig-charge}(b)).
\item[(4)] Suppose $uvwx$ is a subpath of the cycle bounding $f$, where $uv$ is incident with a $3$-face
and $u$ and $v$ are $3$-vertices.  If $wx$ is incident with a $3$-face, then let $T=\{w\}$, otherwise let $T=\{w,x\}$.
Then either $\sum_{z\in T} \ch(z\to f)\ge 1$ or both $w$ and $x$ are $4$-vertices type-0 incident with $f$.
\end{itemize}

\end{proposition}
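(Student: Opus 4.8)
The plan is to prove each of (1)--(4) by a finite case analysis. For every vertex named in the statement we determine, from its degree and its incidence type with the relevant faces, which of the rules Rt, R4, R5, R6 governs the charge it sends, and we must exclude the ``light'' configurations: a $3$-vertex, which sends nothing; a $4$-vertex type-I-1 or type-I-2 incident with a $6^+$-face, which sends only $1/2$; a $5$-vertex type-0-3 incident with a $6^+$-face, which sends nothing; and a $4$-vertex lying on a $3$-face that is not type-I-1/I-2 incident with any of its other $6^+$-faces, which sends nothing by R4(0). Two elementary facts recur: (i) if a vertex $v$ is type-I incident with a $6^+$-face $f$ and one of its two $f$-cycle-neighbours is a $3$-vertex, then $v$ is not type-I-1 or type-I-2 incident with $f$ (those definitions require both $f$-cycle-neighbours of $v$ to be $4^+$-vertices), so $v$ sends at least $1$ to $f$ unless $v$ itself is a $3$-vertex; and (ii) if $u,v,w$ are three consecutive $3$-vertices on the boundary cycle of a $6^+$-face and the edge $uv$ lies on a $3$-face, then by Lemma~\ref{33-path} we get $uw\in E(G)$, so $uvw$ bounds a face by Lemma~\ref{conn}; as the only faces incident with $uv$ are that $6^+$-face and the given $3$-face, the triangle $uvw$ must coincide with the $3$-face, forcing $w$ to equal its apex, which is impossible since the apex is the third neighbour of the $3$-vertex $v$ and hence distinct from $w$. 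Thus in the situation of (ii) the vertex $w$ is a $4^+$-vertex.

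For (1): by Proposition~\ref{segment} a negative $1$-segment is $u_0u_1$ with $u_0,u_1$ internal $3$-vertices; writing $u_0u_1z$ for the incident $3$-face, the argument of (ii) shows $z$ is not on $\partial f$. Let $w$ be the endpoint of $S$ joined to an endpoint of $u_0u_1$ by an edge of $\partial f$, say $u_1w\in E(G)$; this edge does not lie on a $3$-face since $u_1$ is type-I incident with $f$. If $w$ were a $3$-vertex then $u_0,u_1,w$ would contradict (ii), so $w$ is a $4^+$-vertex, hence not type-I-1/I-2 incident with $f$ by (i), hence $\ch(w\to f)\ge1$. If $\ch(S)<0$ then $S$ is negative with $w$ as an endpoint, and Proposition~\ref{segment} would force $w$ to be a $3$-vertex or a $4$-vertex type-I-1/I-2 incident with $f$, both excluded; so $\ch(S)\ge0$. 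For the refinement, let $S=ww'$, so $\ch(S)=\ch(w\to f)+\ch(w'\to f)-1$ with $\ch(w\to f)\ge1$. If $w'$ is not a $3$-vertex then $\ch(w'\to f)\ge1/2$; if $w'$ is a $3$-vertex, then $w$ is not a $4$-vertex (else the path $u_0u_1ww'$ of degrees $3,3,4,3$ contradicts Lemma~\ref{33-path} with $k=4$), and if $w$ is a $5$-vertex then, reading the type-I-3 definition along the subpath with $v=w$, $v_1=w'$ (a $3$-vertex, on the $3$-face side) and $v_2=u_1$, $v_3=u_0$ (both $3$-vertices), $w$ is type-I-3 incident with $f$, so $\ch(w\to f)=3/2$; in every case $\ch(S)\ge1/2$.

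For (2)--(4): in each part the vertex adjacent to the $3$-face-bearing edge ($v$ in (2); $w$ in (4), when $vw$ lies on a $3$-face) is type-I incident with $f$ and has a $3$-vertex $f$-cycle-neighbour, so by (i) it sends at least $1$ unless it is a $3$-vertex, and the $3$-vertex alternative is excluded by hypothesis ($\deg(v)\ge4$ in (2)) or by (ii) (in (4)); these subcases are then immediate. The residual situations are: in (2), $v$ a $4$-vertex type-I-1/I-2 incident with $f$ (so it sends $1/2$ and, by the definition, its partner $w$ is a $5^+$-vertex); in (3), $v$ a $4$-vertex and $w$ a $5^+$-vertex as in the hypothesis; in (4), $w$ a $4^+$-vertex and $x$ a $5^+$-vertex both type-0 incident with $f$ (if both are $4$-vertices we are in the stated exceptional case). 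Here one shows, first, that the $4$-vertex $v$ sends $1/2$ rather than $0$: if it lies on a $3$-face at all, that face is forced to be the one on its two edges not shared with $\partial f$, and then $v$ is type-I-1 incident with the $6^+$-face on its other side, the ``far'' vertex of that definition being its $3$-vertex $\partial f$-neighbour, so R4(0) gives $1/2$. Second, one shows the $5$-vertex partner ($w$ in (2)--(3), $x$ in (4)) is not type-0-3 incident with $f$: a $5$-vertex type-0-3 incident with $f$ has two $3$-vertex neighbours each lying in an all-$3$-vertex triangle, which, combined with the $4$-vertex $f$-cycle-neighbour supplied by the configuration and a suitable $3$-vertex two steps away, contradicts Lemma~\ref{5-vertex43} (in the type-I-2 variant of (2) one argues more directly: the $5$-vertex's neighbour on the other face is a $3$-vertex with a $4$-vertex neighbour and so cannot lie in an all-$3$-vertex triangle). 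Lemmas~\ref{5-vertex3} and~\ref{5-vertex} dispose of the remaining variants, and combining the two halves yields $\ch(v\to f)+\ch(w\to f)\ge1/2+1/2=1$ (resp.\ the analogous inequality in (4)).

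The main obstacle is bookkeeping rather than a single deep idea. The labels type-I-1, type-I-2, type-I-3, type-I-4, type-0-1, type-0-2, type-0-3 are defined through the drawing in neighbouring faces, so each residual subcase demands that one carefully read off what the pertinent label forces about the local picture — which neighbours are $3$-vertices, which incident faces are $3$-faces or hexagons of a prescribed shape — and then match that picture against the hypotheses of exactly one reducibility lemma of Section~3. The delicate points are identifying the correct auxiliary path for an application of Lemma~\ref{5-vertex43} (or~\ref{34-path}, or~\ref{33-path}) and verifying the non-edge side-conditions those lemmas impose; but no individual step is hard.
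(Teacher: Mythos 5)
Your part (1) is correct and essentially the paper's argument (Proposition~\ref{segment} plus Lemma~\ref{33-path} for $\ch(S)\ge 0$, and the degree-$3,3,4,3$ path / type-I-3 observation for the extra $1/2$), and your residual analyses for (2) and (3) also match the paper: the $4$-vertex contributes $1/2$ by R4(I) resp.\ R4(0) via type-I-1 incidence with the neighbouring $6^+$-face, and the $5$-vertex partner is shown not type-0-3 incident via Lemma~\ref{5-vertex43}, with the direct argument in the type-I-2 subcase of (2). One caveat: the opening sentence of your (2)--(4) paragraph misapplies your fact (i) to (2) --- nothing guarantees $v$ has a $3$-vertex neighbour on the boundary of $f$ there (indeed the residual case you then treat is exactly the one where both boundary neighbours are $4^+$/$5^+$) --- but since the residual case you list for (2) is the right one, this is only a presentational slip. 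Likewise, in (4) the edge $vw$ can never lie on a $3$-face (a second $3$-face at the $3$-vertex $v$ would force a $4$-cycle); the relevant split is whether $wx$ does.

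Part (4) is where your sketch has genuine gaps. First, your case list silently assumes $x$ is a $4^+$-vertex: you only allow ``$x$ a $5^+$-vertex'' or ``both $w,x$ $4$-vertices''. If $w$ is a $4$-vertex and $x$ were a $3$-vertex, then $x$ sends nothing, $w$ sends at most $1/2$, and the exceptional conclusion fails, so this configuration must be excluded; the paper does so by Lemma~\ref{33-path} applied to the path $uvwx$ with degree pattern $3,3,4,3$ (you use precisely this trick in (1) but never invoke it here). Second, when $wx$ is not on a $3$-face but $x$ is type-I incident with $f$, your residual case (``both type-0 incident'') does not cover it and fact (i) cannot, since $x$ need not have a $3$-vertex neighbour on $f$; the needed observation is that $x$ cannot be type-I-1/I-2 incident with $f$ because its boundary neighbour $w$ is only a $4$-vertex, whence $\ch(x\to f)\ge 1$ when $x$ is a $4$-vertex (and trivially when $x$ is a $5^+$-vertex). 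Third, the subcase of a $5^+$-vertex $w$ type-0 incident with $f$ (then $w$ is type-0-2 incident, as $v$ and $u$ are $3$-vertices, and sends $1$) is not addressed by your two-halves recipe. Fourth, in your two-halves argument for (4), the claim that the $4$-vertex $w$ sends $1/2$ needs, when $w$ lies on a $3$-face, that $w$'s neighbour on that $3$-face be a $4^+$-vertex (type-I-1 requires it); in (3) this is a hypothesis, but in (4) it must be proved, again via Lemma~\ref{33-path} ($w$ has no $3$-vertex neighbour besides $v$) --- this fact is also what justifies the paper's reduction of this subcase of (4) to part (3). Finally, your closing appeal to Lemmas~\ref{5-vertex3} and~\ref{5-vertex} is spurious: they play no role in this proposition, and that clause papers over exactly the missing cases above.
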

\begin{proof}
Let us prove the claims separately.
\begin{itemize}
\item[(1)] Let $S=u_0\ldots u_t$, and let $S'=v_0v_1$ be a negative $1$-segment adjacent to
$S$; say $v_1u_0$ is an edge of the cycle bounding $f$.  By Proposition~\ref{segment},
both $v_0$ and $v_1$ are $3$-vertices.  Note that $u_0$ is not adjacent to $v_0$,
since all triangles in $G$ bound faces and $\deg(v_1)>2$.  By Lemma~\ref{33-path},
$u_0$ is a $4^+$-vertex.  Since $v_1$ is a $3$-vertex, $u_0$ is neither type-I-1 nor type-I-2
incident with $f$, and thus $\ch(S)\ge 0$ by Proposition~\ref{segment}.

Suppose now that $t=1$.  If $\ch(u_0\to f)\ge 3/2$, then $\ch(S)\ge \ch(u_0\to f)-1=1/2$.
Hence, we can assume that $\ch(u_0\to f)<3/2$, and thus by R6(I), $u_0$ is not a $6^+$-vertex.
Since $u_0$ is neither type-I-1 nor type-I-2 incident with $f$, R4(I) and R5(I) imply
$\ch(u_0\to f)=1$.
If $u_0$ is a $5$-vertex, this by R5(I) implies that $u_0$ is not type-I-3 incident with $f$, and
thus $u_1$ is a $4^+$-vertex.  If $u_0$ is a $4$-vertex, then by Lemma~\ref{33-path},
we again conclude that $u_1$ is a $4^+$-vertex.  In either case, R4(I), R5(I), and R6(I)
imply $\ch(u_1\to f)\ge 1/2$, and thus $\ch(S)=\ch(u_0\to f)+\ch(u_1\to f)-1\ge 1/2$.

\item[(2)] By R4(I), R5(I), and R6(I), we have $\ch(v\to f)\ge 1$, unless $v$ is a $4$-vertex type-I-1 or type-I-2
incident with $f$.  If $v$ is type-I-1 or type-I-2 incident with $f$, then $\ch(v\to f)=1/2$ and $w$ is a $5^+$-vertex.
Note that in this case $w$ is not type-0-3 incident with $f$ (this is clear if $v$ is type-I-2 incident with $f$,
and follows by Lemma~\ref{5-vertex43} if $v$ is type-I-1 incident with $f$).
Hence, $\ch(w\to f)\ge 1/2$ by R5(0) and R6(0), and $\ch(v\to f)+\ch(w\to f)\geq 1$.

\item[(3)] If $w$ is a $6^+$-vertex, then $\ch(w\to f)=1$ by R6(0).  Hence, assume that $w$ is a $5$-vertex.
By Lemma~\ref{5-vertex43}, $w$ is not type-0-3 incident with $f$, and thus $\ch(w\to f)\ge 1/2$ by R5(0).
If $xy\in E(G)$, then consider the face $g$ whose boundary contains the path $wvx$, and observe that
$v$ is type-I-1 incident with $g$.  If $xy\not\in E(G)$, then $v$ is not incident with any $3$-faces.
In either case, $\ch(v\to f)=1/2$ by R4(0), and thus $\ch(v\to f)+\ch(w\to f)\ge 1$.

\item[(4)] By Lemma~\ref{33-path}, $w$ is a $4^+$-vertex.  If $w$ is a $5$-vertex, then it is either type-I or type-0-2
incident with $f$.  Hence, if $w$ is a $5^+$-vertex, then $\ch(w\to f)\ge 1$ by R5(I), R5(0), and R6.  Consequently,
we can assume that $w$ is a $4$-vertex.  If $wx$ is incident with a $3$-face, then note that $w$ is neither
type-I-1 nor type-I-2 incident with $f$, and $\ch(w\to f)=1$ by R4(I).  Hence, assume that $wx$ is not incident with a $3$-face.
By Lemma~\ref{33-path}, $x$ is a $4^+$-vertex.
If $x$ is a $5^+$-vertex, then note that $w$ has no $3$-vertex neighbors other than $v$ by Lemma~\ref{33-path},
and thus $\ch(w\to f)+\ch(x\to f)\geq 1$ by (3).
If $x$ is a $4$-vertex type-I incident with $f$, then note that $x$ is neither type-I-1 nor type-I-2 incident with $f$,
and thus $\ch(x\to f)=1$ by R4(I).  Therefore, either $\sum_{z\in T} \ch(z\to f)\ge 1$ or both $w$ and $x$ are $4$-vertices
type-0 incident with $f$.
\end{itemize}
\end{proof}

\begin{figure}[!htb]
\centering
{\includegraphics[height=0.35\textwidth]{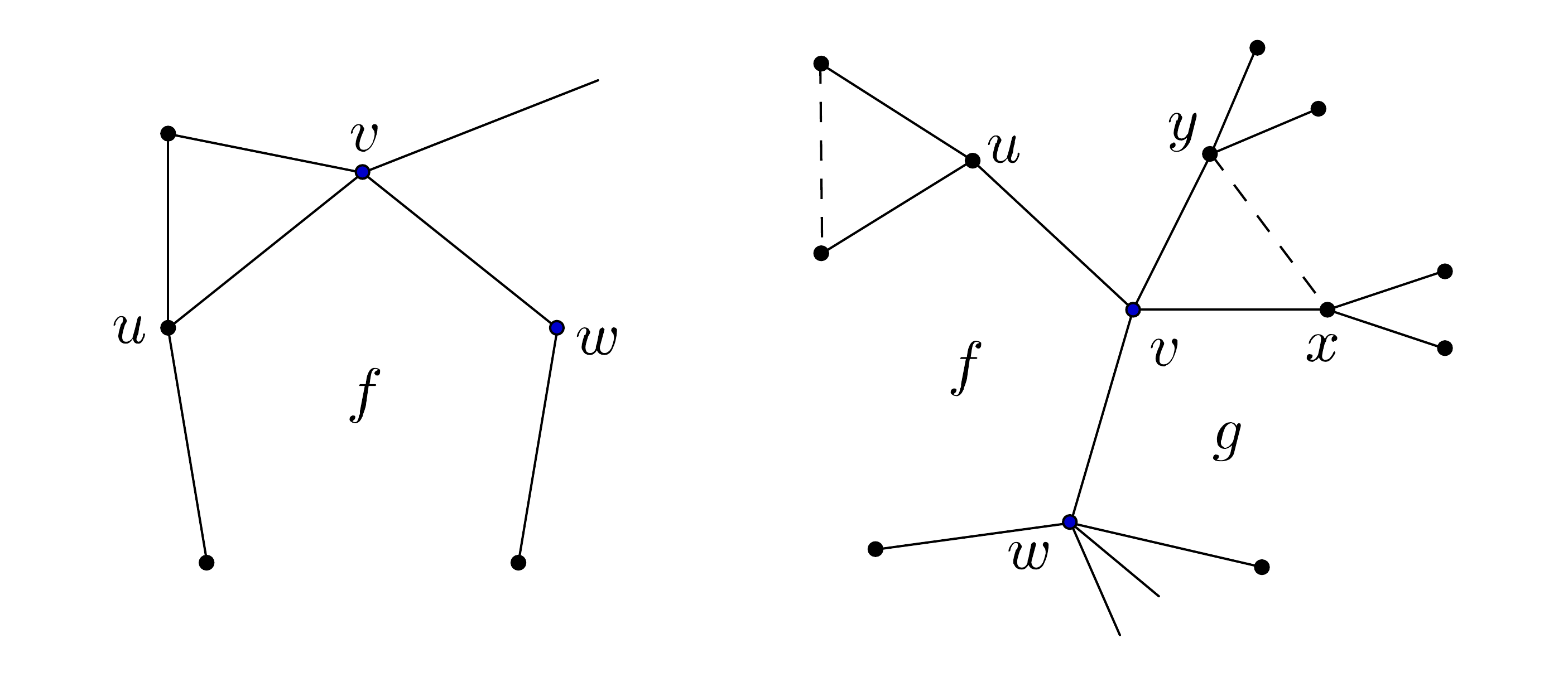}}

(a)  \hspace{3.0cm}  (b) \hspace{1.0cm}
\caption{Configurations from Proposition~\ref{charge}.}\label{fig-charge}
\end{figure}

\begin{lemma}\label{6face}
Let $(G,L,Z)$ be a minimal counterexample and let $f$ be a face of $G$.
If $f$ has length $6$, then $\ch(f)\ge 0$.
\end{lemma}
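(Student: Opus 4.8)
The only way $f$ can lose charge is via rule Rt, sending $1$ to each incident $3$-face; it gains charge from incident $4^+$-vertices and from $5^+$-vertices via R4--R6. The natural bookkeeping device is the segment: every edge of $f$ incident with a $3$-face lies in a unique segment $S=u_0\ldots u_t$, and $\ch(S)$ already accounts for the charge $f$ receives from the vertices of $S$ minus what it pays (by Rt) for the $3$-faces along $S$. So I would write $\ch(f)\ge \sum_{S} \ch(S) + \sum_{v} \ch(v\to f)$, where the first sum is over segments of $f$ and the second is over $4^+$-vertices of $f$ that are type-0 incident with $f$ (these lie outside every segment). By Propositions~\ref{segment} and \ref{charge}, every segment has $\ch(S)\ge -1$, and the only negative segments are the ones described in Proposition~\ref{segment}; every type-0 incident $4^+$-vertex contributes at least $0$ (and at least $1/2$ unless it is a $4$-vertex type-0-3 or similar). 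So the whole burden is to show the negative contributions cannot overwhelm $f$.

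First I would dispose of the easy cases by a counting argument on the boundary $6$-cycle $u_0u_1u_2u_3u_4u_5$. Since $G$ has no $4$- or $5$-cycles, no two of the (at most three) $3$-faces incident with $f$ can share more than... actually consecutive edges of $f$ can both bear $3$-faces, so $f$ can be incident with up to three $3$-faces, and the segments partition a set of consecutive-or-not boundary edges. The key structural point: a negative $1$-segment has both endpoints $3$-vertices, a negative $2^+$-segment with $\ch(S)=-1/2$ has one $3$-vertex end and one $4$-vertex (type-I-1/I-2) end, and any longer negative segment still has $\ch(S)\ge -1$. Since $f$ has only $6$ vertices, the total length of all segments plus the number of type-0 vertices is $6$. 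I would enumerate the possible segment structures: e.g. one $5$-segment (leaving one type-0 vertex), a $4$-segment plus..., two segments summing to length $\le 5$, etc. For each, bound $\sum \ch(S)$ from below using Proposition~\ref{segment} (the ``furthermore'' clause forces the interior vertices of short segments to be $4$-vertices, hence type-II sending exactly $1$, so those segments have $\ch(S)\ge -1$ with equality only in the all-$3$-vertex-endpoints case) and use Proposition~\ref{charge}(1) to upgrade: a segment adjacent to a negative $1$-segment has $\ch(S)\ge 0$, so two negative segments cannot be adjacent, which on a $6$-cycle is very restrictive.

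The main obstacle I expect is the case of two negative segments, particularly two negative $1$-segments (contributing $-1$ each, total $-2$, which would be fatal) or a negative $1$-segment together with a negative longer segment. By Proposition~\ref{charge}(1) these cannot be adjacent, so on a $6$-cycle two negative $1$-segments $u_0u_1$ and $u_3u_4$ must be ``antipodal'', with single type-0 vertices $u_2$ and $u_5$ between them; but then $u_2$ sits between two $3$-faces (on edges $u_1u_2$ and... no—$u_2u_3$ is inside the second segment, $u_1u_2$ is between segments, so $u_2$ is type-0). Here I would invoke Lemma~\ref{33-path}: the endpoints being $3$-vertices forces their $f$-neighbors to be $4^+$-vertices, and in fact one needs to trace that $u_2$ and $u_5$ must be $5^+$-vertices (since each is adjacent along $f$ to two $3$-vertices that are themselves near $3$-faces), invoking Proposition~\ref{charge}(4) or the type-0-2 analysis. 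Then $\ch(u_2\to f)+\ch(u_5\to f)\ge 2$ recovers the deficit. I would similarly handle a negative $1$-segment plus a negative $2$- or $3$-segment (lengths $1+2+$type-0's $=6$ forces few type-0 vertices, Proposition~\ref{charge}(1) again forbids adjacency), and the lone-negative-segment cases where a single $-1$ must be offset by the $\ge 1$ total from the remaining boundary, which Proposition~\ref{charge}(2)--(4) is exactly engineered to provide. The bottom line is that Proposition~\ref{charge} was set up precisely to make every ``bad'' configuration on a $6$-face locally pay for itself, so the proof is a finite case check over the segment patterns on a $6$-cycle.
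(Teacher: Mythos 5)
Your skeleton is the paper's: write $\ch(f)\ge\sum_S\ch(S)+\sum_v\ch(v\to f)$ over segments and type-0 incident $4^+$-vertices, and do a case analysis on the negative-segment patterns using Propositions~\ref{segment} and~\ref{charge} together with the structural lemmas. The two-antipodal-$1$-segments case you work out is fine. But as a proof the proposal has a genuine gap: the cases you wave off as "a finite case check" that Proposition~\ref{charge} is "exactly engineered" to close are precisely the ones where Propositions~\ref{segment} and~\ref{charge} do not suffice. Concretely: (1) take a negative segment $S_1=v_1v_2v_3v_4$ with $\ch(S_1)=-1/2$, so $v_4$ is a $4$-vertex type-I-1 or type-I-2 incident with $f$ and $v_5$ is a $5^+$-vertex; if $v_5$ is type-I incident with $f$ and $v_6$ is a $3$-vertex, then $S_2=v_5v_6$ has $\ch(S_2)=\ch(v_5\to f)-1$, and none of \ref{charge}(2)--(4) applies ($v_5$ is not type-0, and there is no pair of adjacent $3$-vertices on a $3$-face edge). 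The paper closes this by showing via Lemma~\ref{33-path} that $v_4$ must be type-I-2 incident with $f$, and then via Lemma~\ref{4-vertex} that $v_5$ is a $6^+$-vertex or a $5$-vertex type-I-4 incident with $f$, hence sends $3/2$ by R5(I); nothing in your sketch produces this extra $1/2$. (2) For a lone $-1$ segment $v_1v_2v_3$ with $v_4,v_5,v_6$ all type-0 incident, the subcase where $v_4$ and $v_5$ are $4$-vertices needs Lemma~\ref{34-path}, Proposition~\ref{charge}(3), Lemmas~\ref{6-cycle} and~\ref{5-vertex3} for $v_6$, and crucially the second clause of R4(0): one must verify that $v_5$ is type-I-2 incident with the face across the edge $v_5v_6$ so that it still sends $1/2$ to $f$. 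Citing \ref{charge}(2)--(4) alone does not deliver the needed $+1$ here.

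A smaller but real inaccuracy: "two negative segments cannot be adjacent" does not follow from Proposition~\ref{charge}(1), which only concerns negative $1$-segments; excluding, say, two adjacent negative $2$-segments requires combining Proposition~\ref{segment} (the degree and type-I-1/I-2 constraints on segment ends) with Lemma~\ref{33-path}. So while your strategy coincides with the paper's, the decisive case analysis --- where Lemmas~\ref{34-path}, \ref{4-vertex}, \ref{5-vertex3}, \ref{5-vertex43} and the finer incidence types I-2, I-4 and 0-1/0-2/0-3 enter --- is missing, and as written the argument does not establish $\ch(f)\ge 0$ for all $6$-faces.
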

\begin{proof}
Note that $\ch_0(f)=0$.
Let $v_1\ldots v_6$ be the cycle bounding $f$.  If all faces that share edges with $f$ are $3$-faces,
then $v_1$, \ldots, $v_6$ are $4^+$-vertices type-II incident with $f$, and thus
$\ch(f)\ge 6\times 1-6\times 1=0$ by R4(II), R5(II), R6(II), and Rt.
Hence, we can assume that $f$ shares an edge with a $6^+$-face.  Hence, each edge which $f$ shares with
a $3$-face is contained in a segment.  Let $S_1$, \ldots, $S_k$ be segments of $f$.
By Rt, we have $\ch(f)\ge \sum_{i=1}^k \ch(S_i)$, and thus we can assume that say $S_1$ is a negative segment.
We can label the vertices so that $S_1=v_1\ldots v_m$ for some $m\ge 2$.

Let us first consider the case that $\ch(S)\ge -1/2$ for every segment $S$ of $f$.
By Proposition~\ref{segment}, we have $\ch(S_1)=-1/2$, $m\ge 3$, and we can assume that $v_1$
is a $3$-vertex and $v_m$ is a $4$-vertex type-I-1 or type-I-2 incident with $f$.  Consequently,
$v_{m+1}$ is a $5^+$-vertex (and in particular $m\le 5$).  If $v_{m+1}$ is type-0 incident
with $f$, then $f$ cannot have a negative segment other than $S_1$ (as it would be a $1$-segment,
which cannot have charge at least $-1/2$ by Proposition~\ref{segment}).  Furthermore,
$\ch(v_m\to f)+\ch(v_{m+1}\to f)\ge 1$ by Proposition~\ref{charge}(2), and thus
$\ch(f)\ge (\ch(S_1)-\ch(v_m\to f))+(\ch(v_m\to f)+\ch(v_{m+1} \to f))\ge -1+1=0$.
Hence, we can assume that $v_{m+1}$ is type-I incident with $f$ and starts a segment $S_2=v_{m+1}\ldots v_i$ for some $i\in \{5,6\}$.
We have $\ch(v_{m+1}\to f)\ge 1$ by R5(I) and R6(I), and if $v_i$ is a $4^+$-vertex, then $\ch(v_i\to f)\ge 1/2$ by R4(I), R5(I), and R6(I), and thus
$\ch(S_2)\ge 1/2$, and $\ch(f)\ge \ch(S_1)+\ch(S_2)\ge 0$.  Hence, suppose that $v_i$ is a $3$-vertex.
Note that $m\le 4$, and thus $v_2$, \ldots, $v_{m-1}$ are $4$-vertices by Proposition~\ref{segment}.
If $m=3$, then by Lemmas~\ref{33-path} and \ref{34-path}, we conclude that
$i=5$ and $v_6$ is a $5^+$-vertex, not type-0-3 incident with $f$ by Lemma~\ref{5-vertex3}.
Hence, $\ch(f)=\ch(S_1)+\ch(S_2)+\ch(v_6\to f)\ge -1/2+0+1/2=0$ by R5(0) and R6(0).
Therefore, we can assume that $m=4$, and thus $i=6$.  By Lemma~\ref{33-path},
$v_4$ is not type-I-1 incident with $f$, and thus it is type-I-2 incident; i.e.,
$f$ shares the edge $v_4v_5$ with a $6$-face bounded by a cycle $v_4v_5w_1yx_1x$, where the edge $w_1y$ is incident
with a $3$-face bounded by cycle $w_1yy_2$, $x_1$ and $w_1$ are $3$-vertices and $y$ is a $4$-vertex.  Note that
$y_2$ is a $4^+$-vertex by Lemma~\ref{4-vertex}.  Consequently, $v_5$ is either a $6^+$-vertex, or a $5$-vertex
type-I-4 incident with $f$, and $\ch(v_5\to f)=3/2$ by R5(I) and R6(I).  Consequently, $\ch(S_2)=1/2$
and $\ch(f)=\ch(S_1)+\ch(S_2)=0$.

Let us now consider the case that $f$ is incident with a segment $S$ with $\ch(S)<-1/2$, say $S=S_1$.
By Proposition~\ref{segment}, $\ch(S_1)=-1$ and both $v_1$ and $v_m$ are $3$-vertices.
Since $m\le 6$, Proposition~\ref{segment} also implies that $v_2$, \ldots, $v_{m-1}$ are $4$-vertices.
By Lemma~\ref{6-cycle}, we have $m\le 5$.  If $m=5$, then by Lemmas~\ref{33-path} and
\ref{6-cycle}, $v_6$ is a $5^+$-vertex.  Note that if $v_6$ is a $5$-vertex, then it is
type-0-1 incident with $f$.  Hence, $\ch(f)=\ch(S_1)+\ch(v_6\to f)=-1+1=0$ by R5(0) and R6(0).
Let us distinguish cases depending on whether $m$ is $2$, $3$, or $4$.

\smallskip

\textbf{Case $m=4$:} By Lemmas~\ref{33-path} and \ref{6-cycle}, we can assume that $v_5$ is a $4^+$-vertex
and $v_6$ is a $5^+$-vertex.  If $v_5v_6$ is incident with a $3$-face (and thus $v_5v_6$ is a segment $S_2$),
then note that $v_5$ is not type-I-1 or type-I-2 incident with $f$, and thus $\ch(v_i\to f)\ge 1$
for $i\in\{5,6\}$ by R4(I), R5(I), and R6(I), $\ch(S_2)\ge 1$, and $\ch(f)=\ch(S_1)+\ch(S_2)\ge -1+1=0$.
Hence, suppose that $v_5$ and $v_6$ are type-0 incident with $f$.  Note that neither $v_5$ nor $v_6$ is
type-0-3 incident with $f$ by Lemma~\ref{5-vertex3}, and thus $\ch(v_5\to f)+\ch(v_6\to f)\ge 1$ by R5(0) and R6(0)
unless $v_5$ is a $4$-vertex.  If $v_5$ is a $4$-vertex, then note that $v_4$ is the only $3$-vertex neighbor of
$v_5$ by Lemma~\ref{34-path}, and thus $\ch(v_5\to f)+\ch(v_6\to f)\ge 1$ by Proposition~\ref{charge}(3).
Hence, $\ch(f)=\ch(S_1)+\ch(v_5\to f)+\ch(v_6\to f)\ge -1+1=0$.

\smallskip

\textbf{Case $m=3$:}  By Lemma~\ref{33-path}, $v_4$ and $v_6$ are $4^+$-vertices.
If $v_5$ is a $3$-vertex, then $v_4$ and $v_6$ are $5^+$-vertices
by Lemma~\ref{33-path} and by symmetry we can assume that $v_4$ is type-0 incident with $f$.  Then
$S_1$ is the only negative segment of $f$ ($v_5v_6$ could be a $1$-segment, but by Proposition~\ref{charge}(1),
a negative $1$-segment cannot be adjacent to $S_1$) and if $v_4$ is a $5$-vertex, then it is type-0-1
incident with $f$.  Hence, $\ch(f)\ge \ch(S_1)+\ch(v_4\to f)\ge -1+1=0$ by R5(0) and R6(0).  Consequently, we can assume
that $v_5$ is also a $4^+$-vertex.

Suppose that $f$ is incident with a segment $S_2\neq S_1$.  If $\ch(S_2)\ge 1$, then $\ch(f)\ge \ch(S_1)+\ch(S_2)\ge -1+1=0$.
Hence, we can assume that $\ch(S_2)<1$.  Note that neither $v_4$ nor $v_6$ is type-I-1 or type-I-2 incident with $f$,
and thus by R4(I), R5(I), and R6(I), this is only possible if $v_5$ is an end of $S_2$ (say $S_2=v_5v_6)$, $v_5$ is a $4$-vertex
and $v_5$ is type-I-1 or type-I-2 incident with $f$.  Then $\ch(S_2)=1/2$ and $v_4$ is a $5^+$-vertex.  By Lemma~\ref{5-vertex3},
$v_4$ is not type-0-3 incident with $f$, and thus $\ch(v_4\to f)\ge 1/2$ by R5(0) and R6(0).  Hence,
$\ch(f)=\ch(S_1)+\ch(S_2)+\ch(v_4\to f)\ge -1+1/2+1/2=0$.

Hence, we can assume that $S_1$ is the only segment of $f$.  If $\ch(v_4\to f)\ge 1/2$ and $\ch(v_6\to f)\ge 1/2$,
then $\ch(f)\ge \ch(S_1)+\ch(v_4\to f)+\ch(v_6\to f)\ge -1+2\times 1/2=0$.  Hence, by symmetry we can assume that $\ch(v_4\to f)<1/2$.
By Lemma~\ref{5-vertex3}, $v_4$ is not type-0-3 incident with $f$, and thus by R4(0), R5(0), and R6(0), we conclude that
$v_4$ is a $4$-vertex.  By Lemma~\ref{34-path}, $v_3$ is the only $3$-vertex neighbor of $v_4$.  If
$v_5$ is a $5^+$-vertex, then $\ch(v_4\to f)+\ch(v_5\to f)\ge 1$ by Proposition~\ref{charge}(3), and
thus $\ch(f)\ge \ch(S_1)+\ch(v_4\to f)+\ch(v_5\to f)\ge -1+1=0$.  Hence, we can assume that $v_5$ is a $4$-vertex,
and thus $v_6$ is a $5^+$-vertex by Lemma~\ref{6-cycle}.  By Lemma~\ref{5-vertex3}, $v_6$ is not type-0-3 incident with $f$,
and thus $\ch(v_6\to f)\ge 1/2$ by R5(0) and R6(0).
Let $f'$ denote the face with that $f$ shares the edge $v_5v_6$, and let $z\neq v_6$ be the neighbor of $v_5$ in the boundary
cycle of $f'$.  By Lemma~\ref{34-path}, $z$ is a $4^+$-vertex, and thus if $v_5$ is incident with a $3$-face, then
$v_5$ is type-I-2 incident with $f'$.  Consequently, $\ch(v_5\to f)=1/2$ by R4(0),
and $\ch(f)\ge \ch(S_1)+\ch(v_5\to f)+\ch(v_6\to f)\ge -1+2\times 1/2=0$.

\smallskip

\textbf{Case $m=2$:}
If $k=3$, then the segments of $f$ are $S_1$, $S_2=v_3v_4$ and $S_3=v_5v_6$.
By Proposition~\ref{charge}(1), $\ch(S_2),\ch(S_3)\ge 1/2$, and thus
$\ch(f)=\ch(S_1)+\ch(S_2)+\ch(S_3)=0$.  Hence, we can assume that $k\le 2$.
If $v_3$ is contained in a segment, then let $T_3=\{v_3\}$, otherwise let $T_3=\{v_3,v_4\}$.
If $v_6$ is contained in a segment, then let $T_6=\{v_6\}$, otherwise let $T_6=\{v_5,v_6\}$.
For $i\in\{3,6\}$, let $\gamma_i=\sum_{x\in T_i} \ch(x\to f)$.
By Lemma~\ref{6-cycle}, $v_3$, \ldots, $v_6$ cannot all be $4$-vertices, and thus
by Proposition~\ref{charge}(4), we have $\max(\gamma_3,\gamma_6)\ge 1$.
If $k=1$, then $\ch(f)=\ch(S_1)+\gamma_3+\gamma_6\ge -1+0+1=0$.
Hence, we can assume that $k=2$.  Let $S_2\neq S_1$ be the other segment of $f$, with ends $x$ and $y$,
and let $\beta=\ch(S_2)-\ch(x\to f)-\ch(y \to f)$.  Observe that $\beta\ge -1$, and
$\ch(f)\ge \ch(S_1)+\beta+\gamma_3+\gamma_6$.  If $\gamma_3,\gamma_6\ge 1$, we have $\ch(f)\ge 0$.
Hence, by symmetry we can assume that $\gamma_3<1$, and by Proposition~\ref{charge}(4),
$v_3$ and $v_4$ are $4$-vertices type-0 incident with $f$, and thus $S_2=v_5v_6$.
By Lemma~\ref{33-path}, $v_5$ and $v_6$ are $4^+$-vertices, and clearly neither of them is type-I-1 or type-I-2
incident with $f$.  By R4(I), R5(I), and R6(I), we have $\ch(v_i\to f)\ge 1$ for $i\in \{5,6\}$,
and thus $\ch(S_2)\ge 1$.  Consequently, $\ch(f)\ge \ch(S_1)+\ch(S_2)\ge 0$.
\end{proof}

\begin{lemma}\label{7face}
Let $(G,L,Z)$ be a minimal counterexample and let $f$ be a face of $G$.
If $f$ has length at least $7$, then $\ch(f)\ge 0$.
\end{lemma}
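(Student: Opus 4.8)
The plan is to reuse the segment‑based bookkeeping developed for Lemma~\ref{6face}, exploiting the fact that now the initial charge $\ch_0(f)=|f|-6\ge 1$ is positive. First, if every face sharing an edge with $f$ is a $3$‑face, then every vertex of the cycle bounding $f$ is type-II incident with $f$. Since $G$ has no $4$‑cycle, such a vertex cannot have degree $3$ (a degree‑$3$ type-II vertex, together with its three neighbours, would contain a $4$‑cycle), so it sends at least $1$ to $f$ by R4(II), R5(II), R6(II); and the $3$‑faces incident with the $|f|$ edges of $f$ are pairwise distinct, again because $G$ has no $4$‑cycle. Hence $\ch(f)\ge(|f|-6)+|f|-|f|=|f|-6\ge 0$.

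Otherwise $f$ shares an edge with a $6^+$‑face, so every edge of $f$ incident with a $3$‑face lies in a uniquely determined segment; let $S_1,\dots,S_k$ be the segments of $f$. Partition the cycle bounding $f$ into $k$ arcs $A_1,\dots,A_k$, where $A_i$ consists of $S_i$ together with the maximal block of type‑$0$ vertices following it. Writing $\sigma_i=\ch(S_i)+\sum_{v}\ch(v\to f)$, where $v$ ranges over the type‑$0$ vertices of $A_i$, and using that the number of Rt transfers out of $f$ equals the number of edges of $f$ incident with $3$‑faces, which is $\sum_i|E(S_i)|$, one gets
\[
\ch(f)=(|f|-6)+\sum_{i=1}^{k}\sigma_i .
\]
Every type‑$0$ vertex sends a non‑negative amount to $f$, so $\sigma_i\ge\ch(S_i)\ge -1$ by Proposition~\ref{segment}; consequently $\ch(f)\ge(|f|-6)-k$, and we are done as soon as $k\le|f|-6$. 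Since each arc accounts for at least two edges of $f$ (at least one inside the segment, plus the edge leaving it), we have $2k\le|f|$; in particular $k=1$ always suffices, and so does every $|f|\ge 11$. The remaining cases are $|f|\in\{7,8,9,10\}$ with $k\ge 2$ (so $2\le k\le 5$), for which one must sharpen $\sum_i\sigma_i\ge -k$ to $\sum_i\sigma_i\ge 6-|f|$.

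For this I would argue, exactly in the spirit of the $m\in\{2,3,4\}$ analysis of Lemma~\ref{6face}, that a negative arc is always compensated by positive surplus on itself or on its successor. If $\ch(S_i)=-1/2$, the end $u_t$ of $S_i$ is a $4$‑vertex type-I-1 or type-I-2, so its successor $w$ is a $5^+$‑vertex; if $w$ is type‑$0$ then Proposition~\ref{charge}(2) and (for type-I-1) Lemma~\ref{5-vertex43} force $\ch(w\to f)\ge 1/2$, hence $\sigma_i\ge 0$, a contradiction; thus $w$ begins $S_{i+1}$, and being a $5^+$‑vertex type-I it gives $\ch(S_{i+1})\ge 0$, so the pair $(A_i,A_{i+1})$ has total score $\ge -1/2$. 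If $\ch(S_i)=-1$, then both ends of $S_i$ are $3$‑vertices with only $4$‑vertices in between (Proposition~\ref{segment}); applying Proposition~\ref{charge}(1),(3),(4) and Lemmas~\ref{33-path}, \ref{34-path}, \ref{6-cycle}, \ref{tria3}, \ref{4-vertex}, \ref{5-vertex3}, \ref{5-vertex43} to the vertices following a $3$‑vertex end, one shows that those vertices either already deliver charge $1$ to $f$ (so $\sigma_i$ together with them is non‑negative), or are themselves $4$‑vertices that are type‑$0$ incident with $f$ and whose neighbourhoods are then so constrained by the absence of $4$‑ and $5$‑cycles that the next segment cannot again be a $-1$ segment and the following arc carries a positive surplus. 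A short case check over the at most five arcs then yields $\sum_i\sigma_i\ge 6-|f|$, hence $\ch(f)\ge 0$.

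The main obstacle is precisely this last refinement: excluding long ``chains'' of $-1$ segments joined only by $4$‑vertices that are type‑$0$ incident with $f$ and contribute nothing to it. This is the same phenomenon that makes the proof of Lemma~\ref{6face} lengthy, and it again rests on a careful local inspection of the boundary walk through the structural results of Section~3. The reason the argument is nevertheless shorter here is the slack $|f|-6\ge 1$: it lets the coarse estimate settle the cases $k=1$ and $|f|\ge 11$ outright, leaving only the four face lengths $7,8,9,10$, each with a bounded number of segments, to be handled by hand.
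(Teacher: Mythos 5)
Your proposal has a genuine gap: the cases $|f|\in\{7,8,9,10\}$ with several segments, which are the real content of the lemma, are not actually proved. Your coarse estimate only bounds the total number of segments ($2k\le|f|$) and uses $\sigma_i\ge-1$ for every arc, which disposes of $k=1$ and $|f|\ge 11$ but leaves, e.g., a $10$-face with five arcs each potentially of score $-1$, a $9$-face with four such arcs, etc. You then assert that ``a negative arc is always compensated by positive surplus on itself or on its successor'' and defer to ``a short case check over the at most five arcs,'' but this compensation claim is exactly the hard point and is not established for $\ch(S_i)=-1$ segments: the ends of such a segment are $3$-vertices, the vertices following them are only known to be $4^+$-vertices (Lemma~\ref{33-path}), and whether they actually deliver charge to $f$ (rather than being $4$-vertices type-0 incident with $f$ that send nothing) requires precisely the kind of detailed local analysis that occupies the $m\in\{2,3,4\}$ cases of Lemma~\ref{6face}; sketching it by listing lemma numbers does not carry it out, and nothing in your argument rules out two $-1$ segments packed closely on a $7$-face, which is the one configuration that genuinely needs work.

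The paper avoids all of this with a sharper counting that you are missing: only \emph{negative} segments matter, and each negative segment is charged with (``owns'') at least three boundary edges --- its own edges, the edge following it, and, for a negative $1$-segment, also the preceding edge --- with ownership injective by Proposition~\ref{segment} and Lemma~\ref{33-path} (the neighbours along $f$ of the $3$-vertex ends of a negative segment are $4^+$-vertices, hence cannot be ends of another negative segment unless type-I-1/I-2, which is excluded). This gives $n\le\lfloor m/3\rfloor$ negative segments, hence $\ch(f)\ge m-6-\lfloor m/3\rfloor\ge 0$ for all $m\ge 8$, and only the case $m=7$ with two negative segments, one of charge $-1$, remains; there the paper uses Proposition~\ref{segment} to pin down the positions of the two segments and Lemma~\ref{34-path} to show the remaining boundary vertex $v_7$ is a $5^+$-vertex sending $1$ to $f$. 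To repair your proof you would either need to reproduce this ownership count (making your arc bound depend on negative segments only) or genuinely perform the case analysis for $|f|=7,\dots,10$, which your current text does not do.
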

\begin{proof}
Let $C=v_1\ldots v_m$ be the cycle bounding $f$.  If all faces that share edges with $f$ are $3$-faces,
then $v_1$, \ldots, $v_m$ are $4^+$-vertices type-II incident with $f$, and thus
$\ch(f)\ge \ch_0(f)+m\times 1-m\times 1\ge 0$ by R4(II), R5(II), R6(II), and Rt.
Hence, we can assume that $f$ shares an edge with a $6^+$-face.  Hence, each edge which $f$ shares with
a $3$-face is contained in a segment.  Let $S_1$, \ldots, $S_k$ be the segments of $f$, and let $n$ denote the number
of them which are negative.

For a negative segment $S=v_iv_{i+1}\ldots v_s$,
we say that $S$ \emph{owns} the edges of the path $S$, the edge $v_sv_{s+1}$, and if $S$ is a $1$-segment,
then also the edge $v_{i-1}v_i$ (with all indices taken cyclically modulo $m$).  Note that by Proposition~\ref{segment}
and Lemma~\ref{33-path}, each edge of $C$ is owned by at most one negative segment, and each negative segment
owns at least three edges.  Consequently, $n\le \lfloor m/3\rfloor$,
and
$$\ch(f)\ge \ch(f_0)+\sum_{i=1}^k \ch(S_i)\ge \ch(f_0)-n\ge m-6-\lfloor m/3\rfloor.$$
It follows that if $m\ge 8$, then $\ch(f)\ge 0$.

Hence, suppose that $|f|=7$, and thus $n\le 2$.  If $f$ has at most one negative segment, or two negative segments of
charge at least $-1/2$, then $\ch(f)\ge \ch(f_0)-1=0$.  Hence, we can assume that $f$ has two negative segments $S_1$ and $S_2$
and $\ch(S_1)<-1/2$.  By Proposition~\ref{segment}, $\ch(S_1)=-1$, and we can assume $S_1=v_1\ldots v_s$ for some $s\le 5$,
$v_1$ and $v_s$ are $3$-vertices, and $v_2$, \ldots, $v_{s-1}$ are $4$-vertices.  By Lemma~\ref{33-path}, $v_{s+1}$ and $v_7$
are $4^+$-vertices; furthermore, they are clearly neither type-I-1 nor type-I-2 incident with $f$.
Since $S_2$ is negative, Proposition~\ref{segment} implies that $v_{s+1},v_7\not\in V(S_2)$, and thus $s\le 3$.
If $\ch(S_2)>-1$, then by Proposition~\ref{segment} we conclude that $s=2$ and $S_2$ is the $2$-segment $v_4v_5v_6$,
and one end of $S_2$ is a $3$-vertex; otherwise, Proposition~\ref{segment} implies that both ends of $S_2$ are $3$-vertices.
By symmetry, we can assume that $v_6$ is a $3$-vertex belonging to $S_2$.  By Lemma~\ref{34-path}, $v_7$ is a $5^+$-vertex.
Note that if $v_7$ is a $5$-vertex, then it is type-0-1 incident with $f$, and thus $\ch(v_7\to f)=1$ by R5(0) and R6(0).
Hence, $\ch(f)\ge \ch_0(f)+\ch(S_1)+\ch(S_2)+\ch(v_7\to f)\ge 1-2\times 1+1=0$.
\end{proof}

\begin{lemma}
\label{face}
Let $(G,L,Z)$ be a minimal counterexample.  Then every face $f$ of $G$ satisfies $\ch(f)\ge 0$.
\end{lemma}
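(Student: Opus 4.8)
The plan is to split by face length. By Lemma~\ref{conn}, $G$ is $2$-connected, so every face of $G$ is bounded by a cycle; since $G$ contains no $4$- or $5$-cycles, every face is therefore either a $3$-face or a $6^+$-face. The $6^+$-faces require no new work: a $6^+$-face of length exactly $6$ is handled by Lemma~\ref{6face} and a $6^+$-face of length at least $7$ by Lemma~\ref{7face}, so in both cases $\ch(f)\ge 0$ already.

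The only remaining case is a $3$-face $f$, for which $\ch_0(f)=|f|-6=-3$. The key point is that no edge of $f$ is shared with another $3$-face: if two triangles shared an edge $xy$, then the two vertices opposite $xy$ together with $x$ and $y$ would form a $4$-cycle, contradicting the hypothesis on $G$. Hence each of the three edges of $f$ is shared with a $6^+$-face, and by rule Rt each of these faces sends $1$ to $f$. Since $f$ is a $3$-face, none of the rules R4, R5, R6 involve $f$, so $f$ neither sends nor receives any other charge; thus $\ch(f)=-3+3\times 1=0$.

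I do not expect any obstacle here: the substantive discharging arguments have all been carried out in Lemmas~\ref{6face} and \ref{7face}, and the $3$-face case is immediate from the absence of $4$-cycles. The only detail to verify is that the rules never remove charge from a $3$-face, which is clear since Rt is the only rule touching $3$-faces and it only adds charge.
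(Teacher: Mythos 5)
Your proposal is correct and follows essentially the same route as the paper: reduce $6^+$-faces to Lemmas~\ref{6face} and \ref{7face}, and observe that a $3$-face, by the absence of $4$-cycles (and $5$-cycles), borders only $6^+$-faces, so it receives $3\times 1$ by Rt and ends with charge $0$. The extra details you supply (why two triangles cannot share an edge, why no other rule touches a $3$-face) are exactly what the paper leaves implicit.
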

\begin{proof}
If $f$ is a $6^+$-face, this follows from Lemmas~\ref{6face} and \ref{7face}.
If $f$ is a $3$-face, then note that $f$ only shares edges with $6^+$-faces by the
absence of $4$- and 5-cycles, and thus $\ch(f)=\ch_0(f)+3\times 1=0$ by Rt.
\end{proof}

\section{$(11:3)$-colorability of planar graphs}

We are now ready to prove our main result.

\begin{proof}[Proof of Theorem~\ref{MTl}]
Suppose for a contradiction that there exists a plane graph $G_0$ without $4$- or $5$-cycles
and an assignment $L_0$ of lists of size 11 to vertices of $G_0$ such that
$G_0$ is not $(L_0:3)$-colorable.  Let $z$ be any vertex of $G_0$, let $L_0'(z)$ be any $3$-element
subset of $L_0(z)$, and let $L'_0(v)=L_0(v)$ for all $v\in V(G)\setminus\{z\}$.
Then $G_0$ is not $(L'_0:3)$-colorable, and thus $(G_0, L_0,\{z\})$ is a counterexample.

Therefore, there exists a minimal counterexample $(G,L,Z)$.
Let $\ch$ be the assignment of charges to vertices and faces of $G$ obtained
from the initial charge $\ch_0$ as described in Section~\ref{sec-discharge}.
By (\ref{eq:sum}), the fact that the total amount of charge does not change
by its redistribution, and Lemmas~\ref{vertex} and \ref{face}, we have
$$-12=\sum_{v\in V(G)}\ch_0(v)+\sum_{f\in F(G)} \ch_0(f)=\sum_{v\in V(G)}\ch(v)+\sum_{f\in F(G)} \ch(f)\ge \sum_{z\in Z} (\deg(z)-6).$$
Since $|Z|\le 3$ and $\deg(z)\ge 2$ for all $z\in Z$ by Lemma~\ref{conn},
we conclude that $|Z|=3$ and all vertices of $Z$ have degree two.  But since $G$
is connected and $G[Z]$ is a triangle, this implies that $V(G)=Z$, and thus $G$ is $(L:3)$-colorable.
This is a contradiction.
\end{proof}

\section*{Acknowledgments}

The research leading to these results has received funding from the European
Research Council under the European Union's Seventh Framework Programme
(FP/2007-2013)/ERC Grant Agreement n.616787. Xiaolan Hu is partially supported
by NSFC under grant number 11601176 and NSF of Hubei Province under grant
number 2016CFB146.

\bibliographystyle{siam}
\bibliography{../data}

\end{document}